\newcommand{\x}{\mathbf x}
\newcommand{\y}{\mathbf y}
\newtheorem{lemma}{\bf Lemma}
\newtheorem{proposition}{\bf Proposition}
\newtheorem{theorem}{\bf Theorem}
\newtheorem{corollary}{\bf Corollary}
\renewenvironment{proof}{\noindent {\bf Proof: }}{\rm\\}
\theoremstyle{definition}
\newtheorem{remark}{Remark}{\rm}
\newtheorem{example}{Example}{\rm}
\renewcommand{\p@algorithm}{\arabic{algorithm}\expandafter\@gobble}
\newcounter{step}[algorithm]
\newcommand\STEP[2][\(\triangleright\)]{%
	\refstepcounter{step}
	\vskip 0.25\baselineskip
	\item[]\hskip -\algorithmicindent #1 \textbf{Step \arabic{step}}%
	\ifthenelse{\equal{\unexpanded{#2}}{}}{}{ (\texttt{#2})}%
	\textbf{.}%
}
\def\algo#1\end{%
	\noindent\fbox{%
	\begin{minipage}[b]{\dimexpr\columnwidth-\algorithmicindent\relax}
	\begin{algorithmic}
	#1
	\end{algorithmic}
	\end{minipage}
	}%
\end}
\def\diag{\mathop{ \mbox{diag}}\limits}
\newcommand{\vertiii}[1]{{\left\vert\kern-0.25ex\left\vert\kern-0.25ex\left\vert #1 
    \right\vert\kern-0.25ex\right\vert\kern-0.25ex\right\vert}}
\DeclarePairedDelimiterX{\normi}[1]
  {|\!|\!|}
  {|\!|\!|}
  {\ifblank{#1}{\:\cdot\:}{#1}}
\begin{document}

\title[Kreiss system norm]{Minimizing transients via the Kreiss system  norm}
\author{Pierre APKARIAN$^1$}
\author{Dominikus NOLL$^{2}$}
\thanks{$^1$ONERA, Department of System Dynamics, Toulouse, France}
\thanks{$^2$Institut de Math\'ematiques, Universit\'e de Toulouse, France}
%¨\date{}

\begin{abstract}
We introduce system norms 
which assess transient behavior of stable Linear Time-Invariant (LTI) systems. 
This allows us to address undesired responses to initial conditions, finite resource consumption signals, or persistent perturbations.  
We then consider the challenging problem  of minimizing these norms in closed loop
using structured linear feedback. The computed controllers mitigate transients in a linearized
closed loop, with the potential side effect of enlarging the region of stability of the underlying 
non-linear controlled system. In applications this helps to
prevent transition to undesired nonlinear regimes, limit cycles or chaotic behavior. 
The success of our approach is certified a posteriori using Lyapunov-like techniques and simulations, 
as we demonstrate through a variety of applications.\\[0.5cm]

\noindent
{\sc Key Words.}  Transient mitigation,  $L_1$ disturbances, Kreiss constant, structured controllers, 
non-smooth optimization, multi-objective optimization, suppression of attractors, LMI design techniques.

\end{abstract} 

\maketitle

\section{Introduction}
It has been observed in the literature that the size of the region of attraction of 
a locally stable nonlinear system
\begin{equation}
\label{nl}
\dot{x} = Ax + \phi(x), \;\; x(0) = x_0\,,
\end{equation}
with $\phi : \mathbb{R}^n \to \mathbb{R}^n$ is a static, memoryless nonlinearity with $\phi(0)=0$, $\phi'(0)=0$, 
may strongly depend on the degree of normality of $A$.  When $A$ is far from normal, the linearization
$\dot{x}=Ax$, $x(0)=x_0$,
may have large transient peaks,  which may incite
trajectories of (\ref{nl}) to leave the region of attraction. This is known as {\it peaking}, 
\cite{sussmann1991peaking,francis1978bounded,lin2022co,Taira2017AIAA_Overview,Taira2019AIAA_Applications},
and considered a major obstacle to global stability. 

The tendency of a stable  $A$ to produce large transients or peaking 
may be assessed by its
worst-case transient growth
\begin{equation}
\label{M0}
M_0(A)= \max_{t\geq 0}
\max_{\|x_0\|_2=1} \|e^{At}x_0\|_2 = \max_{t\geq 0} \overline{\sigma}(e^{At}),
\end{equation}
and in closed loop, when $A$ depends on tunable parameters, one may minimize $M_0(A_{\rm cl})$
in order to enlarge the region of local stability of (\ref{nl}). This has been studied in \cite{an_kreiss} for structured controllers, and previously in \cite{hinrichsen} using the controller Q-parametrization.

In a continuous operating process the effect of initial values is not the appropriate  
lever, as instability is caused rather by noise, persistent perturbations, or finite-consumption disturbances. 
Moreover,
nonlinearity often arises only in some
of the states $z$,  and likewise may  affect only parts of the dynamics,   and  
we address those issues by considering as a refined version of (\ref{nl}) a nonlinear controlled system
of the form
\begin{align}
\label{NL}
\begin{split}
\dot{x} &= Ax + B\phi(z)+ Bw  + B_uu\\
z &= Cx \\
y &= C_yx
\end{split}
\end{align}
with  $x\in \mathbb R^n$, $u\in \mathbb R^m$, $y\in \mathbb R^p$, $w\in \mathbb R^{m_w}$, $z\in \mathbb R^{p_z}$,
where the non-linearity $\phi:\mathbb R^{p_z} \to \mathbb R^{m_w}$
satisfies $B\phi(0)=0$ and $B\phi'(0)C=0$, and where a tunable feedback controller $u = K(\x) y$, with $\x$ as decision variables, is sought 
which stabilizes
the system locally, rendering it as resilient  as possible with regard to these disturbances.
The latter is aimed at indirectly by tuning the
closed loop channel $w \to z$ to remain small with regard to a system norm assessing transients, the idea being
that disturbances $w$ cause the partial state $z$ to have unduly large transients. 

Closing the loop with respect to the controller $K(\x)$ in (\ref{NL}), we consider the linear closed-loop channel
$T_{wz}(\x,s) = C(sI- A_{\rm cl}(\x))^{-1} B$, which we now tune in such a way that transients in $z(t)$ due to disturbances
$w(t)$ remain small. 
Expanding on (\ref{M0}), we assess transients of $G(s)=C(sI-A)^{-1}B$ via 
\begin{equation}
    \label{M}
\mathcal M_0(G) = \sup_{t\geq 0}  \overline{\sigma}\left( Ce^{At}B\right) = \sup_{\|w\|_1\leq 1} \| G \ast w\|_\infty,
\end{equation}
a time-domain $L^1 \to L^\infty$ induced system norm,  
which measures the time-domain peak of the response $z=G \ast w$ 
to a finite consumption input $w$. For $G(s)=(sI-A)^{-1}$ we recover $\mathcal M_0(G)=M_0(A)$. 

The principal goal of this contribution  is to develop a closed-loop controller design technique,
which mitigates transients of (\ref{NL}) via the indicated heuristic, performs fast and reliably, and
at the same time can be combined with standard design specifications in robust control. In addition, this
should be achieved with simple and practically useful
controller structures commonly used in engineering designs.

In order to achieve this goal,
we rely on frequency domain techniques, which
leads us to
introduce the  Kreiss system norm $\mathcal K(G)$ as a frequency domain approximation to (\ref{M}),  
the definition being given in
Section \ref{sect-Kreiss}. The fact that $\mathcal K(G)$ is frequency-based  offers algorithmic advantages
for optimization 
and
combines favorably with classical frequency-domain specifications, such as stability margins, noise and disturbance attenuation, loop shaping constraints,
allowing realistic and practically relevant design settings. A challenge is that this leads to multi-objective optimization
programs with non-smooth criteria and constraints.

Along with disturbances of finite consumption,  $w\in L^1$, it also makes sense to consider finite energy perturbations $w\in L^2$, 
which may be thought of as representing
noise, or time-domain bounded $w \in L^\infty$, which stand for persistent perturbations, as naturally all those could
be the reason why trajectories of (\ref{nl}) or (\ref{NL})  get outside the region of attraction. 
Ability of a system to withstand destabilizing disturbances is referred to as {\it resilience}, and 
along with $\mathcal M_0(G)$ or $\mathcal K(G)$ other ways to quantify  it have been discussed, see e.g.
\cite{krakov}. 
Resilience of systems is currently a subject of broad interest  and addressed in various ways, see e.g.
\cite{ghanbari24,demmer23,bouvier23,boerner21}.

In parallel with (\ref{M}), where peaking is quantified in the time-domain $L^\infty$-norm in response to finite consumption inputs,  
dissipative system theory assesses transient responses to initial values in the energy norm
\cite{Packard2003b,khong_24,khong25,Veenmann2013}.
While related, there is no direct link between these concepts. Yet it is worth mentioning that dissipativity analysis based on quadratic storage functions may be cast as 
integral quadratic constraints (IQCs), see \cite{seiler_2015}. In analysis,
those lead to linear matrix inequalities (LMIs),  but in synthesis turn into
bilinear matrix inequalities (BMIs), which are non-convex and often cumbersome to solve due to the large number of optimization variables.
This is why alternative more successful
ways to address IQCs have been proposed, see \cite{gahinet20,simoes20,an_iqc,mixedApkarianNoll}.

There are cases where applying dissipativity theory to a nonlinear system with a static nonlinearity can be turned into LMI synthesis conditions. This occurs in the application of section 7, where the structure of the nonlinearity admits a simple characterization via quadratic constraints. In this setting, no multipliers are required and the synthesis conditions can be reduced to LMIs in the same vein as in \cite{kalur2021nonlinear,mushtaq2022feedback}. These conditions may be conservative and should therefore be evaluated in the context of each specific application. It should be emphasized that this approach does not provide a means to prescribe or restrict the structure of the controller.

Mitigating large  transients is a general concern in control design, and has been addressed e.g. in
\cite{astolfi22,whidborne2007minimization,MQMcW2011,ray21}. 
LMI approaches are discussed in \cite{boyd1994linear, whidborne2005minimization}, and  a 
comparison between minimization of (\ref{M})  and LMI techniques is \cite{quenon2021control}, suggesting that, in the case of plane Poiseuille flow, minimization (\ref{M}) may be less conservative.

The remainder of this article is organized as follows. Section \ref{sect-Kreiss} introduces 
the Kreiss system norm $\mathcal K(G)$ as a frequency domain approximation
of $\mathcal M_0(G)$, followed by Section \ref{sect_optim}, which presents the central
Kreiss optimization program.
Section \ref{sect-Norm}  gives norm estimates related to $L_1$-disturbances. Section \ref{sect_kreiss} derives 
the system norm estimate $\mathcal K(G) \leq \mathcal M_0(G)$ from Young's inequality.
In Section \ref{new_subs} we investigate attainment of the lower bound $\overline{\sigma}(CB) \leq \mathcal K(G)\leq \mathcal M_0(G)$. This is important in view of the quest whether
lack of normality of the system $A$-matrix continues to be the cause of unduly
large transients when the set-up is (\ref{NL}) and no longer (\ref{nl}).
Section \ref{sect-Persistent} addresses the case of persistent perturbations, again using Young's inequality. Experiments
in Sections \ref{sect-LCA} and \ref{sect-ChaosFP} focus on $L_1$-disturbances, where we apply the Kreiss norm
minimization  of Section \ref{sect-Kreiss}  to control nonlinear dynamics involving limit cycles, chaos or multiple fixed points, with the goal to mitigate transients and thereby increase the region of local stability or to even achieve global stability in closed loop.
Conclusions are given in Section \ref{sect-Conclusion}.

\section*{Notation} Notation is standard. Time-domain $L^p$-spaces are equipped with classical signal norms as in \cite{induced_norms}.  
Time and  Laplace variables are $t$ and $s$, and Re$(.)$ denotes the real part,  
$\ast$ is convolution. For matrices $M$ symbols $M^T$, $M^H$, $M^{-1}$, ${\rm Tr}(M)$ mean transpose, conjugate transpose, inverse and trace, 
$I_n$ stands for the identity matrix of size $n$. We use ${\rm diag}(A_1,A_2)$ to denote 
 a block-diagonal matrix with blocks $A_1$ and $A_2$. For Hermitian matrices, $M \succ N$ means $M-N$
 is positive definite, $M \succeq N$ means $M-N$ is positive semi-definite. Maximum singular values and maximum eigenvalues are denoted
 $\overline{\sigma}$ and $\overline{\lambda}$.  The matrix exponential is $e^ A$. The $H_\infty$-norm of a transfer function $G(s)$
 is denoted $\| G\|_\infty$ or
 $\|G(s)\|_\infty$. The Clarke directional derivative of a locally Lipschitz function $f$ is $f'(x,d)$,
 the Clarke sub-differential is $\partial f(x)$; \cite{clarke1990optimization}.
The adjoint of  a linear operator $T$ is $T^*$. 
Additional specific notations are introduced within the text. 

\section{Kreiss system norm \label{sect-Kreiss}}
As observed  in \cite{leveque}, it may be difficult to compute $M_0(A)$ and  $\mathcal M_0(G)$ fast and accurately enough for 
the purpose of optimization. 
In response, the authors of  \cite{leveque} propose to use the  {\it Kreiss constant} $K(A)$ of a matrix $A \in \mathbb R^{n\times n}$
as an alternative measure of normality. The latter is defined as
\begin{equation}
\label{kreiss}
K(A) = \max_{{\rm Re}(s) > 0} {\rm Re}(s) \overline{\sigma}\left((sI-A)^{-1}\right),
\end{equation}
and its computation was investigated in \cite{mitchell1,mitchell2,an_kreiss}. 
By the famous Kreiss Matrix Theorem \cite[p. 151, p.183]{trefethen_embree} the estimate
\begin{equation}
    \label{estimate}
K(A) \leq M_0(A) \leq en K(A)
\end{equation}
is satisfied with $e=2.7183..$ the Euler number and $n$ the matrix size, 
where the right hand estimate is generally pessimistic, but sharp as shown in \cite{leveque}.

In view of (\ref{estimate}) minimizing $K(A_{\rm cl})$ has an  effect similar to minimizing $M_0(A_{\rm cl})$, and this is in line with the observation
that the global minimum $K(A)=M_0(A)=1$  is the same for both criteria and occurs for normal $A$, 
and more generally, for matrices $A$ where $e^{At}$ is a
contraction in the spectral norm. In \cite{an_kreiss} we have shown that optimizing $K(A_{\rm cl})$ is numerically possible, and that
it has indeed the desired effect of driving $A_{\rm cl}$ closer to normal behavior.
This has incited a vivid interest in Kreiss constant minimization, see e.g.
\cite{Krakovska2024Resilience,LeeMarcus2023JFM,ShcherbakovDabbene2022EJC,GarciaHilares2023PhD,Lee2024Thesis,DudarenkoEtAl2023IA}.

Expanding to (\ref{NL}) requires
computation and
optimization of $\mathcal M_0(G)$, which in closed loop encounters similar difficulties. We therefore
introduce the Kreiss system norm
$$
\mathcal K(G) := \sup_{{\rm Re}(s) > 0} {\rm Re}(s) \overline{\sigma} \left( C (sI-A)^{-1} B\right),
$$
which generalizes $K(A)$ in a natural way and satisfies the same estimate
\begin{equation}
\mathcal K(G) \leq \mathcal M_0(G) \leq en\, \mathcal K(G),
\end{equation}
as we shall prove in Section \ref{sect_kreiss}.  The principled reason to use $\mathcal K(G)$
is that its computation, and for that matter, optimization, may be based on a robust control technique, first proposed in  \cite[Thm. 2.1]{an_kreiss}
for the case $B=C=I_n$:
\begin{lemma}
\label{theorem1}
Suppose $A$ is stable. Then
the Kreiss system norm $\mathcal K(G)$ can be computed through the robust $H_\infty$-performance analysis program
\begin{equation}
\label{parametric}
\mathcal K(G) = \max_{\delta \in [-1,1]} \left\| C\left(sI-\left(\textstyle\frac{1-\delta}{1+\delta} A-I\right)\right)^{-1}B\right\|_\infty,
\end{equation}
where $\|G\|_\infty$ denotes the $H_\infty$-system norm.
\hfill $\square$
\end{lemma}

The Kreiss  norm can be computed either by solving a nonsmooth max-max program, or 
by a convex Semi-Definite Program  (SDP);  see \cite[Theorems 2.1 and 2.4]{an_kreiss}  and the discussion given there. The SDP provides a certified accuracy
and accounts for the worst case complexity, 
but the nonsmooth technique is considerably faster. In numerical testing, we 
therefore use the SDP only for the final certification. 
 
\section{Kreiss norm minimization}
\label{sect_optim}
This leads us now to the following synthesis program:
\begin{eqnarray}
\label{kreiss_program}
\begin{array}{ll}
\mbox{minimize} & \mathcal K(T_{wz}(\x)) \\
\mbox{subject to} 
& K(\x) \mbox{ stabilizing}\\
&\x\in \mathbb R^n
\end{array}
\end{eqnarray}
where $\x\in \mathbb R^n$ are the finitely many tunable parameters of the structured controller $K(\x)$,
and where $T_{wz}(\x,s)$ is the linear closed-loop channel of (\ref{NL}), by which we assess transients.
In practice
program (\ref{kreiss_program})  will be complemented by adding standard $H_\infty$- or $H_2$-loop-shaping requirements as constraints
to further improve performances and robustness, as for instance explained in
\cite{ApkarianNoll2021_OptBasedControl,AN2015,apkarianNoll2017worst}. Examples are 
(\ref{eq-synth1}) and (\ref{eqsynth}) in Sections 
\ref{sect-LCA} and \ref{sect-ChaosFP}.

Program (\ref{kreiss_program}) is a special case of a much wider class of problems with parametric uncertainty
discussed in \cite{AN2015,apkarianNoll2017worst}. 
Computation of $\mathcal K(T_{wz}({\bf x}))$ in closed loop
involves system
matrices of size $N:=n+n_K$, with $n$ the order of the plant, $n_K$ the controller state dimension, and is of the order $O(N^3)$ mainly
through Hamiltonian eigenvalue computations. The same complexity applies to computing transfer functions.
Clarke subgradients of criteria and constraints use 
\cite[Sect. IV, Prop. 1]{AN2015} and are of the order $O(p_z^3+m_w^3 + n_K^3)$, which gives some speedup of (\ref{NL})
over (\ref{nl}).
Some experiments are
documented in \cite{quenon2021control,an_kreiss}.

Further information in the assessment of (\ref{kreiss_program}) concerns
the number of iterations required by the optimizer.
Since design problems
are non-convex, we content ourselves with local minima, which is beneficial as finding global minima is NP-hard and
computationally infeasible for sizable problems. 
Non-smoothness
of criteria and constraints complicates matters, and in response
is addressed by tailored optimization techniques, 
\cite{apkarian2006nonsmooth,apkarian2006nonsmooth2}.
An advantage of our approach is that it
avoids the use of Lyapunov variables, so that the number of decision variables
is typically way smaller than the matrix dimension, dim$({\bf x}) \ll n+n_K$. This is
significant, 
because each optimization step calls for a convex quadratic program with size dim$(\x)$ and computational 
complexity
$O({\rm dim}(\x)^3)$. Altogether the optimizer
succeeds for medium size problems consistently under 100 iterations.

Comparisons
in \cite{an_kreiss} show that LMI-based methods fail
much earlier when the matrix dimension increases. Recent experiments in the literature as well as our own 
in Sections \ref{sect-LCA} and \ref{sect-ChaosFP} indicate that
$N=n+n_K$ is the dominant parameter.

\section*{Organization of the theoretical contribution}
In the following two sections we discuss theoretical aspects of disturbances causing
large transients via peaking. This includes:
\begin{description}
  \item[Section~\ref{sect-Norm}] 
  Finite consumption disturbances and 
   norm estimates relating time and frequency domain via Young's inequality:
  \begin{itemize}
    \item  Estimate for the Kreiss system norm (Section~\ref{sect_kreiss}),
    \item  Question of attainment of the lower bound of $\mathcal K(G)$ and the role of normality of the system $A$-matrix (Section~\ref{new_subs}).
    \item Hausdorff's numerical abscissa extended to systems
    (Section \ref{sect_abscissa}).
\end{itemize}

    \item[Section~\ref{sect_norms2}] 
    System norms other than $\mathcal M_0(G)$, $\mathcal K(G)$ which assess peaking:
    \begin{itemize}
    \item Alternative computable frequency based system norms (Sections \ref{sect_more}, \ref{sect_other}).
  \item
  Peak gain norm for 
   persistent perturbations and 
   estimate relating it to the  $H_\infty$-norm via Young's inequality. 
    \item  $L^2 \to L^2$ operator norm to address noise (Sections~\ref{sect_noise}).
  \end{itemize}
\end{description}

\section{Norm estimates \label{sect-Norm}}
In this Section, we obtain basic estimates relating the Kreiss system norm $\mathcal K(G)$ to
the  $L^1 \to L^\infty$ induced norm $\mathcal M_0(G)$.
We
recall Young's inequality:
\begin{lemma}
\label{lemma_young}
{\rm (Young's inequality; see \cite{best_young})}.
Let $1/p+1/q+1/r=2$, $p,q,r\geq 1$, and $1/p+1/p'=1$. Then
$$
\left|
\iint f(x) g(x-y) h(y) dy dx
\right| \leq C_{p}C_qC_{r} \|f\|_p\|g\|_q\|h\|_r,
$$
where
$$
C_p= \left( p^{1/p} \big/ p'^{1/p'}  \right)^{1/2}, C_1 = C_\infty = 1.
$$
\hfill $\square$
\end{lemma}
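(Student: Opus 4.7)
The plan is to deduce the inequality by a clever application of the generalized H\"older inequality to a symmetric splitting of the integrand, followed by Fubini. First I would observe that the hypothesis $1/p+1/q+1/r=2$ translates, for the H\"older conjugates, to $1/p'+1/q'+1/r'=1$, which is exactly what is needed to apply H\"older with three factors. The key decomposition is
$$f(x)\,g(x-y)\,h(y) = \bigl[f(x)^{p/r'}g(x-y)^{q/r'}\bigr]\cdot\bigl[g(x-y)^{q/p'}h(y)^{r/p'}\bigr]\cdot\bigl[f(x)^{p/q'}h(y)^{r/q'}\bigr],$$
and one verifies that the total exponent on each of $f$, $g$, $h$ is $1$; for $f$ this is $p/r'+p/q' = p(1/r'+1/q') = p\cdot(1/p) = 1$, using $1/r'+1/q'=1-1/p'=1/p$.

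Next I would apply H\"older on $\R^2$ with exponents $r', p', q'$, yielding
$$\iint |f g h|\,dy\,dx \leq \left(\iint f^p g^q\,dy\,dx\right)^{\!1/r'}\!\left(\iint g^q h^r\,dy\,dx\right)^{\!1/p'}\!\left(\iint f^p h^r\,dy\,dx\right)^{\!1/q'}.$$
Each of the three double integrals factors by Fubini after the substitution $u=x-y$; for instance $\iint f(x)^p g(x-y)^q\,dy\,dx=\|f\|_p^p\|g\|_q^q$. Collecting the powers, $\|f\|_p$ comes out with exponent $p/r'+p/q'=1$, and similarly for $\|g\|_q$ and $\|h\|_r$, giving Young's inequality with overall constant~$1$. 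This already suffices for most qualitative uses of the result.

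The nontrivial step is to replace this $1$ by the sharp constant $C_pC_qC_r$, which \emph{cannot} be obtained from a direct H\"older-type estimate. Here I would follow Beckner's strategy from \cite{best_young}: tensorize the trilinear form to $\R^{2n}$, exploit an orthogonal rotation by $\pi/4$ that interchanges the convolution structure with a product structure, and pass to the limit $n\to\infty$ in the spirit of the central limit theorem. This reduces the extremal problem to Gaussian inputs, for which the constants $C_p=(p^{1/p}/p'^{1/p'})^{1/2}$ emerge by a direct Gaussian computation, together with the observation that equality in the tensorized inequality is preserved under the rotation only for Gaussians.

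The main obstacle is precisely this reduction to Gaussian extremizers: it is the substance of \cite{best_young} and is considerably deeper than the H\"older argument above. For the applications in Section~\ref{sect-Norm} of the present paper, the non-sharp constant $1$ would in fact already suffice for qualitative bounds; the sharp constants are needed only if one wishes to obtain the tightest possible quantitative comparison between $\mathcal K(G)$ and $\mathcal M_0(G)$.
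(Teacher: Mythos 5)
The paper does not actually prove this lemma: it is quoted verbatim from Brascamp--Lieb \cite{best_young} (hence the closing $\square$ with no proof body), so there is no internal argument to compare against. Your proposal is nevertheless sound, and usefully layered. The three-factor H\"older decomposition is correct: the identity $1/p'+1/q'+1/r'=1$ follows from $1/p+1/q+1/r=2$, your exponent bookkeeping checks out ($p/r'+p/q'=p(1-1/p')=1$, and likewise for $g$ and $h$), and Fubini with the substitution $u=x-y$ factors each double integral, giving Young's inequality with constant $1$. You are also right that the sharp constants $C_pC_qC_r<1$ cannot come from any such pointwise H\"older splitting and require the tensorization/rotation/Gaussian-extremizer machinery of Beckner and Brascamp--Lieb; deferring that to the cited reference is exactly what the paper itself does.

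Two small caveats. First, your decomposition tacitly assumes $f,g,h\geq 0$ (harmless, pass to absolute values) and, more importantly, finite conjugate exponents: when one of $p',q',r'$ is $\infty$ the splitting degenerates and one must argue directly. Note that these degenerate triples are precisely the ones the paper uses --- $(p,q,r)=(1,\infty,1)$ in the proof of Theorem \ref{KreissLB} and $(p,q,r)=(\infty,1,1)$ in Section \ref{sect-Persistent} --- where the inequality reduces to the trivial bound $|\iint f(x)g(x-y)h(y)\,dy\,dx|\leq \|g\|_\infty\|f\|_1\|h\|_1$ (resp.\ its permutation), so the gap is cosmetic. Second, your closing observation is correct and worth emphasizing: in every application in the paper the triple satisfies $C_pC_qC_r=1$, so your elementary constant-$1$ argument already supports all the estimates actually derived from Lemma \ref{lemma_young}; the sharp constants are stated but never exploited.
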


Let $\xi,\eta$ be test vectors of appropriate dimensions and consider a one-dimensional signal $u(t)$, then Lemma \ref{lemma_young} gives
\begin{align}
\label{young}
\begin{split}
\xi^TC(sI-A)^{-1} B \eta\, u(s) &= \int_0^\infty e^{-st} (\xi^T Ce^{At}B \eta \ast u)(t) dt \\
&\leq C_{p}C_qC_{r} \|e^{-st}\|_p \|\xi^TCe^{At}B\eta\|_q \|u\|_r \\
&= C_{p}C_qC_{r} {\rm Re}(s)^{-1/p} p^{-1/p}   \|\xi^TCe^{At}B\eta\|_q \|u\|_r,
\end{split}
\end{align}
where $f(t)=e^{-st}$,  $g(t)=\xi^TCe^{At}B\eta$, and $h(t) = u(t)$ are understood to take values $0$ for $t < 0$. 
In the sequel we consider various choices of $p,q,r$.

\subsection{Kreiss system norm}
\label{sect_kreiss}
We apply Young's inequality with 
$r=1$, $q=\infty$, $p=1$, where $C_pC_qC_r=1$.
This leads to the following

\begin{theorem}\label{KreissLB}
For a stable system $G(s) = C(sI-A)^{-1}B$ we have the estimate
\begin{equation}
\mathcal K(G) := \sup_{{\rm Re}(s) > 0} {\rm Re}(s) \overline{\sigma} \left( C(sI-A)^{-1} B\right)
\leq \sup_{t\geq 0} \overline{\sigma} \left(  Ce^{At}B \right) =: \mathcal M_0(G). 
\end{equation}
\end{theorem}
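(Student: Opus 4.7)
The plan is to derive the inequality directly from the Laplace-transform representation of the resolvent, which is valid because $A$ is assumed stable. For ${\rm Re}(s) > 0$,
\[
C(sI-A)^{-1}B = \int_0^\infty e^{-st}\, Ce^{At}B\, dt,
\]
the integral converging absolutely since $\|e^{At}\|$ decays exponentially. This is of course exactly the specialization of Young's convolution inequality singled out in (\ref{young}) with $p=1$, $q=\infty$, $r=1$ (indeed $1/1+1/\infty+1/1=2$ and $C_1C_\infty C_1=1$), so the argument fits under the umbrella set up at the start of Section \ref{sect-Norm}.

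Next I would pass to the spectral norm by a triangle inequality inside the integral:
\[
\overline{\sigma}\!\left(C(sI-A)^{-1}B\right)
\;\le\; \int_0^\infty \bigl|e^{-st}\bigr|\, \overline{\sigma}\!\left(Ce^{At}B\right) dt
\;=\; \int_0^\infty e^{-{\rm Re}(s)\, t}\, \overline{\sigma}\!\left(Ce^{At}B\right) dt.
\]
Bounding the integrand by its supremum pulls $\mathcal M_0(G)$ out of the integral, and the remaining $\int_0^\infty e^{-{\rm Re}(s)t}\,dt = 1/{\rm Re}(s)$ is exactly the factor that gets cancelled by the ${\rm Re}(s)$ prefactor in the definition of $\mathcal K(G)$. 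Multiplying through by ${\rm Re}(s)$ and taking the supremum over the open right half-plane yields $\mathcal K(G) \le \mathcal M_0(G)$.

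There is no real obstacle in this argument; the only point that requires a word is the justification of the Laplace representation, for which stability of $A$ (hypothesis of the theorem) is exactly what is needed so that $\overline{\sigma}(Ce^{At}B)$ is integrable against $e^{-{\rm Re}(s)t}$ and, in particular, $\mathcal M_0(G)$ is finite. The alternative route via (\ref{young}), feeding in a test input $u$ and then letting $\|u\|_1\to 0$ around an approximate identity (or simply testing with unit vectors $\xi,\eta$ and dropping $u$), delivers the same bound but is slightly less direct.
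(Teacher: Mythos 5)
Your proof is correct, and while it rests on the same underlying $L^1\times L^\infty$ estimate as the paper's, the mechanics are genuinely more direct. The paper routes the argument through Young's inequality (Lemma \ref{lemma_young}), which is stated for scalar functions and therefore forces two extra moves: scalarization with test vectors $\xi,\eta$ so that $\xi^T Ce^{At}B\eta$ becomes a scalar kernel, and the introduction of an auxiliary input $u_\epsilon$ concentrating to the Dirac delta in order to strip the transform factor $u(s)$ off the convolution; only afterwards is the singular value recovered by maximizing over $\|\xi\|_2,\|\eta\|_2\leq 1$. You bypass both steps by applying the triangle inequality for (Bochner) integrals directly to the matrix-valued Laplace representation $C(sI-A)^{-1}B=\int_0^\infty e^{-st}\,Ce^{At}B\,dt$, pulling $\overline{\sigma}$ inside the integral and evaluating $\int_0^\infty e^{-\mathrm{Re}(s)t}\,dt=1/\mathrm{Re}(s)$ explicitly, which is exactly the factor cancelled by the prefactor in $\mathcal K(G)$; stability of $A$ justifies the representation and the finiteness of $\mathcal M_0(G)$, as you note. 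Your version is shorter and self-contained for this one theorem; what the paper's heavier scaffolding buys is reusability, since the template (\ref{young}) with other exponent choices $(p,q,r)$ is recycled later (e.g.\ $p=1$, $r=\infty$ variants in Section \ref{sect-Norm} and $p=\infty$, $q=r=1$ for the peak-gain estimate in Section \ref{sect-Persistent}), so the authors set up the general inequality once. One trivial slip in your closing aside: an approximate identity keeps $\|u_\epsilon\|_1=1$ with shrinking support (the paper takes $u_\epsilon=\epsilon^{-1}$ on $[0,\epsilon]$, so $\|u_\epsilon\|_1=1$ and $u_\epsilon(s)\to 1$), rather than $\|u\|_1\to 0$; this does not affect your main argument.
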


\begin{proof}
From (\ref{young}) with $r=1$, $q=\infty$, $p=1$, we get
$$
{\rm Re}(s) | \xi^TC (sI-A)^{-1} B\eta\, u(s)| \leq \|\xi^TC e^{At}B\eta \|_\infty \|u\|_1.
$$
Now take $u_\epsilon(t) = \epsilon^{-1}$ on $[0,\epsilon]$, $u_\epsilon(t) = 0$ else. Then $\|u_\epsilon\|_1=1$.
On the other hand, $u_\epsilon(s) \to 1$ as $\epsilon \to 0$, hence we get
$$
{\rm Re}(s) |\xi^TC (sI-A)^{-1} B\eta | \leq \|\xi^TCe^{At}B\eta\|_\infty =\sup_{t\geq 0} |\xi^T Ce^{At}B\eta |.
$$
Now we consider test vectors $\xi\in \ell_2$, $\eta\in \ell_2$.
Passing to the supremum over  $\|\xi\|_2\leq 1$, $\|\eta\|_2\leq 1$ on the right gives
\begin{align*}
{\rm Re}(s) |\xi^TC(sI-A)^{-1}B\eta| &\leq \sup_{t\geq 0} \sup_{\|\xi\|_2,\|\eta\|_2 \leq 1} | \xi^T Ce^{At} B\eta | \\
&= \sup_{t\geq 0} \overline{\sigma} \left( Ce^{At}B \right).
\end{align*}
Then taking the supremum over $\|\xi\|_2\leq 1$, $\|\eta\|_2\leq 1$ and ${\rm Re}(s) > 0$ on the left gives
$$
\mathcal K(G)= \sup_{{\rm Re}(s) > 0} {\rm Re}(s) \overline{\sigma} \left(  C(sI-A)^{-1}B\right) \leq\sup_{t\geq 0} \overline{\sigma} \left( Ce^{At}B \right)=\mathcal M_0(G),
$$
which is the claimed estimate.
\hfill $\square$
\end{proof}

In order to interpret the expression $\mathcal M_0(G)$ on the right, we 
consider vector norms on $L^p([0,\infty),\mathbb R^n)$ defined as
$$
\|u\|_{p,q} = \left(\int_0^\infty |u(t)|_q^p dt\right)^{1/p},
$$
where $|u|_q= \left(\sum_{i=1}^n |u_i|^q\right)^{1/q}$ is the standard vector $q$-norm in $\mathbb R^n$, and where
$\|u\|_{\infty,q} = \sup_{t\geq 0} |u(t)|_q$. Then, with the terminology introduced in \cite{induced_norms},
\begin{equation}
\label{induced_norms}
\|G\|_{(q,s),(p,r)} = 
\sup_{u\not=0} \frac{\|G\ast u\|_{q,s}}{\|u\|_{p,r}}
\end{equation}
are induced norms $G:(L^p,\|\cdot\|_{p,r}) \to (L^q,\|\cdot\|_{q,s})$.
In some  cases these admit closed-form expressions, which is a prerequisite to making them amenable to computations,
and even more so, optimization. By \cite[(25)]{induced_norms} one such case is
\begin{equation}
\label{many_others}
\|G\|_{(\infty,p),(1,r)} = \sup_{t\geq 0} \|G(t)\|_{p,r},
\end{equation}
where $\|A\|_{q,p} = \sup_{x\not=0} \|Ax\|_q/\|x\|_p$ are the usual well-known induced matrix norms.
Therefore, if we choose $p=r=2$ in (\ref{many_others}), then
\begin{equation*}
\|G\|_{(\infty,2),(1,2)} = \sup_{t\geq 0} \|G(t)\|_{2,2} = \sup_{t\geq 0} \overline{\sigma}(G(t)) = \mathcal M_0(G).
\end{equation*}
We have proved

\begin{proposition}
\label{prop1}
$\mathcal M_0(G)$ is an induced system norm. Given the vector input $w(t)$ satisfying 
$\int_0^\infty |w(t)|_2 dt = \int_0^\infty \left( \sum_{k=1}^p |w_k(t)|^2 \right)^{1/2} dt = 1$, it measures the output $z=G \ast w$ by the vector signal norm
$$\sup_{t\geq 0} \|z(t)\|_2
= \sup_{t\geq 0} \left( \sum_{i=1}^m \left| z_i(t) \right|^2\right)^{1/2}.$$
\hfill $\square$
\end{proposition}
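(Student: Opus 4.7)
The plan is to deduce the proposition as a specialization of the general induced-norm identity already recalled as equation (\ref{many_others}), with the choice $p=r=2$. The key observation is that once that identity is in hand, essentially nothing new has to be proved: the proposition is merely its readable restatement for the spectral/Euclidean pairing.

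First I would fix the matrix-norm identification. With $p=r=2$ the induced matrix norm on $\R^{m\times n}$ is, by definition, $\|A\|_{2,2}=\overline{\sigma}(A)$, the spectral norm. Substituting into (\ref{many_others}) gives
$$
\|G\|_{(\infty,2),(1,2)} \;=\; \sup_{t\geq 0} \|G(t)\|_{2,2} \;=\; \sup_{t\geq 0} \overline{\sigma}(G(t)) \;=\; \mathcal M_0(G),
$$
which establishes that $\mathcal M_0(G)$ coincides with a bona fide induced $(L^1,\|\cdot\|_{1,2}) \to (L^\infty,\|\cdot\|_{\infty,2})$ system norm.

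Second, to obtain the operational interpretation, I would read off the definition (\ref{induced_norms}) at $(q,s)=(\infty,2)$ and $(p,r)=(1,2)$:
$$
\mathcal M_0(G) \;=\; \sup_{w\neq 0} \frac{\|G\ast w\|_{\infty,2}}{\|w\|_{1,2}}.
$$
Restricting the supremum to inputs normalised by $\|w\|_{1,2}=\int_0^\infty |w(t)|_2\, dt=1$ and expanding $\|z\|_{\infty,2}=\sup_{t\geq 0}(\sum_i |z_i(t)|^2)^{1/2}$ gives exactly the claim stated in the proposition.

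The only real content sits inside the cited identity (\ref{many_others}) itself, which however we are allowed to invoke. Were we to reprove it, the upper bound would follow from the pointwise convolution estimate $|z(t)|_2 \leq \int_0^t \overline{\sigma}(G(t-\tau))\, |w(\tau)|_2 \, d\tau \leq \sup_{s\geq 0}\overline{\sigma}(G(s))\cdot \|w\|_{1,2}$, while sharpness is obtained by choosing $w$ as a narrow Dirac-like pulse centred at a time $t^\star$ near where $\overline{\sigma}(G(\cdot))$ attains its supremum, and directed along the right singular vector of $G(t^\star)$ realising $\overline{\sigma}(G(t^\star))$. This pulse construction is the only actual calculation, and is morally the same approximation argument already used in the proof of Theorem \ref{KreissLB}; hence no genuine obstacle remains.
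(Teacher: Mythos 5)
Your proposal is correct and follows essentially the same route as the paper, which likewise obtains the proposition by specializing the closed-form identity (\ref{many_others}) from \cite{induced_norms} to $p=r=2$ and identifying $\|\cdot\|_{2,2}$ with the spectral norm $\overline{\sigma}$. Your optional sketch of reproving (\ref{many_others}) is sound in substance, with only a loose phrase: the approximate Dirac pulse should be placed at time $0$ (or any $t_0$, by time invariance) and directed along the maximizing right singular vector of $G(t^\star)$, with the output then read off at elapsed time $t^\star$, rather than the pulse itself being ``centred at $t^\star$''.
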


The norm $\mathcal M_0(G) = \|G\|_{(\infty,2),(1,2)}$ will be called the worst case transient peak norm, as it
measures the peak of the time-domain response of $G$ to a signal with finite resource consumption.
Here 'response to a signal of finite resource consumption' is terminology adopted from \cite{boyd_barratt}.

In consequence,
the expression $\mathcal K(G)$ is a frequency domain  lower bound of $\mathcal M_0(G)$, and it is easy to see that $\mathcal K(G)$
is a norm, which we will call the {\em Kreiss system norm}. 

\begin{remark}
We do not expect $\mathcal K(G)$ to be an induced system norm, but
it does have the property of an operator norm, as follows from Theorem \ref{theorem1}. Indeed,
let $G_\delta = C(sI-(\frac{1-\delta}{1+\delta}A-I))^{-1}B$, then $\|G_\delta\|_\infty$ is the $L^2\to L^2$ induced
system norm when we take $\|\cdot\|_{2,2}$ as vector norm. Hence 
$\|z\|_{2,2} \leq \max_{\delta \in [0,1]} \|G_\delta\|_\infty \|w\|_{2,2}$, which due to (\ref{parametric})
gives $\|G \ast w\|_{2,2}\leq \mathcal K(G)\|w\|_{2,2}$.
\end{remark}

\begin{remark}
Suppose $G=(A,B,C)$ is output controllable. Then for $y_0\in {\rm im}(C)$, $y_0 \not=0$,  there exists $u_0$ and $t_0 > 0$ such that $Ce^{At_0}B u_0 = y_0$. 
Then $\mathcal M_0(G) \geq \overline{\sigma}(Ce^{At_0}B) \geq \|Ce^{At_0} B u_0\|_2/\|u_0\|_2 = \|y_0\|_2/\|u_0\|_2 > 0$. Some such condition is of course required, because if we take
$C = [1 \; 1]$, $B= \begin{bmatrix} 1\\-1\end{bmatrix}$, $A= -I_2$, then $Ce^{At}B=0$ for all $t$.
\end{remark}

\begin{remark}
The famous
estimate (upper bound due to Spijker \cite{spijker})
\begin{equation}
\label{spijker}
K(A) \leq M_0(A) \leq n e K(A)
\end{equation}
holds for matrices $A$ of size $n \times n$, and the global minimum $K(A)=M_0(A)=1$ is attained  for matrices
where $e^{At}$ is a contraction in the spectral norm, and in particular,  for normal matrices. For this reason $M_0(A)$, and $K(A)$, have been considered as
'measures of non-normality' of a matrix.  
\end{remark}

The following extends (\ref{spijker}), obtained in   \cite{kreiss_himself,leveque,spijker}, to system norms:

\begin{theorem}\label{KreissUB}
We have
$$ 
\mathcal K(G) \leq \mathcal M_0(G) \leq en\, \mathcal K(G).
$$
\end{theorem}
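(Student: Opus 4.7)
The lower bound $\mathcal K(G) \leq \mathcal M_0(G)$ is already Theorem \ref{KreissLB}, so the plan focuses on the upper bound $\mathcal M_0(G) \leq en\,\mathcal K(G)$. The natural strategy is to lift Spijker's proof of the classical Kreiss matrix theorem $M_0(A)\leq en\,K(A)$ to the transfer-function setting. Write $R(s) = C(sI-A)^{-1}B$ and start from the Bromwich inversion formula
$$Ce^{At}B \;=\; \frac{1}{2\pi i}\int_{\gamma-i\infty}^{\gamma+i\infty} e^{st} R(s)\,ds, \qquad \gamma > 0,$$
which is valid for any $\gamma>0$ since $A$ is stable. Because $R$ decays only like $1/|s|$ along the vertical contour, this integral is not absolutely convergent; to repair this I would integrate by parts in $\omega = {\rm Im}(s)$. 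As $R(\gamma+i\omega)\to 0$ when $|\omega|\to\infty$, the boundary terms vanish and one arrives at
$$Ce^{At}B \;=\; -\,\frac{e^{\gamma t}}{2\pi t}\int_{-\infty}^{\infty} e^{i\omega t}\, R'(\gamma+i\omega)\,d\omega,$$
where $R'(s) = -C(sI-A)^{-2}B$ decays like $1/|s|^2$, so that the new integral converges absolutely.

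To connect this to Spijker's lemma, which is a statement about scalar rational functions, I would test with unit vectors $\xi,\eta$. The function $\rho(s) := \xi^T R(s)\eta$ is a scalar rational function of total degree $\leq n$, vanishing at infinity, and analytic on $\{{\rm Re}(s)\geq \gamma\}$ because $A$ is stable. Applying Spijker's lemma \cite{spijker} to the shifted function $\tilde\rho(s) := \rho(s+\gamma)$ yields
$$\int_{-\infty}^{\infty} |\rho'(\gamma+i\omega)|\,d\omega \;\leq\; 2\pi n \sup_{{\rm Re}(s)\geq \gamma} |\rho(s)| \;\leq\; \frac{2\pi n\,\mathcal K(G)}{\gamma},$$
where the last step uses $|\rho(s)|\leq \overline{\sigma}(R(s))\leq \mathcal K(G)/{\rm Re}(s)$, which is just the definition of $\mathcal K(G)$. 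Combining this with the integration-by-parts identity gives the pointwise bound
$$|\xi^T Ce^{At}B\eta| \;\leq\; \frac{e^{\gamma t}}{2\pi t}\cdot\frac{2\pi n\,\mathcal K(G)}{\gamma} \;=\; \frac{e^{\gamma t}}{\gamma t}\, n\,\mathcal K(G).$$

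Finally I would optimize in $\gamma>0$: the classical choice $\gamma = 1/t$ minimizes $e^{\gamma t}/(\gamma t)$ to the value $e$, hence $|\xi^T Ce^{At}B\eta|\leq en\,\mathcal K(G)$. Passing to the supremum over $\|\xi\|_2,\|\eta\|_2\leq 1$ and over $t\geq 0$ then delivers $\mathcal M_0(G)\leq en\,\mathcal K(G)$, as required. The main obstacle I anticipate is the matrix-to-scalar reduction: one must verify that $\xi^T R(s)\eta$ really has degree $\leq n$ in the sense of Spijker's lemma (automatic because $A\in\mathbb R^{n\times n}$ gives $\det(sI-A)$ of degree exactly $n$ in the denominator and numerator of degree at most $n-1$), and that the boundary terms in the integration by parts vanish, which follows from the $O(1/|\omega|)$ decay of $R$ and the $O(1/|\omega|^2)$ decay of $R'$ along vertical lines in the right half-plane.
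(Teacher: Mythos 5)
Your proposal is correct and follows essentially the same route as the paper's own proof: the inverse Laplace representation of $\xi^T Ce^{At}B\eta$ along a vertical line, integration by parts to replace $q$ by $q'$, Spijker's bound $\|q'\|_1 \leq 2\pi n\, \|q\|_\infty$ for the scalar rational function $q(s)=\xi^T C(sI-A)^{-1}B\eta$ of degree $\leq n$, the choice $\gamma = 1/t$ producing the factor $e$, and finally suprema over unit test vectors and $t\geq 0$. If anything, your treatment is slightly more careful than the paper's displayed computation (explicit vanishing of boundary terms, the maximum-principle step relating the half-plane supremum to $\mathcal K(G)/\gamma$), but there is no substantive difference in method.
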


\begin{proof}
We have already shown in Theorem \ref{KreissLB} that $\mathcal K(G) \leq \mathcal M_0(G)$. For the upper bound estimate, take test vectors $\xi,\eta$, then on putting
$q(s) = \xi^TC(sI-A)^{-1}B\eta$, we have
\begin{align*}
\xi^T C e^{At} B\eta &= \frac{1}{2\pi j} \int_{{\rm Re}(s)=\mu} e^{st} \xi^TC(sI-A)^{-1}B\eta\, ds   \mbox{ (inverse Laplace)}\\
&= - \frac{1}{2\pi j} \int_{{\rm Re}(s)=\mu} \frac{e^{st}}{t} q'(s) ds  \mbox{ (partial integration)}\\ &= -\frac{1}{2\pi j} \frac{e^{\mu t}}{t} \int_{-\infty}^\infty e^{j\omega t} q'(\mu+j\omega) j\, d\omega
\end{align*}
Using
 ${\rm Re}(s)=\mu = 1/t$ and taking absolute values, we obtain
 \begin{align*}
 | \xi^TCe^{At}B\eta| 
&\leq \frac{e}{2\pi} \frac{1}{t} \int_{-\infty}^\infty |q'(1/t+j \omega) | d\omega = \frac{e}{2\pi} {\rm Re}(s) \|q'({\rm Re}(s) + j \cdot)\|_1.
  \end{align*}
Since by \cite{spijker} and \cite{leveque} we have
$\|q'\|_1 \leq 2\pi n \|q\|_\infty$, we find
\begin{align*}
|\xi^T Ce^{At}B\eta| &\leq en\, {\rm Re}(s) \sup_\omega | \xi^T C(({\rm Re}(s)+j\omega)I-A)^{-1} B\eta| \\
&\leq en \sup_{{\rm Re}(s) >0} {\rm Re}(s) |\xi^T C(sI-A)^{-1} B\eta|,
\end{align*}
so that taking the supremum over $\|\xi\|_2=1$, $\|\eta\|_2=1$ gives the right hand estimate.
\hfill $\square$
\end{proof}

\subsection{Attainment of the Kreiss lower bound}
\label{new_subs}
The fact that $K(A)$ and $M_0(A)$ attain their common global lower bound $K=M_0=1$
for contraction semi-groups $e^{At}$ in the spectral norm rises the question
whether the situation for $\mathcal K(G)$ and $\mathcal M_0(G)$ is similar. This is investigated in the present
section.  In particular, we ask whether a gap between $\mathcal K(G)$, $\mathcal M_0(G)$  and their 
lower bound can still be attributed to non-normal behavior of the system $A$-matrix.

\begin{proposition}
\label{bound}
We have the 
lower bound $\overline{\sigma}(CB) \leq \mathcal K(G)\leq \mathcal M_0(G)$.
\end{proposition}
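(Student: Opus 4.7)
The right-hand inequality $\mathcal K(G)\leq \mathcal M_0(G)$ has already been established in Theorem \ref{KreissLB}, so the only thing to prove is the lower bound $\overline{\sigma}(CB)\leq \mathcal K(G)$. My plan is to obtain this by examining the asymptotic behaviour of the resolvent along the positive real axis.

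The key observation is the Neumann expansion, valid for $|s|$ exceeding the spectral radius of $A$:
$$
s\,C(sI-A)^{-1}B = C\bigl(I - A/s\bigr)^{-1}B = CB + s^{-1}CAB + s^{-2}CA^2B+\cdots,
$$
which shows that $sC(sI-A)^{-1}B\to CB$ in the spectral norm as $s\to\infty$. Stability of $A$ is not even needed for the limit itself, but it guarantees that the resolvent is well-defined throughout the half-plane $\mathrm{Re}(s)>0$, so that the quantities entering the definition of $\mathcal K(G)$ are all admissible.

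Restricting the supremum in the definition of $\mathcal K(G)$ to real positive $s$, where $\mathrm{Re}(s)=s$, one has
$$
\mathcal K(G) \;\geq\; \sup_{s>0} \,\overline{\sigma}\bigl(sC(sI-A)^{-1}B\bigr).
$$
Using the standard Lipschitz bound $|\overline{\sigma}(M)-\overline{\sigma}(N)|\leq \overline{\sigma}(M-N)$, continuity of $\overline{\sigma}$ with respect to the spectral norm gives
$$
\lim_{s\to\infty,\;s\in\mathbb R_+}\overline{\sigma}\bigl(sC(sI-A)^{-1}B\bigr) = \overline{\sigma}(CB),
$$
and passing to the limit in the preceding inequality yields the desired lower bound $\mathcal K(G)\geq \overline{\sigma}(CB)$.

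There is no real obstacle here: the argument is just the high-frequency asymptotics of the transfer function, combined with continuity of the largest singular value. If one wished, the same statement could be read off from the initial-value theorem applied to $g(t)=Ce^{At}B$, noting that $sG(s)\to g(0^+)=CB$ as $s\to\infty$ along the real axis; this gives another self-contained derivation of the same identity.
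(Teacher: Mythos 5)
Your proof is correct and follows essentially the same route as the paper: both restrict the supremum in $\mathcal K(G)$ to the positive real axis and use the fact that $s\,C(sI-A)^{-1}B \to CB$ as $s\to\infty$ (the paper phrases this as $x(xI-A)^{-1}\to I$, you via the Neumann series), with the upper inequality delegated to Theorem \ref{KreissLB} in both cases. Your version merely spells out the continuity of $\overline{\sigma}$ explicitly, which the paper leaves implicit.
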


\begin{proof}
For $x > 0$ we have
$\mathcal K(G) \geq x \overline{\sigma} (C(xI-A)^{-1} B) = \overline{\sigma}(C x(xI-A)^{-1}B)$, and since the matrix $x(xI-A)^{-1}$ approaches $I$ as $x \to \infty$,
we get the lower bound $\overline{\sigma}(CB)$ all right. 
\hfill $\square$
\end{proof}

For  $G=(sI-A)^{-1}$ this reproduces the bound $K(A)\geq 1$, which as we know is attained when $e^{At}$ is
a contraction in the spectral norm, and in particular, for normal matrices.  The question is therefore
whether, or for which systems $G=(A,B,C)$, the bound $\overline{\sigma}(CB)$  is attained. 
It is clear from Proposition \ref{bound} that $\mathcal M_0(G)=\overline{\sigma}(CB)$
implies equality $\overline{\sigma}(CB)=\mathcal K(G)=\mathcal M_0(G)$. However, in the matrix case
the reverse argument is also true, i.e., $K(A)=1$ implies $M_0(A)=1$ as a consequence of the Hille-Yosida theorem \cite{engel2000one}.
The analogous result for systems is no longer valid.

\begin{example} %ex 1
If we consider a stable SISO system
$$
G(s) = \frac{c_{n-1} s^{n-1} +\dots + c_0}{s^n + a_{n-1} s^{n-1} + \dots + a_0}
$$
then in controllable companion form
$$
A= \begin{bmatrix} 0 & 1 & 0& \dots& 0\\
0 & 0 & 1 & &\\
\dots & && \ddots &\\
0 & 0 & \dots & &1 \\
-a_0&-a_1& &&-a_{n-1} \end{bmatrix},\;
B = \begin{bmatrix} 0 \\ \vdots \\ 0 \\ 1\end{bmatrix},\;
C = [c_0 \dots c_{n-1}].
$$
If the degree of the numerator is $n-1$, then we can
normalize by taking the system $G/c_{n-1}$, then $\overline{\sigma}(CB)=1$, and we may ask whether there are choices of 
the $a_i$, $c_i$ where this bound is attained. However, if the degree
of the numerator is $\leq n-2$, then always $CB=0$, so here the lower bound is never attained. 
\end{example}

This leaves now two situations. In case $\overline{\sigma}(CB)=0$ one may wonder
under what conditions $\mathcal K(G)=\mathcal M_0(G) >0$ is satisfied, and whether this holds under normality of $A$. On the other
hand, when $\overline{\sigma}(CB) > 0$ one may ask under what conditions the lower bound is attained,
whether attainment  $\overline{\sigma}(CB)=\mathcal K(G)$ implies attainment $\overline{\sigma}(CB)=\mathcal M_0(G)$, and again, whether this is linked to normality of $A$. 

The following example shows that in the case $\overline{\sigma}(CB)=0$,
normality of $A$ is no longer the correct answer. 

\begin{example} %ex2
\label{example3}
Take $C = [1 \; 1]$, $B = \begin{bmatrix} 1\\-1\end{bmatrix}$, $A = \begin{bmatrix} -\lambda & 0 \\ 0 & -\mu\end{bmatrix}$ with $0< \lambda < \mu$. Then
$\overline{\sigma} (CB)=0$, but $Ce^{At}B = e^{-\lambda t} - e^{-\mu t}  \not=0$ for $t > 0$, so that $\mathcal M_0(G) > 0$, and by the Kreiss matrix theorem we also have
$\mathcal K(G) > 0$. This also means that neither $\mathcal M_0$ not $\mathcal K$ are monotone in $t$. For $\lambda=1$, $\mu=2$ we obtain $\mathcal K(G) = 0.1716 < \mathcal M_0(G) = 0.25$, 
\end{example}

In case $\overline{\sigma}(CB) > 0$, the situation is also fairly unsettled, as the following examples
underline.

\begin{example}  %ex3
Take $B = [ 0\; 0 \;1]^T$,  $C = [1\; 1 \;1]$,  $a_0 = 0.9608$,  $a_1 = 1$,  $a_2 = 1$, in the controllable companion
form above, which gives
$G = (s^2 + s + 1)/(s^3 + s^2 + s + 0.9608)$, then
$|CB| = 1$,  $\mathcal K(G) = \mathcal M_0(G) = 1$.  Here
the lower bound is attained, while
$K( (sI -A)^{-1} ) = 1.17$,  $M_0( (sI-A)^{-1} ) = 1.43 $,  thus with $A$ not a contraction, and in particular, not normal.
\end{example}

%{\color{magenta}
%remove because like example 2
%\begin{example}  %ex4
%We give an example of a normal matrix  $A$, where $\overline{\sigma}(CB) >0$, but $\mathcal K < \mathcal M_0$.
%Change Example \ref{example3} by putting $C=[1 , 1]$, $B=[1;-1+\epsilon]$. Then $\overline{\sigma}(CB)=\epsilon$.
%We get $Ce^{At}B = e^{-\lambda t} - (1-\epsilon)e^{-\mu t}$. With $\mu=2$, $\lambda=1$, $\epsilon = 0.25$
%we get $0.25=\overline{\sigma}(CB) < \mathcal K(G) =0.3006 < \mathcal M_0(G) = 0.3333$. 
%\end{example}
%}

\begin{example}  %ex5
\label{example5}
    Now we give an example where $\mathcal K(G) = \overline{\sigma}(CB) =1$, but $\mathcal K(G) < \mathcal M_0(G)$.
    Take $A = [-q , p;0,-q]$, $B=[b_1;b_2]$, $C=[c_1,c_2]$ with $b_1c_1+b_2c_2=1$. 
    Then with the choices $q = 0.6509$, 
$p = 0.8746$,
$C = [-19.5450, -19.1251]$, 
$B = [-0.2592 ;
     0.2126 ]$, 
     we get $\mathcal K(G)=1 < \mathcal M_0(G)=1.72$.
     This situation may also arise with normal $A$.
\end{example}

%{\color{magenta}
%same as ex 5, only in addition $A$ normal
%\begin{example}  %ex6
%The failure in Example \ref{example5} is again not related to failure of normality of $A$, because the same may occur with diagonal $A$
%as seen with $G(s)=\displaystyle\frac{s-2.032}{s^2+0.8456s+0.1769}$.
%     \end{example}
%}

\begin{example} %ex7
\label{example7}
    Example \ref{example5} can be used to analyze the special case considered in \cite{an_kreiss}, where the $C$-matrix
    is $J=[I_n,0]$ and the $B$-matrix is $J^T$.
    Starting out from the system
    in Example \ref{example5},
    we have to find a regular $2\times 2$
 matrix $T$ such that $CT^{-1}=[1,0]=J$ and $TB = [1;0]=J^T$. That requires $t_{11}=c_1$, $t_{12}=c_2$ and
 $c_1b_1+c_2b_2=1$. Moreover, we need to fix $t_{21},t_{22}$ such that
 $t_{21}b_1+t_{22}b_2=0$. That gives for $b_1 \not=0$:
 $$
 T = \begin{bmatrix} c_1 & c_2\\ -\frac{t_{22}b_2}{b_1}&t_{22}\end{bmatrix}
 $$
 which is regular for $t_{22}\not=0$. Now $G=Ce^{At}B= CT^{-1}T e^{At}T^{-1}TB=
 J  e^{TAT^{-1}t} J^T$, where $A$ is as in the previous example.  Then we have $1=\mathcal K(G)< \mathcal M_0(G)$,
 so the special structure $C=B^T=J$ used in \cite{an_kreiss} does not help. 
\end{example}

%{\color{magenta}
%like example 2, only $C=I$
%\begin{example} %ex8
%\label{example8}
%The case  $C=I$ does not help either. Let
%$A =
%[-0.0939, 1.0000;
%0 ,-0.0939]$,
%$
%B =
%[
%0.4722, 0.7973;
%0.0339, 0.5553]$, $C=I_2$, then
%$\overline{\sigma}(CB)=1.0577 < \mathcal K(G) = 1.9634 < \mathcal M_0(G) = 2.5226$.
%\end{example}
%}

%{\color{magenta}
%drop because does not give much
%\begin{example}  %ex9
%    As we have seen even for SISO systems with normal matrix $A$,  we cannot expect
%    to get equality $\mathcal K(G)=\mathcal M_0(G)=\overline{\sigma}(CB)$. There is, however, a special case
%    when $A$ is diagonalizable with real eigenvalues and all $c_ib_i$ have equal signs, say $c_ib_i>0$. Then by Laguerre's theorem
% the exponential polynomial $\sum_{i=1}^n-\lambda_i c_ib_i e^{-\lambda_i t}$ does not change sign, hence the curve
%   $M(t) =\sum_{i=1}^n c_ib_i e^{\lambda_it}$ has no extrema and is therefore monotone decreasing, in which case the maximum is %attained at
%    $t=0$ with value $\sum_{i=1}^n c_ib_i$.
%\end{example}
%}

\begin{remark}
    Reference \cite{swaroop} gives conditions, under which any induced system norm
    attains the value $\overline{\sigma}(CB)$. Since this applies to $\mathcal M_0(G)$, this case gives attainment.
\end{remark}

\subsection{Numerical abscissa}
\label{sect_abscissa}
Hausdorff's numerical abscissa $\omega(A)$ satisfies
$\|e^{tA}\| \leq e^{\omega(A)t}$, hence $e^{tA}$ is a contraction semigroup iff $\omega(A)\leq 0$.
Since $\omega(A)=\frac{d}{dt} \|e^{tA}\| \left\vert \begin{array}{c} \! \\ \vspace{-.4cm} \! \end{array}\right._{\!\!\!\!\!\!\!t=0}$, 
the slope of the curve $t \mapsto \|e^{tA}\|$ at $t=0$ in the matrix case conveys global information on the entire curve, and the semigroup $e^{tA}$. 
This is why in the fluid flow literature it has been suggested
that minimizing $\omega(A_{cl})$ in closed loop  might be a way to prevent transition to turbulence
\cite{whidborne2007minimization,trefethen1993hydrodynamic,trefethen_embree,hinrichsen2000transient,schmid2014analysis,MQMcW2011}. Due to
$\omega(A)=\frac{1}{2} \overline{\lambda}(A+A^T)$ this would have the additional advantage of being an eigenvalue optimization problem, easier
to handle than (\ref{kreiss_program}). However, in \cite{an_kreiss} we demonstrated that minimizing $\omega(A_{cl})$ in closed loop does {\it not} have the desired effect of reducing transients.

Nonetheless, it is worthwhile  to extend $\omega(A)$ to systems as
$\omega(G) = \frac{d}{dt}\| C e^{tA}B\| \left\vert\begin{array}{c} \! \\ \vspace{-.4cm} \! \end{array}\right._{\!\!\!\!\!\!\!t=0}$,
because then $\omega(G) \leq 0$ continues to be  a necessary condition for attainment $\mathcal M_0(G)=\overline{\sigma}(CB)$.
However, unlike the matrix case, it is no longer sufficient.  Before 
showing this, we address necessity of attainment
for the Kreiss norm:

\begin{proposition}
\label{lb}
    A necessary condition for attainment of the lower bound $\mathcal K(G)=\overline{\sigma}(CB)$ is
    $\overline{\lambda}(Y+Y^T) \leq 0$, where $Y =Q^TCABB^TC^T$, and where the columns of $Q$ form an orthonormal basis of the maximum eigenspace of
    $CBB^TC^T$.
\end{proposition}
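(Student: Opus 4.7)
My plan is to study the one-sided behaviour of $x \mapsto x\,\overline{\sigma}\bigl(C(xI-A)^{-1}B\bigr)$ as $x\to+\infty$ along the positive real axis, since Proposition \ref{bound} identified this limit with $\overline{\sigma}(CB)$. Attainment of the lower bound forces
\begin{equation*}
\overline{\sigma}\bigl(Cx(xI-A)^{-1}B\bigr)^{2} \leq \overline{\sigma}(CB)^{2} \qquad \text{for all } x>0,
\end{equation*}
so the condition in the statement will be read off from the first-order behaviour of the left-hand side in the small parameter $\epsilon = 1/x$.

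The first step is the Neumann expansion $(I-\epsilon A)^{-1} = I + \epsilon A + O(\epsilon^{2})$, which yields
\begin{equation*}
Cx(xI-A)^{-1}B = CB + \epsilon\, CAB + O(\epsilon^{2}),
\end{equation*}
and therefore, for the symmetric matrix $M(\epsilon) = Cx(xI-A)^{-1}BB^{T}x(xI-A)^{-T}C^{T}$, whose maximum eigenvalue equals $\overline{\sigma}\bigl(Cx(xI-A)^{-1}B\bigr)^{2}$,
\begin{equation*}
M(\epsilon) = M_{0} + \epsilon\, M_{1} + O(\epsilon^{2}), \qquad M_{0} = CBB^{T}C^{T},\quad M_{1} = X + X^{T},
\end{equation*}
with $X = CABB^{T}C^{T}$. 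In particular $\lambda_{\max}(M_{0}) = \overline{\sigma}(CB)^{2}$.

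The core step is analytic perturbation theory (Rellich/Kato) for symmetric matrices: with $Q$ an orthonormal basis of the maximum eigenspace of $M_{0}$,
\begin{equation*}
\lambda_{\max}\bigl(M(\epsilon)\bigr) = \lambda_{\max}(M_{0}) + \epsilon\,\overline{\lambda}\bigl(Q^{T}M_{1}Q\bigr) + O(\epsilon^{2})
\end{equation*}
as $\epsilon \to 0^{+}$. Plugging this into the attainment inequality, dividing by $\epsilon>0$, and letting $\epsilon\to 0^{+}$ forces $\overline{\lambda}(Q^{T}M_{1}Q) \leq 0$. Since $Q^{T}M_{1}Q = Q^{T}XQ + (Q^{T}XQ)^{T} = Y + Y^{T}$ with $Y = Q^{T}CABB^{T}C^{T}Q$, the stated necessary condition follows.

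The only real subtlety is the case where $\lambda_{\max}(M_{0})$ is not simple: then the $k$ eigenvalue branches emerging from $\lambda_{\max}(M_{0})$ all need to stay below $\lambda_{\max}(M_{0})$ to first order, and the compressed matrix $Q^{T}M_{1}Q$ provides exactly their slopes. If one prefers to avoid invoking Kato, the Courant-Fischer variational characterization applied to unit vectors in $\operatorname{im}(Q)$ gives the one-sided inequality $\lambda_{\max}(M(\epsilon)) \geq \lambda_{\max}(M_{0}) + \epsilon\,\overline{\lambda}(Q^{T}M_{1}Q) + O(\epsilon^{2})$ directly and suffices for necessity.
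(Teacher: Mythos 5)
Your proof is correct, but it takes a genuinely different route from the paper's. The paper never works on the real axis directly: it invokes the parametric representation of Theorem \ref{theorem1}, rewrites $\mathcal K(G)=\max_{\eta\in[0,2]}\left\|C\left(sI-\left(\frac{\eta}{2-\eta}A-I\right)\right)^{-1}B\right\|_\infty$, observes that $\eta=0$ contributes exactly $\overline{\sigma}(CB)$ attained at the single frequency $\omega=0$, so that attainment of the lower bound forces the slope of $\phi:\eta\mapsto\|G(\eta,\cdot)\|_\infty$ at $\eta=0$ to be non-positive, and then evaluates $\phi'(0)$ with the nonsmooth-analysis toolkit of Lemmas \ref{svd} and \ref{sigma_prime} (Clarke directional derivative of $\overline{\sigma}$ via an SVD of $CB$), arriving at $\phi'(0)=\frac{1}{4\overline{\sigma}(CB)}\overline{\lambda}(Y+Y^T)$. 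You instead restrict the defining supremum of $\mathcal K$ to real $s=x>0$ (legitimate, since attainment makes these values at most $\overline{\sigma}(CB)$, which is all you use), apply the Neumann expansion $Cx(xI-A)^{-1}B=CB+\epsilon\, CAB+O(\epsilon^2)$ with $\epsilon=1/x$, square to the analytic symmetric family $M(\epsilon)$, and extract the first-order condition by symmetric eigenvalue perturbation; your Courant--Fischer fallback, using $u=Qv$ with $v$ a top eigenvector of $Q^TM_1Q$, makes the argument fully elementary, since only the one-sided bound $\lambda_{\max}(M(\epsilon))\geq\lambda_{\max}(M_0)+\epsilon\,\overline{\lambda}(Q^TM_1Q)+O(\epsilon^2)$ is needed for necessity, and your treatment of a multiple top eigenvalue through the compression $Q^TM_1Q$ is exactly right. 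Your route buys independence from Theorem \ref{theorem1} and from the Clarke-subdifferential machinery, and in fact proves something marginally stronger: the condition is already necessary for attainment of the supremum restricted to the positive real axis. The paper's route, at the price of heavier machinery, yields the actual slope of the inner parametric program in (\ref{parametric}), which is of independent interest for the synthesis algorithm. Note finally that your $Y=Q^TCABB^TC^TQ$, with the trailing $Q$, is the dimensionally consistent reading of the statement (the paper drops the final $Q$ by an evident typo), and both computations produce the same condition $\overline{\lambda}(Y+Y^T)\leq 0$.
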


\begin{proof}
Let $A(\eta) = \frac{\eta}{2-\eta}A-I$ and put $G(\eta,s) = C(sI-A(\eta))^{-1}B$,
then (\ref{parametric}) can be re-written as $\mathcal K(G) = \max_{\eta \in [0,2]} \|G(\eta,\cdot)\|_\infty$. 
Now 
$\eta=0$ contributes the value $\overline{\sigma}(CB)$ to the maximum over $\eta\in [0,2]$, 
because
$A(0)=-I$, and therefore $G(0,s) = C(sI-A(0))^{-1}B = (s+1)^{-1}CB$, hence
$\|G(0,\cdot)\|_\infty = \max_\omega |(j\omega+1)^{-1}|\,\overline{\sigma}(CB) = \overline{\sigma}(CB)$, attained at the single frequency $\omega=0$.
In consequence,
due to our hypothesis $\mathcal K(G)=\overline{\sigma}(CB) > 0$,
the slope of
$\phi:\eta \mapsto \|G(\eta,\cdot)\|_\infty$ at $\eta = 0$ must be non-positive, as otherwise $\|G(\eta,\cdot)\|_\infty=\|C(sI-A(\eta))^{-1}B\|_\infty$
would attain values $> \overline{\sigma}(CB)$ for some small $\eta > 0$. 

To compute $\phi'(0)$, observe that
since $\|G(0,\cdot)\|_\infty$ is attained at the single frequency $\omega=0$, we have
\begin{align*}
\phi'(0)&= {\|\cdot\|_\infty}'(G(\eta,\cdot),\textstyle\frac{d}{d\eta} G(\eta,\cdot))\left\vert\begin{array}{c} \! \\ \vspace{-.4cm} \! \end{array}\right._{\!\!\!\!\!\!\!\eta=0} 
=\overline{\sigma}'(G(\eta,j0),\textstyle\frac{d}{d\eta}G(\eta,j0)) \left\vert\begin{array}{c} \! \\ \vspace{-.4cm} \! \end{array}\right._{\!\!\!\!\!\!\!\eta=0} 
\\
&= \overline{\sigma}'(CB,-C(A(\eta)^{-1} \textstyle\frac{d}{d\eta} A(\eta)A(\eta)^{-1})B) \left\vert\begin{array}{c} \! \\ \vspace{-.4cm} \! \end{array}\right._{\!\!\!\!\!\!\!\eta=0}
= \overline{\sigma}'(CB,C \textstyle\frac{1}{2}AB) \\ &= \textstyle\frac{1}{4}\, \overline{\lambda}(Q^H (CAB)P + P^H(B^TA^TC^T)Q) \\
&=\frac{1}{4 \overline{\sigma}(CB)} \overline{\lambda}(Q^H C(ABB^T + BB^TA^T)C^T Q), 
\end{align*}
where the second line uses $\frac{d}{d\eta} \left[ A(\eta)^{-1}\right]= - A(\eta)^{-1} \frac{d}{d\eta} A(\eta) A(\eta)^{-1} =
-A(\eta)^{-1} \frac{2}{(2-\eta)^2} A A(\eta)^{-1}$, which at $\eta=0$ gives $-\frac{1}{2}A$, whereas the third line uses 
Lemma \ref{sigma_prime} based on a SVD $G(0,0) =CB= \begin{bmatrix} Q&R\end{bmatrix} \begin{bmatrix} \overline{\sigma}(CB)I&\\&\Sigma\end{bmatrix}
\begin{bmatrix} P^T\\T^T\end{bmatrix}$.
The last line follows by re-substituting $Q^HCB = {\overline{\sigma}(CB)} P^H$.
\hfill $\square$
\end{proof}

Note that this leads back to $\omega(A)\leq 0$
for $C=B=I_n$.

\begin{lemma}
\label{svd}
    The Clarke subdifferential of the maximum singular value function 
    is $\partial \overline{\sigma}(G)= \{Q Y P^H: Y \succeq 0, {\rm Tr}(Y)=1\}$, where $G= \begin{bmatrix} Q & R\end{bmatrix} \begin{bmatrix} \overline{\sigma}(G)&\\&\Sigma\end{bmatrix}\begin{bmatrix} P^H\\T^H\end{bmatrix}$ is a SVD of $G$.
\end{lemma}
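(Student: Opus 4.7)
The plan is to leverage convexity: since $\overline{\sigma}$ is a matrix norm, it is convex, so its Clarke subdifferential coincides with the convex subdifferential, and standard convex analysis tools apply directly.

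The starting point is the support-function representation
$$
\overline{\sigma}(G)=\max_{\|u\|_2=\|v\|_2=1}\mathrm{Re}(u^H G v)=\max_{\|u\|_2=\|v\|_2=1}\mathrm{Re}\,\Tr\!\bigl((uv^H)^H G\bigr),
$$
which exhibits $\overline{\sigma}$ as a pointwise maximum, over a compact index set, of real-linear functionals of $G$ whose Frobenius gradient is the rank-one matrix $uv^H$. The Ioffe--Tikhomirov (Danskin-type) rule for convex max-functions then gives
$$
\partial\overline{\sigma}(G)=\mathrm{conv}\bigl\{uv^H:(u,v)\in\mathcal{A}(G)\bigr\},
$$
where $\mathcal{A}(G)$ is the active set of unit-norm pairs attaining the maximum.

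The second step, which I regard as the main obstacle, is to identify $\mathcal{A}(G)$ from the block SVD in the statement. The chain $\mathrm{Re}(u^H G v)\leq\|u\|_2\|Gv\|_2\leq\overline{\sigma}(G)\|u\|_2\|v\|_2$ becomes a double equality precisely when $v$ lies in the column span of $P$ and $u$ is parallel to $Gv$. Writing $v=P\alpha$ with $\alpha\in\mathbb{C}^r$, $\|\alpha\|_2=1$, where $r$ is the multiplicity of $\overline{\sigma}(G)$, one computes $Gv=\overline{\sigma}(G)Q\alpha$ (because $T^H P=0$ by orthogonality of the SVD), forcing $u=Q\alpha$. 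The extreme active gradients are therefore exactly the rank-one matrices $Q(\alpha\alpha^H)P^H$.

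To conclude, I would invoke the classical spectral-theorem identification
$$
\mathrm{conv}\bigl\{\alpha\alpha^H:\alpha\in\mathbb{C}^r,\,\|\alpha\|_2=1\bigr\}=\bigl\{Y\in\mathbb{C}^{r\times r}:Y\succeq 0,\,\Tr(Y)=1\bigr\},
$$
which follows from the eigendecomposition of any unit-trace PSD matrix. Pushing forward through the linear map $Y\mapsto QYP^H$ yields $\partial\overline{\sigma}(G)=\{QYP^H:Y\succeq 0,\,\Tr(Y)=1\}$. No topological closure is needed beyond the convex hull because $\mathcal{A}(G)$ is compact and $(u,v)\mapsto uv^H$ is continuous, so the hull is automatically closed.
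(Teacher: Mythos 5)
Your proof is correct, but it takes a genuinely different route from the paper. The paper obtains the formula by squaring: from $\overline{\sigma}(G)^2=\overline{\lambda}(GG^H)$ it applies the nonsmooth chain rule $2\overline{\sigma}(G)\,\partial\overline{\sigma}(G)=F'(G)^{*}\partial\overline{\lambda}(F(G))$ with $F(X)=XX^H$, imports the known eigenvalue formula $\partial\overline{\lambda}(GG^H)=\{QYQ^H:Y\succeq 0,\ {\rm Tr}(Y)=1\}$, computes the adjoint action $F'(G)^{*}S=2SG$, and finishes by substituting $Q^HG=\overline{\sigma}(G)P^H$. You instead work directly with the support representation $\overline{\sigma}(G)=\max_{\|u\|_2=\|v\|_2=1}{\rm Re}(u^HGv)$, invoke the convex max-rule (Ioffe--Tikhomirov) to get the convex hull of active gradients, identify the active pairs $(u,v)=(Q\alpha,P\alpha)$ from the equality chain and $T^HP=0$, and use the density-matrix identity ${\rm conv}\{\alpha\alpha^H:\|\alpha\|_2=1\}=\{Y\succeq 0,\ {\rm Tr}(Y)=1\}$; your appeal to Clarke $=$ convex subdifferential for the norm $\overline{\sigma}$ is legitimate, and your compactness remark correctly disposes of closure issues in finite dimension. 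What each approach buys: yours is self-contained --- it never invokes the eigenvalue subdifferential or the Clarke chain rule (whose equality form silently requires regularity of $\overline{\lambda}$, guaranteed here by its convexity), and it avoids the division by $2\overline{\sigma}(G)$ in the derivation; the paper's route is shorter given the standard $\partial\overline{\lambda}$ formula as a black box, and its SVD bookkeeping feeds directly into the subsequent Lemmas on directional derivatives of $\overline{\sigma}$ and of the $H_\infty$-norm. Note that your density-matrix identity is playing exactly the role the eigenvalue formula plays for the paper, so the two arguments are of comparable depth. One shared caveat, not a gap relative to the paper: both arguments implicitly assume $\overline{\sigma}(G)>0$ (for $G=0$ there is no top singular block $Q,P$ and the subdifferential is the whole unit ball of the dual nuclear norm, so the stated formula cannot hold anyway); in your version this enters when you force $u$ to be parallel to $Gv$, which requires $Gv\neq 0$.
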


\begin{proof}
From $\overline{\sigma}(G)^2 = \overline{\lambda}(GG^H)$ we get $2 \overline{\sigma}(G)\partial \overline{\sigma}(G) = F'(G)^* \partial \overline{\lambda}(F(G))$, where $F:\mathbb M^{n,m} \to \mathbb S^m$ is the mapping $F(X)=XX^H$. 
Now $\partial \overline{\lambda}(GG^H) = \{QYQ^H: Y \succeq 0, {\rm Tr}(Y)=1\}$, where the columns of $Q$ in the SVD form an orthonormal basis of the maximum eigenspace of $GG^H$.
Furthermore, 
$F'(G)D = GD^H + DG^H$, hence for a test vector $S \in \mathbb S^m$ we have by the definition of the adjoint
$\langle D, F'(G)^*S\rangle = \langle F'(G)D,S\rangle = {\rm Re\, Tr\,} S(GD^H+DG^H)  = 2{\rm Re \, Tr\,}  SDG^H
= 2 {\rm Re\, Tr\,} (SG)^HD = \langle D, 2SG\rangle$, so that the action of the adjoint is
$F'(G)^*S = 2SG$. On substituting $S=QYQ^H\in \partial \overline{\lambda}(GG^H)$, we obtain 
$\partial \overline{\sigma}(G) = \frac{1}{2 \overline{\sigma}(G)} \{2Q Y Q^H G: Y \succeq 0, {\rm Tr}(Y)=1\}$. Now since $Q^HG = \overline{\sigma}(G) P^H$ from the SVD, we obtain the claimed
$\partial \overline{\sigma}(G) = \{QYP^H: Y \succeq 0, {\rm Tr}(Y)=1\}$.
    \hfill $\square$
\end{proof}

\begin{lemma}
\label{sigma_prime}
    The Clarke directional derivative 
    is $\overline{\sigma}'(G,D)= \frac{1}{2} \overline{\lambda}(Q^HDP + P^HD^HQ)$.
\end{lemma}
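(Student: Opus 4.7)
The plan is to exploit that $\overline{\sigma}(\cdot)$ is a convex function on matrix space, hence Clarke regular, so its Clarke directional derivative coincides with the classical one-sided directional derivative and equals the support function of the Clarke subdifferential just computed in Lemma \ref{svd}:
$$
\overline{\sigma}'(G,D) = \max_{S \in \partial \overline{\sigma}(G)} \langle S, D\rangle,
$$
where $\langle A,B\rangle = \operatorname{Re}\operatorname{Tr}(A^H B)$ is the real inner product on complex matrices.

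Substituting $S = QYP^H$ with $Y \succeq 0$, $\operatorname{Tr}(Y) = 1$, and using the cyclic property of the trace together with $Y = Y^H$, I first rewrite
$$
\langle QYP^H, D\rangle = \operatorname{Re}\operatorname{Tr}\bigl(PY^HQ^H D\bigr) = \operatorname{Re}\operatorname{Tr}\bigl(Y Q^H D P\bigr).
$$
Then I symmetrize: since $\overline{\operatorname{Tr}(YQ^HDP)} = \operatorname{Tr}\bigl((YQ^HDP)^H\bigr) = \operatorname{Tr}(YP^HD^HQ)$ (again using $Y^H = Y$ and cyclicity), the real part equals
$$
\operatorname{Re}\operatorname{Tr}(YQ^HDP) = \tfrac{1}{2}\operatorname{Tr}\!\bigl(Y(Q^HDP + P^HD^HQ)\bigr).
$$

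Writing $M := Q^HDP + P^HD^HQ$, note that $M$ is Hermitian by construction, so the problem reduces to the standard SDP variational identity
$$
\max\bigl\{\operatorname{Tr}(YM) : Y \succeq 0,\ \operatorname{Tr}(Y) = 1\bigr\} = \overline{\lambda}(M),
$$
achieved at $Y = vv^H$ for any unit top eigenvector $v$ of $M$. Combining these steps yields
$$
\overline{\sigma}'(G,D) = \tfrac{1}{2}\,\overline{\lambda}\bigl(Q^HDP + P^HD^HQ\bigr),
$$
as claimed.

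The only delicate point is the bookkeeping with complex inner products: one must verify that $\operatorname{Re}\operatorname{Tr}(YQ^HDP)$ really does symmetrize into a trace against the Hermitian matrix $M$ before the eigenvalue formula can be applied. Once that symmetrization is handled, the remainder is a direct application of Lemma \ref{svd} and the well-known semidefinite characterization of the largest eigenvalue.
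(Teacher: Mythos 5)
Your proof is correct and follows essentially the same route as the paper: express $\overline{\sigma}'(G,D)$ as the support function of the subdifferential from Lemma \ref{svd}, symmetrize the trace term into $\frac{1}{2}\operatorname{Tr}\bigl(Y(Q^HDP+P^HD^HQ)\bigr)$, and invoke the standard identity $\max\{\operatorname{Tr}(YM): Y\succeq 0,\ \operatorname{Tr}(Y)=1\}=\overline{\lambda}(M)$ for Hermitian $M$. Your additional remarks on convexity (hence Clarke regularity) and on verifying the symmetrization over the complex inner product are sound and merely make explicit what the paper leaves implicit.
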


\begin{proof}
We have $\overline{\sigma}'(G,D) = \max\{\langle \Phi,D\rangle: \Phi\in \partial \overline{\sigma}(G)\}=
\max\{ {\rm Re\, Tr \,} \Phi^HD: \Phi \in \partial \overline{\sigma}(G)\} = \max\{{\rm Re \, Tr\,} PYQ^HD: Y \succeq 0, {\rm tr}(Y)=1\}
= \max \{ \frac{1}{2} {\rm Re\, Tr\,} Y(Q^HDP + P^HD^HQ):Y \succeq 0, {\rm Tr}(Y)=1\}= \frac{1}{2} \overline{\lambda} (Q^HDP+P^HD^HQ)$.
    \hfill $\square$
\end{proof}

On re-substituting $Q^HG = \overline{\sigma}(G)P^H$, we can also write this in the form
$\overline{\sigma}'(G,D) = \frac{1}{2 \overline{\sigma}(G)} \overline{\lambda}(Q^HDG^HQ + Q^HG D^HQ)
= \frac{1}{2 \overline{\sigma}(G)} \overline{\lambda}(Q^H   \left[DG^H + G D^H\right]Q)$.

The following is now a consequence of the finite maximum rule for the subdifferential \cite[Prop. 2.3.12]{clarke1990optimization}, along with a non-smooth chain rule
\cite[Sect. 2.8]{clarke1990optimization}. 
A similar 
argument was already used in \cite[Sect. III]{apkarian2006nonsmooth}, \cite[Sect. 4 and 6]{an:05}, and \cite[Thm. 3.2]{apkarian2006nonsmooth2}.

\begin{lemma}
    Suppose $\|G\|_\infty$ is attained at the finitely many frequencies $\omega_1, \dots,\omega_r$. Then
    $\partial \|\cdot\|_\infty (G) = \{ \sum_{k=1}^r Q_k Y_k P_k^H: Y_k\succeq 0, \sum_{k=1}^r {\rm Tr}(Y_k)=1\}$, where for every $k$ we let
    $G(j\omega_k)=\begin{bmatrix} Q_k& R_k\end{bmatrix} {\rm diag}(\|G\|_\infty , \Sigma_k) \begin{bmatrix} P_k&T_k\end{bmatrix}^H$ be a SVD of 
    $G(j\omega_k)$.
    \hfill $\square$
\end{lemma}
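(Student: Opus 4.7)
The plan is to apply the finite maximum rule for the Clarke subdifferential to the representation $\|G\|_\infty = \sup_{\omega \in \mathbb{R}} \overline{\sigma}(G(j\omega))$ at a point $G$ where the supremum is attained at the finitely many frequencies $\omega_1,\dots,\omega_r$. First I would set $\phi_k(G) := \overline{\sigma}(G(j\omega_k))$, so that $\phi_k$ is the composition of the convex (and therefore Clarke-regular) function $\overline{\sigma}$ with the continuous linear evaluation map $G \mapsto G(j\omega_k)$. Each $\phi_k$ is then itself convex and regular on the space of transfer functions, and by hypothesis $\phi_k(G)=\|G\|_\infty$ for $k=1,\dots,r$.

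Second, I would reduce the full supremum $\sup_\omega \overline{\sigma}(G(j\omega))$ to the finite maximum $\max_{k=1,\dots,r} \phi_k(G)$. This uses that $\omega \mapsto \overline{\sigma}(G(j\omega))$ is continuous on the one-point compactification of $\mathbb{R}$ (for stable rational $G$ the value at infinity exists and is bounded), so one can separate the active frequencies from the rest and localize; on a small neighbourhood of $G$ the supremum is still attained only on arbitrarily small neighbourhoods of the $\omega_k$, and passing to the strict maximum over these finitely many peaks yields the same function value and the same Clarke subdifferential.

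Third, I would apply the finite maximum rule (see Clarke), which for a pointwise maximum of finitely many regular convex functions attaining a common value gives
\begin{equation*}
\partial\|\cdot\|_\infty(G) \;=\; \operatorname{conv}\bigcup_{k=1}^{r}\partial\phi_k(G).
\end{equation*}
Since the evaluation map $G \mapsto G(j\omega_k)$ is linear, its adjoint is just the insertion of a matrix at frequency $\omega_k$ in the appropriate pairing, so by Lemma \ref{svd} applied at $G(j\omega_k)$ we get $\partial\phi_k(G) = \{Q_k Y P_k^H : Y \succeq 0,\; \operatorname{Tr}(Y)=1\}$, with $Q_k,P_k$ the SVD blocks of $G(j\omega_k)$ corresponding to $\|G\|_\infty$. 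A generic element of the convex hull then has the form $\sum_{k=1}^r \lambda_k Q_k Y_k P_k^H$ with $\lambda_k\geq 0$, $\sum_k\lambda_k=1$, $Y_k\succeq 0$, $\operatorname{Tr}(Y_k)=1$; setting $\widetilde Y_k := \lambda_k Y_k$ gives $\widetilde Y_k\succeq 0$ and $\sum_k \operatorname{Tr}(\widetilde Y_k)=\sum_k\lambda_k=1$, which is exactly the parametrization in the statement.

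I expect the main obstacle to be the step that formally replaces the supremum over $\omega\in\mathbb{R}$ by a finite maximum over active frequencies: the naive finite max rule is stated for finitely many functions, whereas here one must justify that contributions from non-active frequencies do not enter the Clarke subdifferential. The clean way to handle this is a localization argument, using continuity of $\omega\mapsto \overline{\sigma}(G(j\omega))$ and the strict drop away from the $\omega_k$ together with upper semicontinuity of the Clarke subdifferential map, so that only the active frequencies contribute. The remaining algebraic manipulation — passing from the convex combination of $\{Q_kY P_k^H\}$-sets to the absorbed form with $\sum_k\operatorname{Tr}(Y_k)=1$ — is routine.
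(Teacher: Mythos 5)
Your proposal is correct and follows essentially the same route as the paper, which proves this lemma simply as an immediate consequence of the finite maximum rule for the subdifferential combined with Lemma \ref{svd}; you merely fill in the details the paper leaves implicit (the localization of the supremum over $\omega$ to the finitely many active frequencies, and the absorption of the convex weights $\lambda_k$ into the $Y_k$), and both of these steps are handled correctly.
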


One also immediately gets the following description of the Clarke directional derivative of the $H_\infty$-norm:

\begin{lemma}
Suppose $\|G\|_\infty$ is attained at the finitely many frequencies $\omega_1, \dots,\omega_r$. Then
$\|\cdot\|_\infty'(G,D) = \displaystyle\max_{k=1,\dots,r} \textstyle\frac{1}{2} \overline{\lambda}(Q_k^HDP_k+P_k^HD^HQ_k)$, 
with $P_k,Q_k$ the same as above.
\hfill $\square$
\end{lemma}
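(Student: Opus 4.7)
The plan is to derive the formula directly from the preceding lemma by invoking the support-function characterization of the Clarke directional derivative. Since $G \mapsto \|G\|_\infty$ is convex and continuous, its Clarke derivative coincides with the ordinary one-sided directional derivative, and equals the support function of the subdifferential, namely $\max\{\langle \Phi, D\rangle : \Phi \in \partial \|\cdot\|_\infty(G)\}$. The task therefore reduces to maximizing the real inner product $\langle \Phi, D\rangle = {\rm Re\, Tr\,}(\Phi^H D)$ over the set parametrized in the preceding lemma.

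First, I would substitute $\Phi = \sum_{k=1}^r Q_k Y_k P_k^H$ into $\langle \Phi, D\rangle$, cycle the trace, and symmetrize using $Y_k^H = Y_k$, exactly as in the single-frequency computation in the proof of Lemma \ref{sigma_prime}. This yields
$$
\langle \Phi, D\rangle \;=\; \frac{1}{2} \sum_{k=1}^r \Tr(Y_k M_k), \qquad M_k := Q_k^H D P_k + P_k^H D^H Q_k,
$$
with each $M_k$ Hermitian.

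Next, I would carry out the outer maximization in two stages. Setting $\tau_k := \Tr(Y_k)$, for any fixed $\tau_k\geq 0$ the inner maximum of $\tfrac{1}{2}\Tr(Y_k M_k)$ over $Y_k\succeq 0$ of prescribed trace equals $\tfrac{\tau_k}{2}\overline{\lambda}(M_k)$, attained by a rank-one $Y_k$ supported on the top eigenvector of $M_k$. The remaining outer problem $\max \tfrac{1}{2}\sum_k \tau_k \overline{\lambda}(M_k)$ over the unit simplex $\{\tau_k\geq 0,\ \sum_k \tau_k=1\}$ is a linear program, whose optimum sits at a vertex and thus equals $\tfrac{1}{2}\max_{k=1,\dots,r} \overline{\lambda}(M_k)$, which is the claimed formula.

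I do not anticipate any substantial obstacle: everything boils down to a routine combination of the subdifferential formula of the preceding lemma and an elementary argument on the simplex of trace budgets $\tau_k$. The only point meriting a brief remark is that the set from the preceding lemma really is the Clarke subdifferential, which is automatic here since $\|\cdot\|_\infty$ is convex, so Clarke and convex subdifferentials coincide.
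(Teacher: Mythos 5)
Your proposal is correct and takes essentially the same route the paper intends: the paper states the result as an immediate consequence of the preceding subdifferential lemma, and your computation---expressing the support function $\max\{\operatorname{Re}\operatorname{Tr}\Phi^H D : \Phi\in\partial\|\cdot\|_\infty(G)\}$ via the same trace symmetrization as in Lemma~\ref{sigma_prime}, then maximizing the trace budgets $\tau_k=\operatorname{Tr}(Y_k)$ over the unit simplex, where the linear objective attains its maximum at a vertex---is precisely the omitted ``immediate'' argument. No gaps.
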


\begin{remark}
    Formulas for subgradients and directional derivatives of the $H_\infty$-norm have first been given in
    \cite{an:05,AN:2007,apkarian2006nonsmooth,apkarian2006nonsmooth2}. Using the SVD as in Lemma \ref{svd} is numerically preferable
    to formulas using the subdifferential $\partial \overline{\lambda}$ directly, and we  exploited this favorably in the 
    implementation of {\tt hinfstruct} and {\tt systune} \cite{MatlabRobust}.
\end{remark}

\begin{corollary}
Condition $\overline{\lambda}(Y+Y^T)\leq 0$ is also necessary for attainment of the lower bound $\mathcal M_0(G)=\overline{\sigma}(CB)$. Moreover, 
when $C=B=I_n$, this condition is also sufficient.
\end{corollary}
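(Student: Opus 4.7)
The plan is to exploit the sandwich inequality $\overline{\sigma}(CB)\le \mathcal{K}(G)\le \mathcal{M}_0(G)$ from Proposition \ref{bound} together with Theorem \ref{KreissLB} for the necessity, and then to recognize that in the case $C=B=I$ the Kreiss-type condition $\overline{\lambda}(Y+Y^T)\le 0$ collapses to the classical Hausdorff numerical abscissa condition $\omega(A)\le 0$, which by semigroup theory forces $e^{At}$ to be a contraction.

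For necessity, I would argue that any $G$ attaining $\mathcal{M}_0(G)=\overline{\sigma}(CB)$ must also attain $\mathcal{K}(G)=\overline{\sigma}(CB)$, because the two quantities are squeezed between equal bounds; thereafter Proposition \ref{lb} applies directly and yields $\overline{\lambda}(Y+Y^T)\le 0$ with $Y=Q^TCABB^TC^T$, $Q$ orthonormal basis of the top eigenspace of $CBB^TC^T$. No extra work is required here.

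For sufficiency when $C=B=I$, I would first note that $CBB^TC^T=I_n$, so the top eigenspace is all of $\mathbb R^n$ and one may take $Q=I_n$. Then $Y=I\cdot A\cdot I \cdot I = A$, and the assumption $\overline{\lambda}(Y+Y^T)\le 0$ reads $\overline{\lambda}(A+A^T)\le 0$, i.e.\ $\omega(A)\le 0$. The key step is then the standard bound $\|e^{At}\|_2\le e^{\omega(A)t}\le 1$ for all $t\ge 0$, which gives $\mathcal{M}_0(G)=\sup_{t\ge 0}\overline{\sigma}(e^{At})\le 1$; combined with the lower bound $\overline{\sigma}(CB)=\overline{\sigma}(I)=1\le \mathcal{M}_0(G)$ from Proposition \ref{bound}, equality follows.

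I do not expect any real obstacle: the necessity is a one-line corollary of Propositions \ref{bound} and \ref{lb}, and the sufficiency reduces to the well-known contraction semigroup criterion. In fact the content of the corollary is largely negative: sufficiency fails in general (as witnessed by Examples \ref{example3}--\ref{example8}), and the special case $C=B=I$ succeeds precisely because then $Y$ encodes the full derivative $\frac{d}{dt}\|e^{At}\|_2$ at $t=0$, which in the matrix case controls the curve globally via $\omega(A)$ — a phenomenon that breaks down for non-trivial $C,B$.
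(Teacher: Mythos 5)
Your proposal is correct and takes essentially the same route as the paper: necessity by squeezing $\overline{\sigma}(CB)\leq \mathcal K(G)\leq \mathcal M_0(G)$ (Propositions \ref{bound} and Theorem \ref{KreissLB}) so that attainment of $\mathcal M_0$ forces attainment of $\mathcal K$, whereupon Proposition \ref{lb} gives $\overline{\lambda}(Y+Y^T)\leq 0$; and sufficiency for $C=B=I$ by observing $Q=I$, $Y=A$, so the condition becomes $\omega(A)\leq 0$, the classical contraction-semigroup criterion yielding $\sup_{t\geq 0}\overline{\sigma}(e^{At})\leq 1$. The paper also notes an alternative direct computation of $\frac{d}{dt}\|Ce^{tA}B\|$ at $t=0$ via Lemma \ref{sigma_prime}, but this is only a side remark, not a different method.
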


\begin{proof}
    The first part of the statement follows from (\ref{KreissUB}) in tandem with Proposition \ref{lb}. One may also obtain
    it directly by computing
    $\frac{d}{dt}\| C e^{tA}B\| \left\vert\begin{array}{c} \! \\ \vspace{-.4cm} \! \end{array}\right._{\!\!\!\!\!\!\!t=0}
    = \overline{\sigma}'(CB,CAB)
    = \frac{1}{2 \overline{\sigma}(CB)} \overline{\lambda}(Q^T(CABB^TC^T+ CBB^TATC^T)Q) =
    \frac{1}{2\overline{\sigma}(CB)} \overline{\lambda}(Y+Y^T)$ using Lemma \ref{sigma_prime}.
    
    For $C=B=I_n$ we have $Q=I_n$, hence
    $\overline{\lambda}(Y+Y^T)=\overline{\lambda}(A+A^T) = 2\omega(A)$, but $\omega(A) \leq 0$ is the classical 
    necessary {\it and} sufficient condition for a contraction semi-group.
    \hfill $\square$
\end{proof}

\begin{example}
Now we show that the necessary condition $\overline{\lambda}(Y+Y^T)\leq 0$ is generally not sufficient, which contrasts with the case $C=B=I_n$, where this is true.
We take 
$A = \begin{bmatrix} 0&1\\-6&-5\end{bmatrix}$, $B= \begin{bmatrix} 0\\1\end{bmatrix}$, $C= \begin{bmatrix} -10&1\end{bmatrix}$, where
one gets $1 = \overline{\sigma}(CB) =  \mathcal K(G) < \mathcal M_0(G) = 1.5148$, $\omega(G) = -30$ with
$\overline{\lambda}(Y+Y^T) = -30$, confirming that the condition is necessary for attainment of the Kreiss norm, but not sufficient for attainment of the transient amplification.  

Another case is
$A = \begin{bmatrix}0&1\\-5&-1 \end{bmatrix}$, $B = \begin{bmatrix}0\\1\end{bmatrix}$, $C=\begin{bmatrix} -8 & 1\end{bmatrix}$, which gives
$\overline{\sigma}(CB) = 1< \mathcal K(G) = 1.13$ with $\overline{\lambda}(Y+Y^T) = -18$, showing that the condition is neither 
sufficient for attainment of the Kreiss norm, nor of transient amplification. 

\end{example}

We do not know whether there are cases with
$\overline{\sigma}(CB) < \mathcal K(G) = \mathcal M_0(G)$.

\section{More system norms for transients}
\label{sect_norms2}
In this Section we consider several alternatives to the worst case peak norm and the Kreiss
norm as its frequency approximation. Computability and use for optimization are
primordial criteria. Persistent perturbations and noise are discussed in .

\subsection{$L^1 \to L^\infty$ system norm with euclidean vector norm}
\label{sect_more}
The discussion in Section \ref{sect_kreiss} considers $\mathcal M_0(G)$ as induced operator norm $G:(L^1,\|\cdot\|_{1,2}) \to (L^\infty,\|\cdot\|_{\infty,2})$, 
with the $\ell_2$-norm
as vector norm. However, (\ref{many_others}) shows that other choices of vector norms could lead to numerically exploitable
expressions $\mathcal K, \mathcal M$. Choosing test vectors $\xi\in \ell_{p'}$, $\eta\in \ell_r$ gives
$$
\sup_{{\rm Re}(s) > 0}{\rm Re}(s) \| C(sI-A)^{-1}B\|_{r,p} \leq \sup_{t\geq 0} \| Ce^{At} B\|_{r,p},
$$
where $\|M\|_{r,p}$ is the $\ell_p \to \ell_r$ induced matrix norm. This may lead to other criteria compatible
with the goal to sensing $L^1 \to L^\infty$ amplification. 
Tractable expressions are obtained e.g. for $p=1$, $r=\infty$, which corresponds to taking
$\xi\in \ell_1$, $\eta\in \ell_1$. Here we get the estimate
$$
\sup_{{\rm Re}(s) > 0} \max_{ik} \left| c_i {\rm Re}(s) (sI-A)^{-1} b_k\right| \leq \sup_{t\geq 0} \max_{ik}  \left| c_i e^{At}b_k\right|
$$
which reads as
$$
\max_{ik} \mathcal K(c_i e^{A\bullet}b_k) \leq \max_{ik} \mathcal M_0(c_i e^{A\bullet} b_k)
$$
with a finite maximum of SISO Kreiss constants and transient growth norms involved. This practical entry-wise Kreiss norm offers potential to weigh some channels more than others. The upper bound of $\mathcal M_0$ is again  $e n \max_{ik} \mathcal K(c_i e^{A\bullet}b_k)$ from Theorem \ref{KreissUB}.

\subsection{$L^1 \to L^\infty$ system norm with  $\ell_\infty$-vector norm} 
\label{sect_other}
Now take (\ref{young}), but with 
$|\xi|_\infty \leq 1,|\eta|_\infty \leq 1$. We get on the right
$$
|\xi^T (Ce^{At}B) \eta| \leq \|(C e^{At}B)\eta\|_1 \leq \| Ce^{At}B\|_{1,\infty}
$$
because the dual norm to $\ell_\infty$ is $\ell_1$. However, this norm is not very helpful for matrices with large dimension $m$, because for $A\in \mathbb R^{m \times n}$, we have:
$$
\|A\|_{1,\infty} = \max_{r\in \{-1,1\}^m} \|Ar\|_1,
$$
where $\{-1,1\}^m$ are $m$-vectors of $\pm 1$ entries.
With the above technique,  
we easily get the following estimate
$$
\max_{r\in \{-1,1\}^m} \mathcal K(G r) \leq  \max_{r \in \{-1,1\}^m}  \mathcal M_0(Gr).
$$

\subsection{Peak-to-peak norm for persistent perturbations\label{sect-Persistent}}
In this section, we discuss the choice
$p=\infty$, $q=r=1$ in Young's inequality (\ref{young}), which will allow us to address the case of persistent perturbations
$w$ in (\ref{NL}), when for an input $\|w\|_{\infty,\infty} \leq 1$, we measure the response by the same signal norm $\|G\ast w\|_{\infty,\infty}$.
For test vectors $\xi,\eta$ and a one-dimensional signal $u$
we get from (\ref{young})
$$
|\xi^TC (sI-A)^{-1}B\eta \, u(s)| \leq \|e^{-st}\|_\infty \|\xi^TCe^{At}B\eta\|_1 \|u\|_1 =   \|\xi^TCe^{At}B\eta\|_1 \|u\|_1.$$
Letting   the scalar
signal $u(t)$ of unit $L_1$-norm approach the $\delta$-distribution,
we get
$$
|\xi^T C(sI-A)^{-1} B\eta| \leq \|\xi^TCe^{At}B\eta\|_1 = \int_0^\infty |\xi^TCe^{At}B\eta| dt.
$$
Now let $m$ be the number of outputs, $p$ the number of inputs, and let $g_{ik}(t)$ be the entries of the matrix
$Ce^{At}B$, then with $\|\xi\|_2 \leq 1$ and $\|\eta\|_2 \leq 1$ we get
\begin{align*}
\int_0^\infty |\xi^TCe^{At}B\eta| dt &= \int_0^\infty\left| \sum_{i=1}^m \sum_{k=1}^p \xi_i g_{ik}(t) \eta_k \right| dt\\
&\leq \sum_{i=1}^m |\xi_i| \sum_{k=1}^p|\eta_k|  \int_0^\infty | g_{ik}(t)|  dt  \\
&= \sum_{i=1}^m |\xi_i| \sum_{k=1}^p \| g_{ik}\|_1   |\eta_k| \\
&\leq \left(\sum_{i=1}^m |\xi_i|^2\right)^{1/2} \left( \sum_{i=1}^m  \left(\sum_{k=1}^p \|g_{ik}\|_1 |\eta_k|\right)^2\right)^{1/2}\\
&\leq \left( m \max_{i=1,\dots,m} \left(\sum_{k=1}^p \|g_{ik}\|_1\right)^2\right)^{1/2} = \sqrt{m}  \max_{i=1,\dots,m} \sum_{k=1}^p \|g_{ik}\|_1.
\end{align*}
When we recall that
the time-domain peak-to-peak, or peak-gain, system norm is defined as
$$
\|G\|_{\rm pk\_gn} = \max_{u\not=0} \frac{\|G\ast u\|_{\infty,\infty}}{\|u\|_{\infty,\infty}}= \max_{i=1,\dots,m} \sum_{j=1}^p \|g_{ij}(t)\|_1,
$$
then
we have shown the estimate $\|G\|_\infty \leq \sqrt{m} \|G\|_{\rm pk\_gn}$ for a system $G$ with $m$ outputs.  
The estimate remains true for more general systems with $g_{ik} \in L^1([0,\infty),\mathbb R^n)$, and even for 
($m \times p$)-valued Radon measures, allowing to include the case of direct transmissions. In the matrix case $\|\cdot\|_{\rm pk\_gn}$
reduces to the maximum row sum norm, i.e. the induced matrix norm $\ell_\infty-\ell_\infty$.

Let us now look at the reverse estimate, which is analogous to the right-hand estimate in the Kreiss matrix theorem
(Theorem \ref{KreissUB}). 
Consider a
stable finite-dimensional strictly proper system.
\begin{align*}
G: \quad
\left\{ \begin{array}{lll}
\dot{x} &= Ax + Bu\\
y &= Cx
\end{array} \right.
\end{align*}
where $G(t) = C e^{At}B$. Let $g_{ij}(t) = c_i e^{At}b_j$, where $c_i$ is the $i$th row of $C$, $b_j$ the $j$th column of $B$, then
$\|G\|_{\rm pk\_gn} =  \max_{i=1,\dots,m} \sum_{j=1}^p \|g_{ij}\|_1=\sum_{j=1}^p \|c_ie^{At}b_j\|_1$ for some $i$.

We now relate the peak-gain norm to the Hankel singular values of $G$.
The following was proved in the SISO case $p=m=1$ in \cite[Thm. 2]{boyd_doyle} for  discrete systems,
and in \cite[pp. 11-12]{mil} for continuous SISO systems, where in the latter reference the idea of proof is attributed to I. Gohberg.

\begin{lemma}
\label{sigmas_lemma}
Let $G$ be real-rational, strictly proper and stable, with $p$ outputs and McMillan degree $n$.
Then
\begin{equation}
\label{sigmas}
\|G\|_{\rm pk\_gn} \leq 2{p}^{1/2} \left( \sigma_{H1} + \dots + \sigma_{Hn}  \right), 
\end{equation}
where $\sigma_{Hi}$ are the Hankel singular
values of $G$. In particular, $\|G\|_{\rm pk\_gn} \leq 2n {p}^{1/2} \|G\|_\infty$.
\end{lemma}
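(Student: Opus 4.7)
The plan is to use a Gohberg-style midpoint split of the Hankel kernel in balanced coordinates, combined with a row-wise Cauchy--Schwarz step. First I would pass to a balanced realization of $G$, so that the controllability and observability Gramians coincide with $\Sigma=\mathrm{diag}(\sigma_{H1},\dots,\sigma_{Hn})$, and introduce the vector-valued Schmidt functions
\[
f_k(t):=\sigma_{Hk}^{-1/2}\,Ce^{At}e_k\in\mathbb R^m\ (t\ge 0),\qquad g_k(\tau):=\sigma_{Hk}^{-1/2}\,B^T e^{-A^T\tau}e_k\in\mathbb R^p\ (\tau\le 0).
\]
Using the two Lyapunov identities $A\Sigma+\Sigma A^T+BB^T=0$ and $A^T\Sigma+\Sigma A+C^TC=0$, one checks that $\{f_k\}$ is orthonormal in $L^2([0,\infty);\mathbb R^m)$, that $\{g_k\}$ is orthonormal in $L^2((-\infty,0];\mathbb R^p)$, and that the Hankel kernel admits the Schmidt decomposition $G(t-\tau)=\sum_{k=1}^n \sigma_{Hk}\,f_k(t)g_k(\tau)^T$ for $t>0$, $\tau<0$.

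The crux is then to evaluate this identity on the midpoint diagonal $t=u/2$, $\tau=-u/2$, producing $G(u)=\sum_k\sigma_{Hk}\,f_k(u/2)g_k(-u/2)^T$ for every $u>0$. Applying the triangle inequality for the Frobenius norm on rank-one outer products, followed by AM--GM, yields
\[
\|G(u)\|_F \le \sum_{k=1}^n\sigma_{Hk}\|f_k(u/2)\|_2\|g_k(-u/2)\|_2 \le \tfrac{1}{2}\sum_{k=1}^n\sigma_{Hk}\bigl(\|f_k(u/2)\|_2^2+\|g_k(-u/2)\|_2^2\bigr).
\]
Integrating in $u>0$, the substitutions $t=u/2$, $\tau=-u/2$ together with $\|f_k\|_{L^2}=\|g_k\|_{L^2}=1$ give the Frobenius--Gohberg estimate $\int_0^\infty\|G(u)\|_F\,du\le 2\sum_{k=1}^n\sigma_{Hk}$.

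To finish, I would convert this into the peak-gain estimate: for every output row $i$, Cauchy--Schwarz on $\mathbb R^p$ gives $\|G_i(t)\|_{\ell^1}\le\sqrt p\,\|G_i(t)\|_{\ell^2}\le\sqrt p\,\|G(t)\|_F$, whence
\[
\sum_{j=1}^p\|g_{ij}\|_1 \;=\; \int_0^\infty \|G_i(t)\|_{\ell^1}\,dt \;\le\; 2\sqrt p\sum_{k=1}^n\sigma_{Hk},
\]
and taking the maximum over $i$ proves \eqref{sigmas}. The second bound follows immediately from Nehari's theorem $\sigma_{H1}=\|\Gamma_G\|_{\mathrm{op}}\le\|G\|_\infty$, which gives $\sigma_{Hk}\le\|G\|_\infty$ for each $k$ and hence $\sum_k\sigma_{Hk}\le n\|G\|_\infty$.

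The one delicate step is to verify the explicit Schmidt representation of the Hankel kernel in balanced coordinates, with orthonormality of $\{f_k\}$ and $\{g_k\}$ flowing from the two Lyapunov identities; once this is in hand, the midpoint-split trick (Gohberg's SISO argument lifted to matrix-valued kernels through the Frobenius norm) and the row-wise Cauchy--Schwarz reduction are routine.
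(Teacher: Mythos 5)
Your proof is correct and is essentially the paper's own argument: both rest on Gohberg's midpoint factorization $e^{Au}=e^{Au/2}\,e^{Au/2}$, a Cauchy--Schwarz/AM--GM step, and the identification of the resulting Gramian traces with $\sigma_{H1}+\dots+\sigma_{Hn}$ in a balanced realization, with $\sigma_{H1}\leq \|G\|_\infty$ yielding the second bound. The differences are only in packaging: the paper fixes the maximizing output row from the start and applies Cauchy--Schwarz directly via trace/vec identities together with $c_i^Tc_i\preceq C^TC$ (balancing only at the end), whereas you balance first, set up the Schmidt-type rank-one decomposition of the kernel, and pass through the intermediate estimate $\int_0^\infty\|G(u)\|_F\,du\leq 2\sum_{k=1}^n\sigma_{Hk}$ before the row-wise $\sqrt{p}$ reduction --- the constants agree exactly.
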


\begin{proof}
We have for the $i\in \{1,\dots,m\}$ where the maximum is attained
\begin{align*}
\|G\|_{\rm pk\_gn} &= \sum_{j=1}^p \|c_i e^{At} b_j\|_1 
= 2 \sum_{j=1}^p \int_0^\infty  \left|c_i e^{2A\tau} b_j \right| d\tau \\
&=
2 \int_0^\infty \sum_{j=1}^p \left| (e^{{A^T}t} c_i^T)^T(e^{At}b_j) \right| dt\\
&\leq 2 \left(\int_0^\infty \sum_{j=1}^p \|e^{{A^T} t}c_i^T\|_2^2 dt\right)^{1/2} 
\left( \int_0^\infty \sum_{j=1}^p  \|e^{At}b_j\|_2^2 dt  \right)^{1/2}\\
&= 2p^{1/2} \left( \int_0^\infty  {\rm Tr}( e^{A^Tt}c_i^Tc_ie^{At}) dt \right)^{1/2} \left(  \int_0^\infty \sum_{j=1}^p {\rm Tr} (e^{At}b_jb_j^Te^{A^Tt}) dt\right)^{1/2}\\ 
&= 2 {p}^{1/2}  \left( \int_0^\infty {\rm Tr}( e^{A^Tt}c_i^Tc_ie^{At}) dt \right)^{1/2}
\left(  \int_0^\infty  {\rm Tr} (e^{At}BB^Te^{A^Tt}) dt\right)^{1/2}.%
\end{align*} 
Recall that
the observability and controllability Gramians of the system $G$
are
$$
W_o = \int_0^\infty e^{A^Tt}C^TC e^{At} dt, \quad W_c = \int_0^\infty e^{At} BB^Te^{A^Tt} dt.
$$
Now  Tr$\int_0^\infty e^{A^Tt}c_i^Tc_ie^{At} dt \leq {\rm Tr}\int_0^\infty e^{A^Tt}C^TCe^{At} dt$ 
follows from $c_i^Tc_i \preceq C^TC$ by applying a congruence
transformation with $e^{At}$.
Hence
$\|G\|_{\rm pk\_gn}  \leq 2{p}^{1/2}\left[ {\rm Tr}(W_o) {\rm Tr}(W_c) \right]^{1/2}$. Now
if we take a balanced realization, then $W_o=W_c = {\rm diag} (\sigma_{H1},\dots,\sigma_{Hn})$ for the Hankel singular
values $\sigma_{H_1} \geq \dots \geq \sigma_{Hn}$, hence 
$
\|G\|_{\rm pk\_gn} \leq 2\sqrt{p} (\sigma_{H1} + \dots + \sigma_{Hn})
\leq 2 p^{1/2} n \sigma_{H1} \leq 2p^{1/2}n \|G\|_\infty$
for a system without direct transmission. This uses $\sigma_{H1} \leq \|G\|_\infty$.
\hfill $\square$
\end{proof}

Note, however, that
by the Enns-Glover bound we have
$$
\|G\|_\infty \geq \max\{ \overline{\sigma}(D), \sigma_{H1}\}
$$
for the maximum Hankel singular value $\sigma_{H1}$ of $G=(A,B,C,D)$, so our estimate holds also for systems with direct transmission. 
Indeed, if we define the Hankel semi-norm of a system $G$ as
$$
\|G\|_H = \sup\left\{\frac{\|G\ast u\|_{L^2(T,\infty)}}{\|u\|_{L^2[0,T]}}: T > 0, u \in L^2[0,\infty)\right\},
$$
then $\|G\|_H = \sigma_{H1}$ for the maximum Hankel singular value. But with this formulation, it
is immediate  that $\|G\|_H \leq \|G\|_\infty$, when we recall that $\|G\|_\infty$ is the $L^2$-operator norm.

Therefore we get
for a system with direct transmission
$$
\|G\|_{\rm pk\_gn}
\leq \|G-D\|_{\rm pk\_gn} + \normi{D}_\infty
\leq 2p^{1/2}n\sigma_{H1} + p^{1/2} \overline{\sigma}(D) \leq (2n+1)p^{1/2} \|G\|_\infty
$$
using the fact that  $\sigma_{H1} \leq \|G\|_\infty$ and $\overline{\sigma}(D) \leq \|G\|_\infty$.
Here $\normi{D}_\infty$ is the maximum row sum norm $\max_i \sum_j |d_{ij}|$, which is the $\ell_\infty-\ell_\infty$ induced
matrix norm satisfying $\normi{D}_\infty\leq p^{1/2} \overline{\sigma}(D)$. 
Altogether, we have proved the following estimates for the $H_\infty$- and peak-gain norms stated in \cite{mixedApkarianNoll}:
\begin{theorem}
    Let $G$ be a stable real-rational system with $n$ poles, $p$ inputs and $m$ outputs. Then
    $$
    m^{-1/2} \|G\|_\infty \leq \|G\|_{\rm pk\_gn} \leq (2n+1)p^{1/2}\|G\|_\infty.
    $$
\end{theorem}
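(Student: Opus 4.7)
The plan is to assemble the two inequalities from the calculations already carried out in the preceding paragraphs, treating the direct-transmission term separately in the upper estimate.

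For the lower bound $m^{-1/2}\|G\|_\infty \leq \|G\|_{\rm pk\_gn}$, I would reprise the computation from the $p=\infty$, $q=r=1$ case of Young's inequality. Taking test vectors $\xi,\eta$ in the $\ell_2$-unit ball, letting the scalar input approach the $\delta$-distribution in $L^1$, and applying Cauchy--Schwarz to the chain $\sum_i|\xi_i|\sum_k\|g_{ik}\|_1|\eta_k|$, one arrives at $\|G\|_\infty\leq\sqrt{m}\,\|G\|_{\rm pk\_gn}$. Dividing by $\sqrt{m}$ gives the claim. Since no particular structure of the $g_{ik}$ enters beyond $g_{ik}\in L^1$, the estimate is insensitive to the presence of direct feed-through.

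For the upper bound I would handle the strictly proper case first by a direct appeal to Lemma \ref{sigmas_lemma}, which yields
$$\|G\|_{\rm pk\_gn}\;\leq\;2p^{1/2}\bigl(\sigma_{H1}+\cdots+\sigma_{Hn}\bigr)\;\leq\;2np^{1/2}\sigma_{H1},$$
and then use $\sigma_{H1}\leq\|G\|_\infty$, the latter following from the identification of the Hankel semi-norm with $\sigma_{H1}$ and the trivial observation that this semi-norm is dominated by the $L^2$-operator norm $\|G\|_\infty$. To extend to systems with a feed-through term, I would write $G=G_0+D$ with $G_0$ strictly proper, apply the triangle inequality for the peak-gain norm, and bound the constant part via $\|D\|_{\rm pk\_gn}=\max_i\sum_j|d_{ij}|$. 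A single Cauchy--Schwarz step then gives $\|D\|_{\rm pk\_gn}\leq\sqrt{p}\,\overline{\sigma}(D)\leq\sqrt{p}\,\|G\|_\infty$, using the Enns--Glover bound $\overline{\sigma}(D)\leq\|G\|_\infty$. Summing the two contributions yields exactly the factor $(2n+1)p^{1/2}$.

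The main obstacle is already packaged into Lemma \ref{sigmas_lemma}: the Cauchy--Schwarz reduction from $\int_0^\infty|c_ie^{At}b_j|\,dt$ to a product of Gramian traces, followed by passage to a balanced realization so that these traces become sums of Hankel singular values. Once that lemma is granted, everything else is a routine assembly using the triangle inequality together with the standard domination $\sigma_{H1}\leq\|G\|_\infty$ and $\overline{\sigma}(D)\leq\|G\|_\infty$.
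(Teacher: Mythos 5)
Your proposal is correct and follows essentially the same route as the paper: the lower bound is the Young's-inequality ($p=\infty$, $q=r=1$) computation with $\delta$-approximation and Cauchy--Schwarz already carried out before the theorem, and the upper bound combines Lemma \ref{sigmas_lemma} with $\sigma_{H1}\leq\|G\|_\infty$ via the Hankel semi-norm, the split $G=(G-D)+D$ with the triangle inequality, and the estimate $\|D\|_{{\rm pk\_gn}}\leq p^{1/2}\,\overline{\sigma}(D)\leq p^{1/2}\,\|G\|_\infty$ from the Enns--Glover bound. Nothing is missing; your treatment of the feed-through term matches the paper's exactly.
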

A large variety of synthesis experiments based on the peak-gain norm $\|G\|_{\rm pk\_gn}$ has been presented
in \cite{mixedApkarianNoll}, so that our experiments here may focus on $L_1$-disturbances.

\subsection{Noise as perturbation}
\label{sect_noise}
In this section we consider the case $w\in L^2$, $G\ast w\in L^\infty$, where we can rely on \cite{induced_norms}.
Consider for instance
$\|G\|_{(\infty,2) (2,2)} = \lambda_{\max}(C QC^T)$, where $Q \succeq 0$ is the unique solution of the Lyapunov equation
$AQ+QA^T + BB^T=0$. This norm can be optimized directly using a technique similar to
\cite{dao}.

\section*{Organization of the applications}

In the following, we present applications illustrating the use of the Kreiss system norm for both analysis and feedback control design. The methods are general and can be applied to a wide range of nonlinear controlled systems. It should be emphasized that in all tests the results are certified a posteriori, since Kreiss norm optimization itself relies on a heuristic.
The material is organized as follows:

\begin{description}
  \item[Section~\ref{sect-LCA}] 
  Kreiss norm minimization is applied to control nonlinear dynamics with periodic orbits:
  \begin{itemize}
    \item a two-dimensional limit cycle (Section~\ref{sect-brunton}),
    \item a four-dimensional periodic orbit (Section~\ref{sect-brunton2}).
  \end{itemize}

  \item[Section~\ref{sect-ChaosFP}] 
  Nonlinear regimes in the Lorenz model are considered, including chaotic  (Section~\ref{sect-lorenz1}) and fixed points dynamics (Section~\ref{sect-fiedP}). These are investigated using:
  \begin{itemize}
    \item the QC approach (Sections~\ref{sect-QC1}, \ref{sect-QC2}),
    \item Kreiss norm minimization (Sections~\ref{sect-Kreiss12}, \ref{sect-Kreiss22}).
  \end{itemize}
\end{description}

\section{Applications to nonlinear dynamics with periodic orbit attractors \label{sect-LCA}}

\subsection{Study of $2$nd-order dynamics with limit cycle  attractor\label{sect-brunton}}
We start with
the model of Brunton and Noack \cite{brunton2015closed}, which is a low-order illustration of a dynamic mechanisms 
known in oscillator flow, 
observed for instance on a larger scale in Navier-Stokes equations, see also \cite{Taira2017AIAA_Overview}.
Examples of this type include fluid flow around a cavity or a cylinder \cite{leclercq2019linear,illingworth_morgans_rowley_2012}. 
The model is of the form 
\begin{eqnarray}
\label{eq-Brunton1}
\left\{ \begin{array}{lll} \dot x &=& \begin{bmatrix}\sigma_u & -\omega_ u\\\omega_ u & \sigma_u 
\end{bmatrix} x  + B_w w + B u, \;\; x \in \mathbb R^n,\;  n = 2\\
w & = & \phi(x)  \\ 
y & = & C x  \end{array}\right. \,,
\end{eqnarray}
with $B_w := I$, $B:= [0 \,\, g]^T$ $C:=[0 \,\, 1]$, 
$$
\phi(x):=  \alpha_u \| x \|^2 \begin{bmatrix}-\beta_u & -\gamma_ u\\\gamma_ u & -\beta_u  
\end{bmatrix} x \,,
$$
and  $\alpha_u, \beta_u > 0$. 
Signals $u$ and $y$ are control input and measured output, respectively. It is easy to verify that the triple $(A,B,C)$ is stabilizable and detectable. 

Unlike  transitional amplifier  flows, oscillator flows are 
characterized by an unstable fixed point at the origin and a globally attractive limit cycle,
here with radius $\sqrt{\sigma_u/\alpha_u \beta_u }$. This is shown in Fig. \ref{fig-BruntonOL}  
for two initial conditions inside and outside the asymptotic limit cycle for data $\alpha_u=1$, $\beta_u=1$, $\omega_u=1$, $\gamma_u=0$,  $\sigma_u=0.1$ and $g=1$. 

The goal is to compute a feedback controller $u = K(s) y$ with two main design requirements. 
Firstly, $K$ has to stabilize the origin, often called the base flow. 
Secondly, trajectories trapped in the limit cycle should be driven back to the origin with limited oscillations.  
Additional insight into this model in terms of fluid flow interpretation can be found in \cite{brunton2015closed}. 

In order to
mitigate the effects of nonlinearity, we minimize the Kreiss system norm in closed loop. 
This leads to the following min-max constrained program
\begin{align}
\label{eq-synth1}
\begin{array}{ll}
\displaystyle\mbox{minimize} & \displaystyle\max_{\delta \in [-1,1]} \left\| J^T \left( sI- \left(\textstyle \frac{1-\delta}{1+\delta} A_{cl}(K)-I \right)\right)^{-1} J \right\|_\infty\\
\mbox{subject to}& K \mbox{ robustly stabilizing}, \, K \in \mathscr K \\
& \alpha(A_{cl}(K)) \leq -\eta \\
& \|W(s)GK(I+GK)^{-1}\|_\infty \leq  1 \,.
\end{array}
\end{align}
Here $K\in \mathscr K$ means that the controller has a prescribed structure,  which could be a PID, 
observed-based or low-order controller, a decentralized controller,
as well as any control architecture assembling simple control components. 
The robust stability constraint on $K$ in (\ref{eq-synth1})  demands stability of the entire set of matrices 
$\left\{\frac{1-\delta}{1+\delta} A_{cl}-I: \;\delta \in [-1,1]\right\}$,  and in particular, for $\delta=0$ that of
$A_{cl}(K)$. Matrix $J$ is a restriction matrix to the space of physical plant states since transient amplification of controller states is not relevant.  We have $J:=I_n$ with $n$ the plant state dimension for a static feedback controller and $J:= [I_n,\,0_{n\times n_K}]^T$ for an $n_K$-order  output-feedback controller (see also  example \ref{example7}). %,\ref{example8}).
In terms of dimension, we have  $n=2$, $n_K = 3$ when $K$ is a  $3$rd-order controller, $p_z = 2$ since $z=x$, $m_w=2$ for $w = \phi(z)$, and $p=m=1$. The overall state dimension is therefore $N=n+n_K+n_W =7$ in closed loop for a $3$rd-order controller and $2$nd-order filter $W$. The latter is used to weigh the noise to measurement transfer $T:= GK(I+GK)^{-1}$. 

The notation $\alpha(\cdot)$ refers to the spectral abscissa, and 
the constraint $\alpha(A_{cl})\leq -\eta$ in (\ref{eq-synth1}) therefore
imposes a convergence rate to the origin for the linear dynamics in closed loop. In our experiments we have chosen $\eta = 0.1$.

These constraints  are readily implemented from the closed-loop nonlinear system: 
\begin{eqnarray}
\label{eq-CL1}
\begin{array}{lll}
\dot x_{cl} &=& A_{cl}  x_{cl} +  B_{w,cl} \phi_{cl}(x_{cl}),\quad  x_{cl}:= [x^T, x_K^T]^T\,, \\
    \end{array}
\end{eqnarray}
where  
\begin{eqnarray} 
\label{eq-CL2}
A_{cl} := \begin{bmatrix} A+B D_K C & B C_K \\ B_K C & A_K  \end{bmatrix}, \, 
\phi_{cl}(x_{cl}): = \phi(x), \, B_{w,cl}:= J=[I_2 ,0_{2 \times n_K}]^T \,,\end{eqnarray}
and where the controller dynamics are
\begin{eqnarray}
\label{eq-K1}
 \left\{   \begin{array}{lll}
\dot x_K \!\!&\!\!=&\!\! A_K x_K + B_K y, \quad x_K \in \mathbb R^{n_K} \\
 u \!\!&\!\! = &\!\! C_K x_K + D_K y\, \\
    \end{array} \right. \,.
\end{eqnarray}
We exclude high-gain feedback in the high-frequency range by adding a constraint on the complementary sensitivity function $\|WT\|_\infty\leq 1$, where $W$ is a high-pass weighting filter 
$W(s):= (1\text{e}06 s^2 + 1\text{e}04 s + 24.99)/(s^2 + 10000 s + 2.5\text{e}07)$.

Program (\ref{eq-synth1}) was solved for controller orders: $0$, $1$ and $3$. All controllers achieve 
nearly the same Kreiss norm of $1.005$, but differ in terms of the remaining performance/robustness constraints. 
This is seen by plotting  transient amplifications versus time  in Fig. \ref{fig-BruntonTA}.
Peak values $\mathcal M_0(J^T (sI-A_{cl})^{-1} J)$ are all close to $1.10$ with $\|J^T J\| = 1$ as lower bound, see Proposition \ref{bound}. 

The static controller $K = -0.20$ gives a spectral abscissa of $\alpha(A_{cl})= -1.9899\text{e-}04$ with badly damped modes and a strong roll-off violation of $\|WT\|_\infty = 20.03 $. The $1$st-order controller $K(s)= (0.001071 s - 2.247)/(s + 1.483)$  meets the roll-off constraint and achieves a decay rate constraints $\alpha(A_{cl})= -0.393$.
As expected, the $3$rd-order controller $K(s)=(-0.008068 s^3 - 6.391 s^2 + 83.2 s - 1673)/(s^3 + 27.97 s^2 + 252.8 s + 1333)$ provides the best results in terms of decay rate $\alpha(A_{cl})=-0.811$. 

Simulations in closed loop for identical initial conditions are given in Fig. 
\ref{fig-BruntonCL}. Controllers are switched on at $t=50$ seconds when the limit cycle is well engaged. The static controller leads to a spiral trajectory barely converging to the origin, 
a stint which   is overcome by increasing the controller order. 

Global stability of the origin is established in appendix \ref{appendix-A}.

\begin{figure}[t]
\includegraphics[height=0.3\textheight]{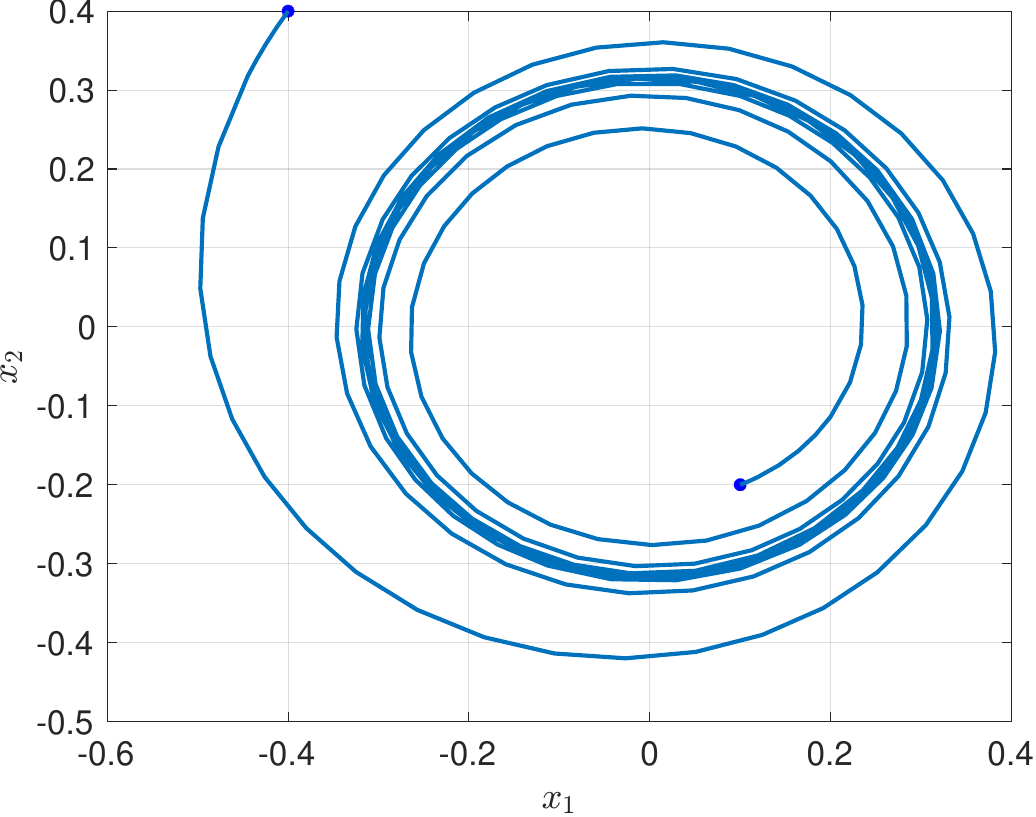}
\centering
\caption{Limit cycle attractor of Brunton and Noack  model\label{fig-BruntonOL} }
\end{figure}

\begin{figure}[!htbp]
\centering
\includegraphics[height=0.25\textheight, width = 0.30\textwidth]{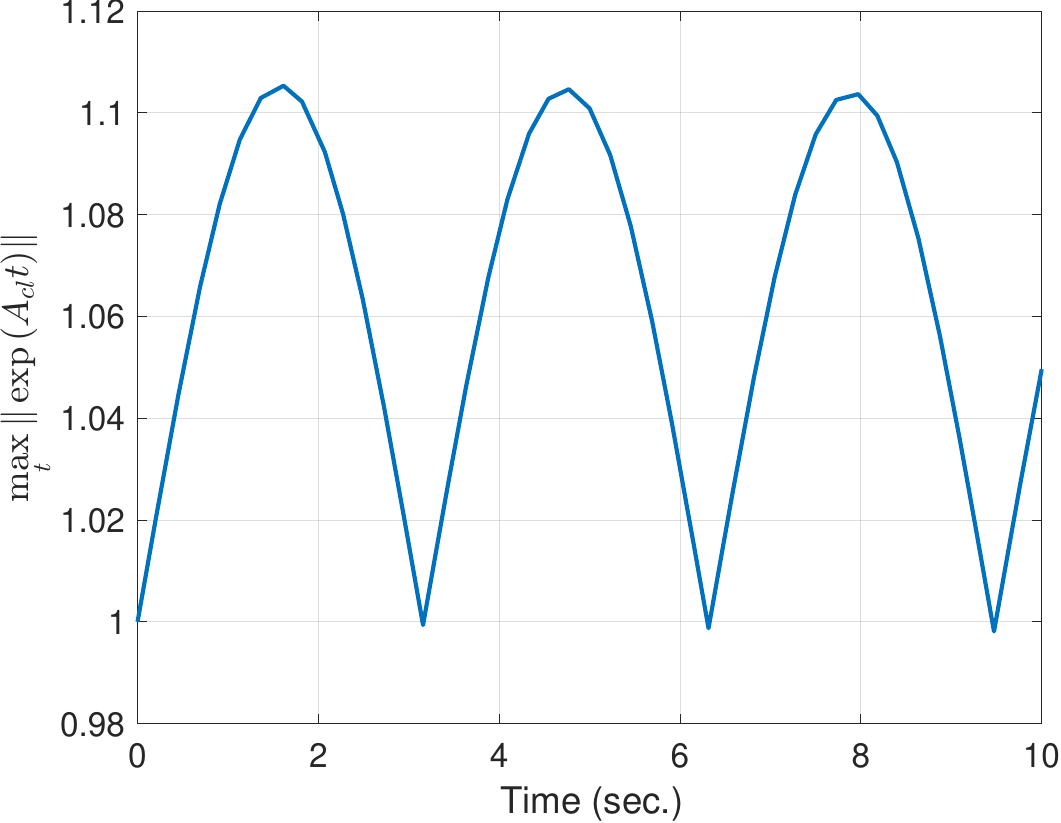}
\includegraphics[height=0.25\textheight, width = 0.30\textwidth]{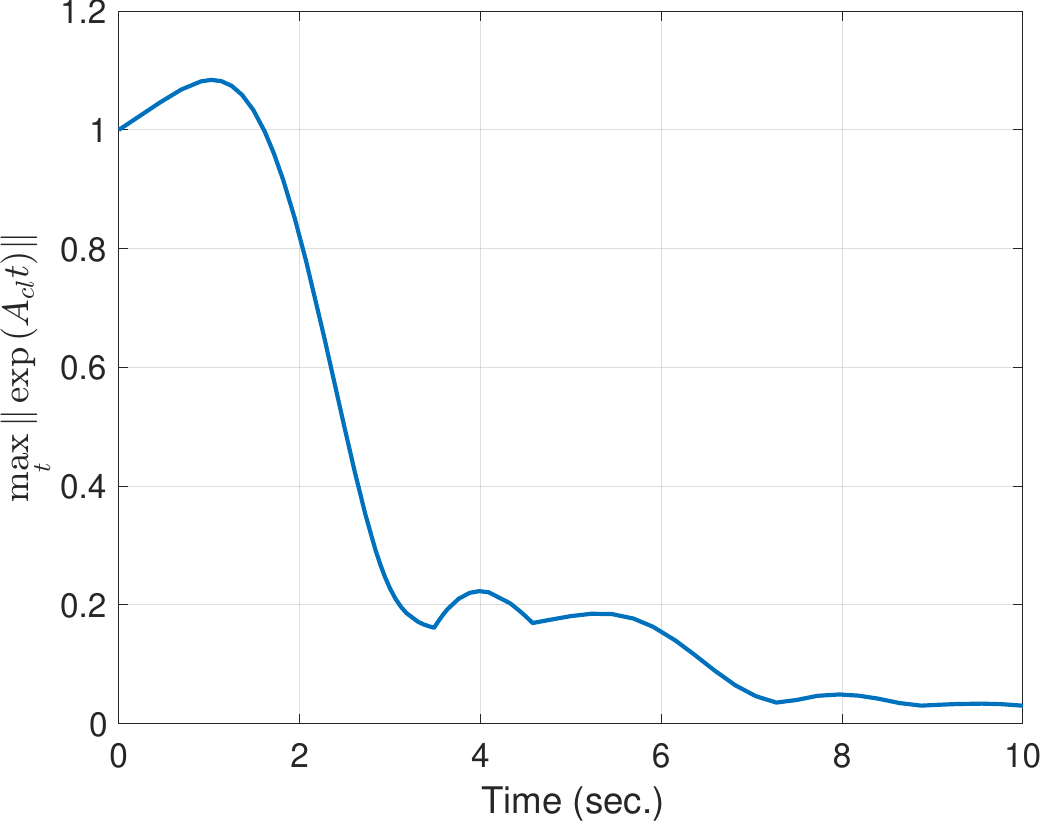} 
\includegraphics[height=0.25\textheight, width = 0.30\textwidth]{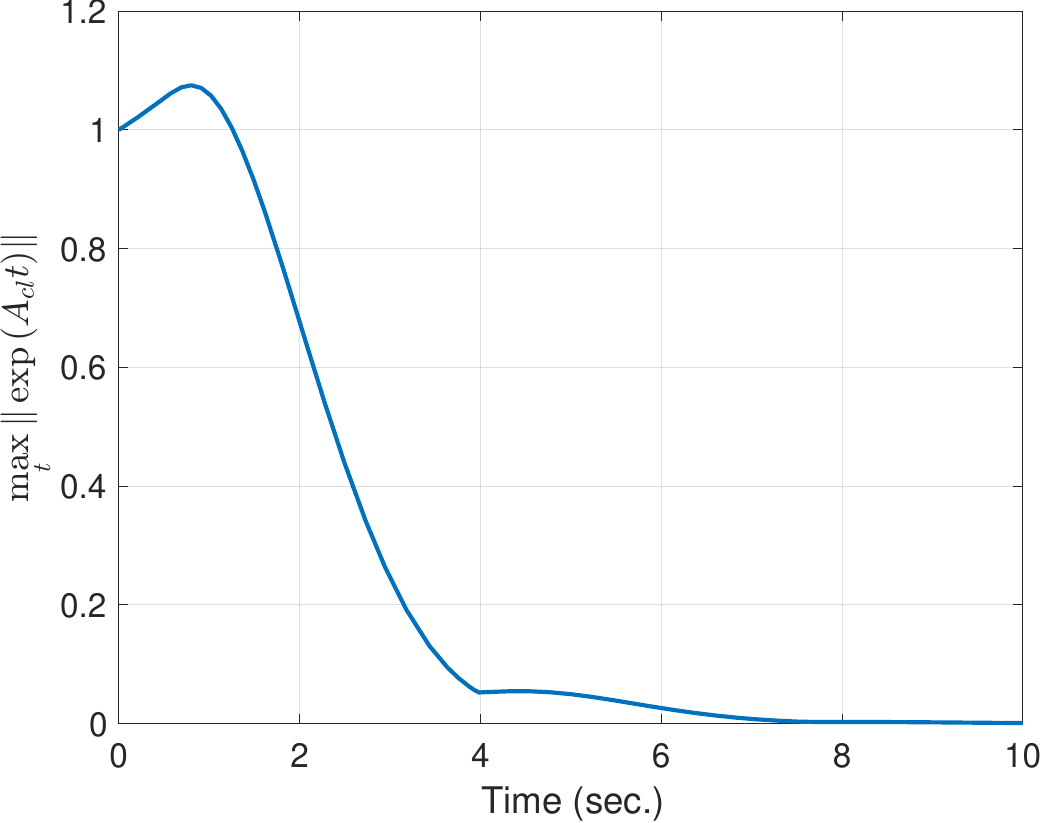}
\caption{From left to right, time evolution of transient amplification for static, $1$st-order and $3$rd-order controller\label{fig-BruntonTA}}
\end{figure}

\begin{figure}[!htbp]
\centering
\includegraphics[height=0.27\textheight, width = 0.325\textwidth]{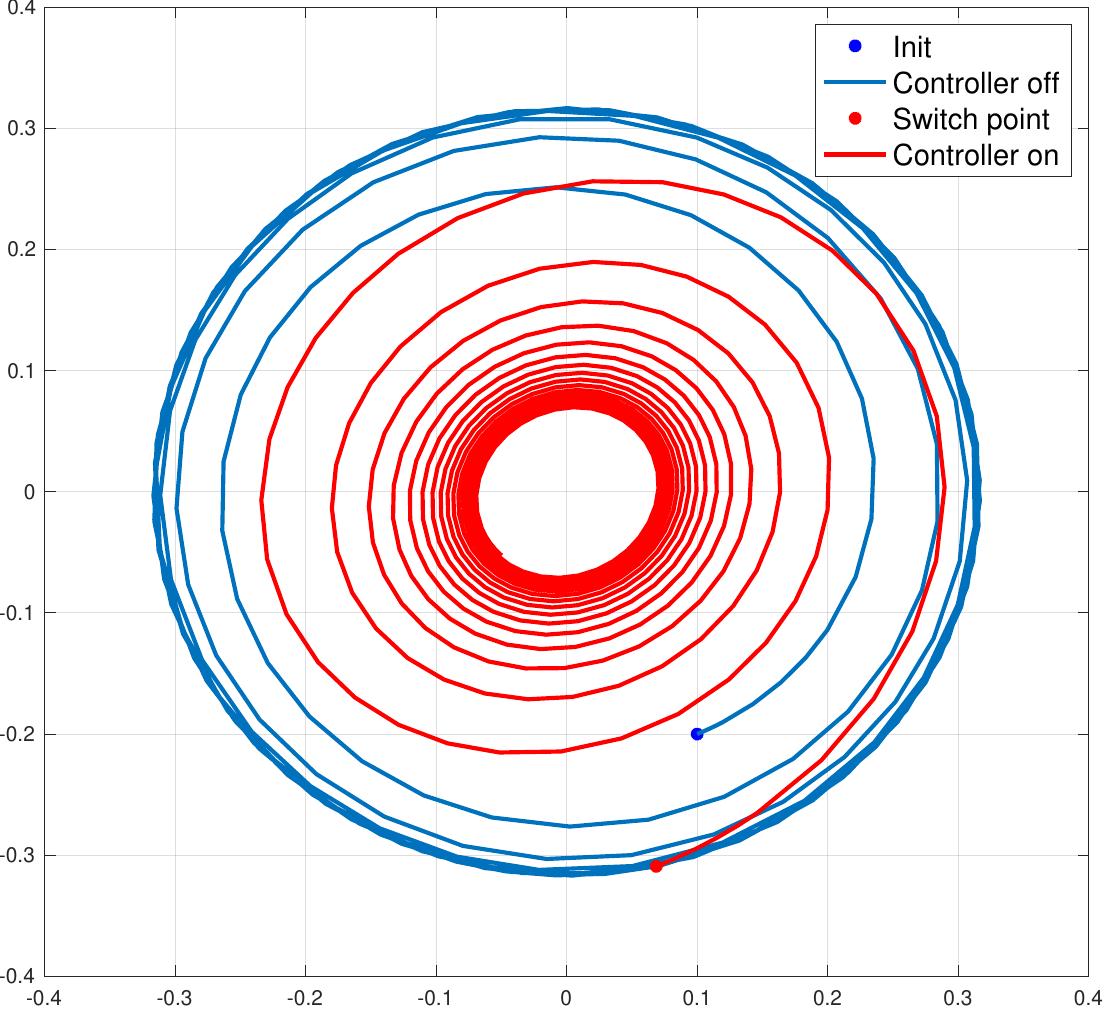}
\includegraphics[height=0.27\textheight, width = 0.325\textwidth]{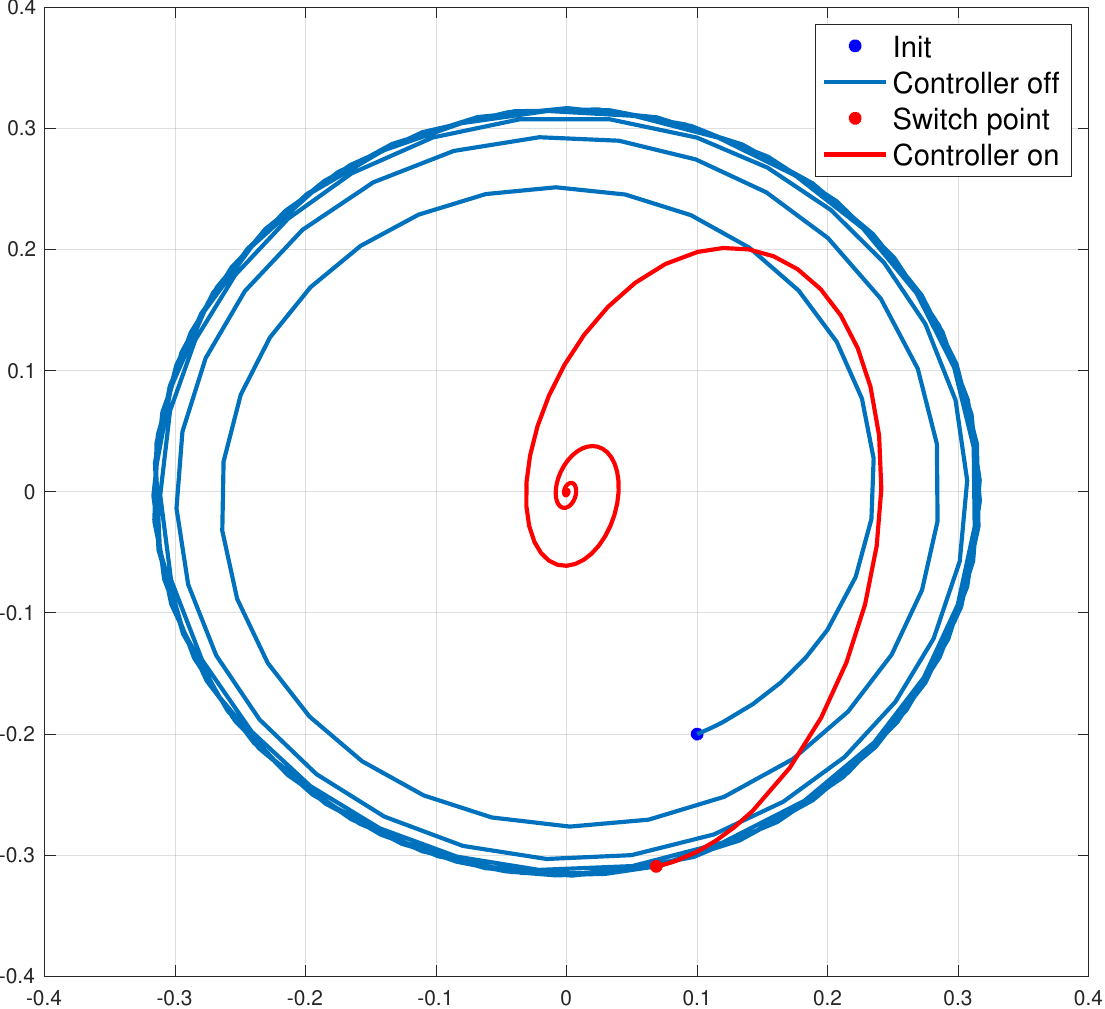} 
\includegraphics[height=0.27\textheight, width = 0.325\textwidth]{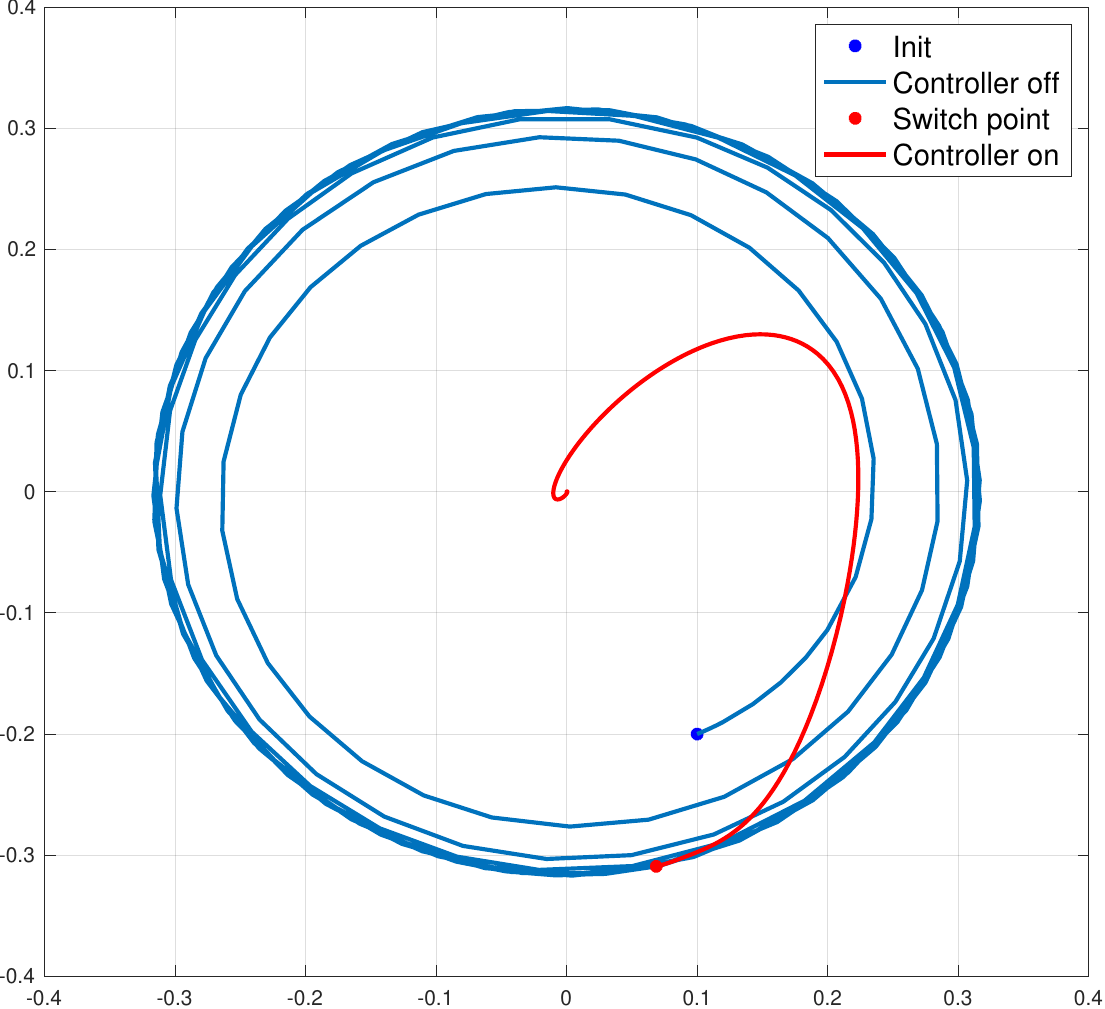}
\caption{From left to right, closed-loop free responses  for static, $1$st-order and $3$rd-order controller\label{fig-BruntonCL}}
\end{figure}

%%%%%%%%%%%%%%%%%
\subsection{Study of fourth-order dynamics with $4D$ periodic orbit attractor. \label{sect-brunton2}}
The fourth-order model of  Brunton and Noack is described as

\begin{eqnarray}
\label{eq-Brunton2}
\left\{ \begin{array}{lll} \dot x &=& A x  + B_w w + B u,  \;\; x \in \mathbb R^{n}, \; n= 4 \\
w & = & \phi(x)  \\ 
y & = & C x  \end{array}\right. \,,
\end{eqnarray}
with 
$$
\phi(x):= (\alpha_u(x_1^2+x_2^2)A_5 + \alpha_a(x_3^2+x_4^2)A_6))x
$$
where
$$
A:= \diag\left(\begin{bmatrix}\sigma_u &  -\omega_u \\ \omega_u &  \sigma_u \end{bmatrix}, \begin{bmatrix}\sigma_a & -\omega_a \\ \omega_a &  \sigma_a \end{bmatrix}\right), \; B_w := I, \, B:= [0\; g\; 0\; g]^T,\, C:= [1\; 0\; 1\; 0],
$$
$$
A_5:= \diag \left(\begin{bmatrix}-\beta_{uu} &  -\gamma_{uu} \\ \gamma_{uu} &  -\beta_{uu} \end{bmatrix}, \begin{bmatrix}-\beta_{au} &  -\gamma_{au} \\ \gamma_{au} &  -\beta_{au} \end{bmatrix}\right), \;
A_6:= \diag\left(\begin{bmatrix}-\beta_{ua} &  -\gamma_{ua} \\ \gamma_{ua} &  -\beta_{ua} \end{bmatrix}, \begin{bmatrix}-\beta_{aa} &  -\gamma_{aa} \\ \gamma_{aa} &  -\beta_{aa} \end{bmatrix}\right),
$$
with data given in \cite{brunton2015closed}. 
In this application, we have  $n=4$,  $n_K= 1$ for a $1$st-order controller, $p_z = 4$ since $z=x$, $m_w=4$ for $w = \phi(z)$, and $p= m=1$. The overall state  dimension is therefore $n+n_K+n_W = 4 + 1 +2 = 7$ for a $2$nd-order filter $W$.  

The open-loop dynamics are characterized by an unstable fixed point at the origin and an attractive $4D$ periodic orbit. 
A $1$st-order controller is computed to minimize the Kreiss norm as in program (\ref{eq-synth1}). The roll-off filter $W$ is unchanged. 
The optimal controller $K(s):=(0.03538 s - 0.5306)/(s + 0.667)$ achieves a Kreiss norm of $1.004$ with decay rate and roll-off constraints all met.  

Despite the apparent complexity of the dynamics, experience shows that it is possible to bring points of the periodic orbit back to the origin with a fairly large class of {\it linear} controllers. As an instance, a controller designed using a mixed-sensitivity approach \cite[p. 141]{ZDG:96} with weight $W_1:=\frac{0.001 s + 5}{s + 0.05}$ for $S$ and $W$ as above for $T$ also drives
points from the periodic orbit to the origin. A $1$st-order controller of this type was obtained as $K(s):=(34.31 s + 168.1)/(s + 32.47)$ with a closed-loop Kreiss constant of $1.54$. Simulations show that closed-loop trajectories undergo large deviations before heading back to the origin. See Fig. \ref{fig-BruntonCL4}.  This remains risky, as attractors when 
still present may capture trajectories. The controller based on the Kreiss norm corrects such undesirable transients as 
corroborated in Figs. \ref{fig-BruntonCL4} and   \ref{fig-Transient4}, where worst-case transients have been plotted. Finally, all controllers globally stabilize the origin and this can be established as was done for the $2$nd-order system in appendix \ref{appendix-A}.

\begin{figure}[!htbp]
\centering
\includegraphics[height=0.35\textheight, width = 1\textwidth]{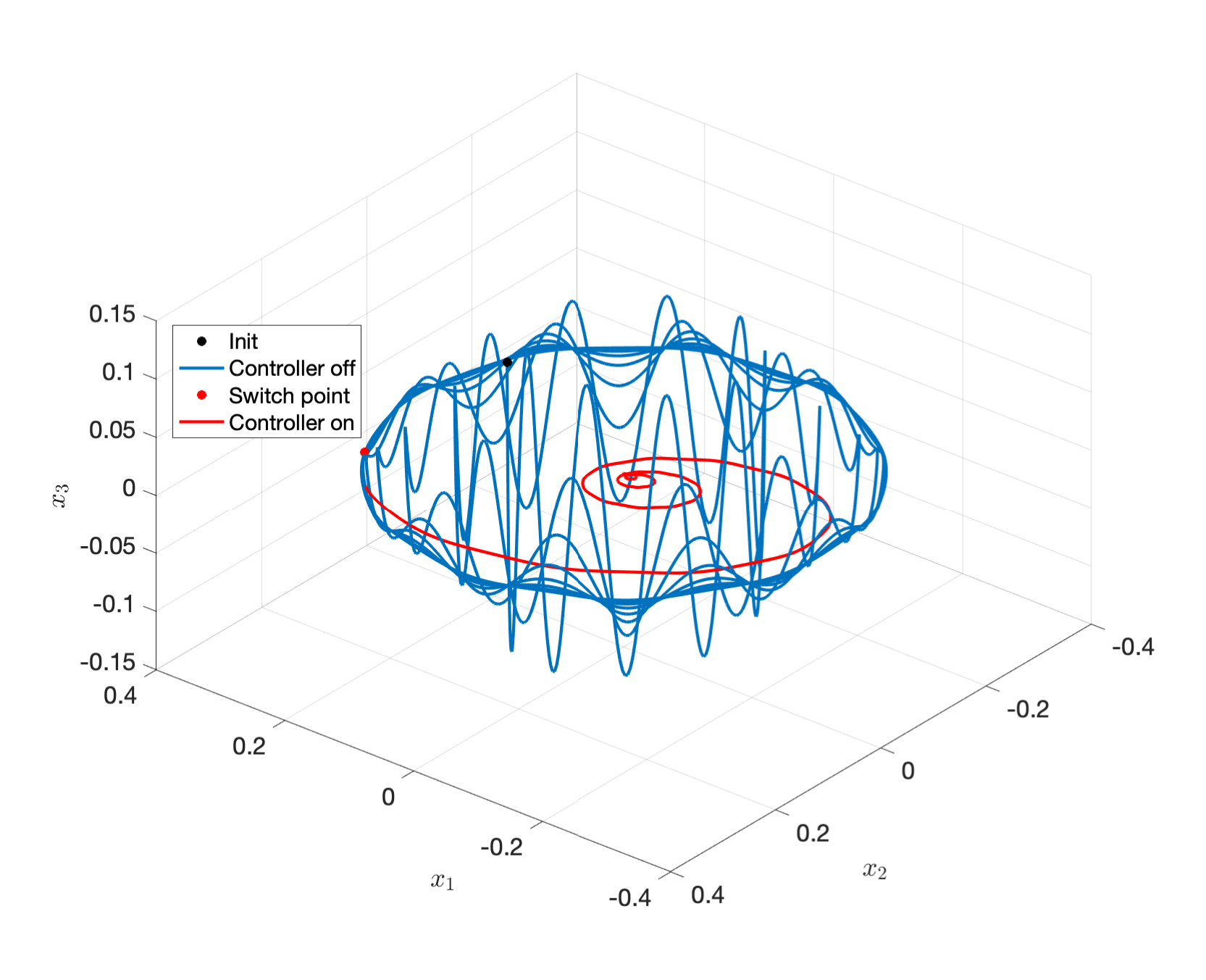}
\includegraphics[height=0.35\textheight, width = 1\textwidth]{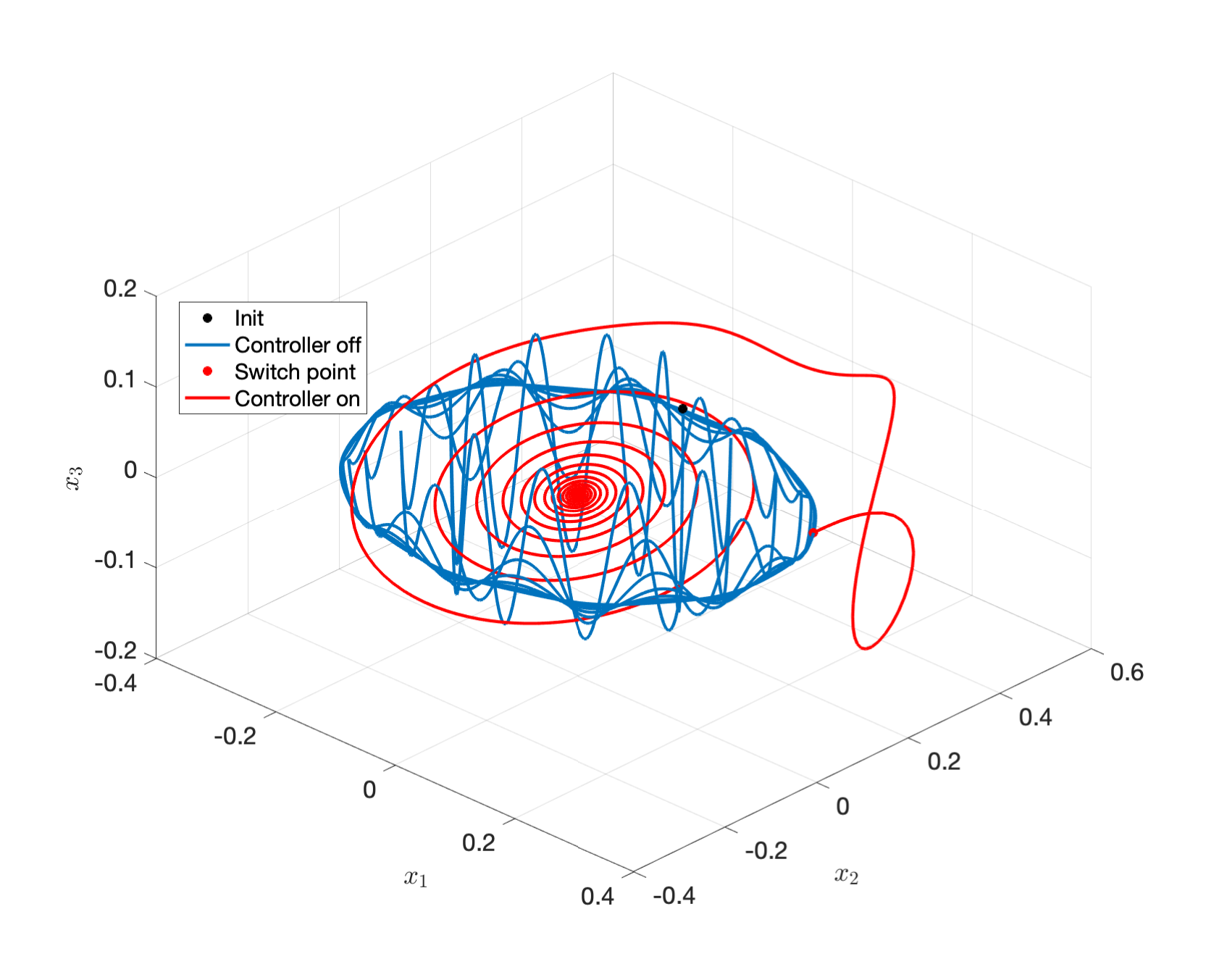}
\caption{Brunton and Noack model. Free open- and closed-loop responses projected in $(x_1,x_2,x_3)$-space with $1$st-order  controllers. Top: Kreiss controller. Bottom: mixed-sensitivity controller. \label{fig-BruntonCL4}}
\end{figure}

\begin{figure}[!htbp]
\centering
\includegraphics[height=0.25\textheight, width = 0.45\textwidth]{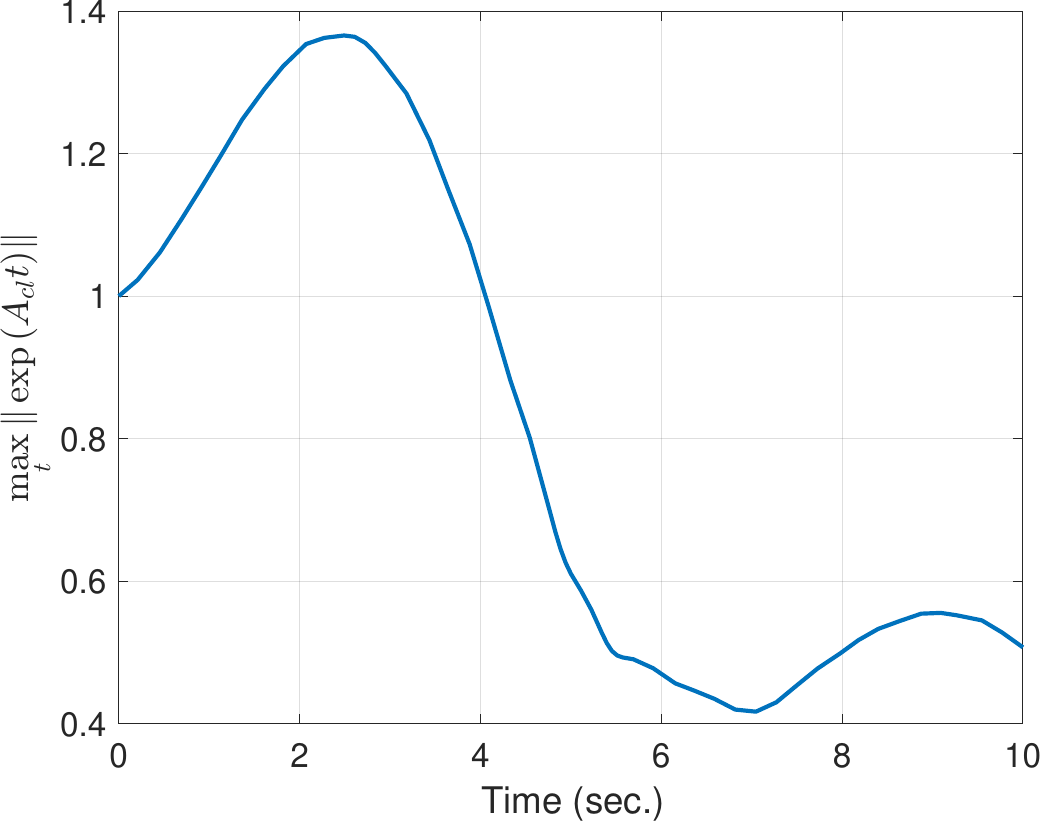}
\includegraphics[height=0.25\textheight, width = 0.45\textwidth]{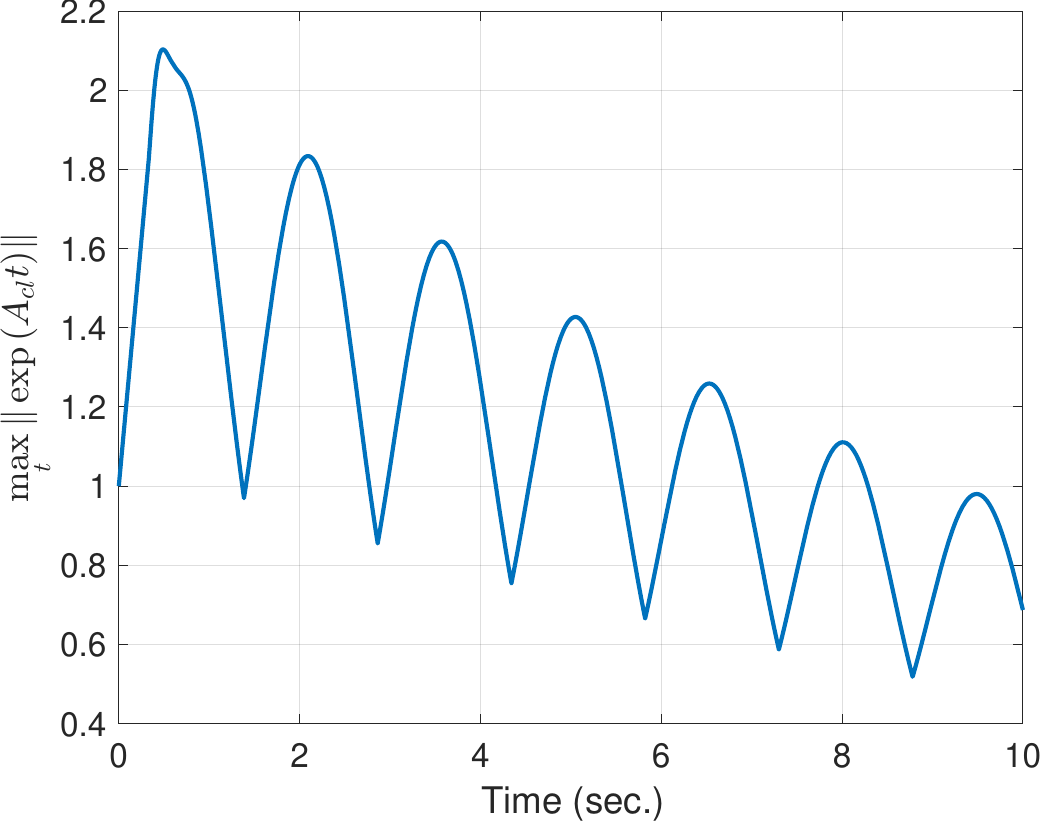}
\caption{Brunton and Noack model. Transient amplifications. Left: Kreiss controller. Right: mixed-sensitivity controller. \label{fig-Transient4}}
\end{figure}

\section{Applications to nonlinear dynamics with chaos and fixed points \label{sect-ChaosFP}}
In this section, we consider suppression of undesirable nonlinear regimes such as chaos and fixed points for the Lorenz model. 

\subsection{Study of the Lorenz system with chaotic attractor\label{sect-lorenz1}}
The Lorenz system \cite{lorenz1963deterministic} has three coupled first-order nonlinear differential equations

\begin{eqnarray}
\label{eq-lorenz1}
 \left\{   \begin{array}{lll}
\dot x_1 &=& p(x_2-x_1) \\
\dot x_2& = &Rx_1 - x_2 -x_1x_3 \\
\dot x_3 &= & - bx_3 + x_1x_2  \,,
    \end{array} \right.
\end{eqnarray}
where $p$, $R$, and $b$ are given parameters. In this study, we will use $p=10$ and $b=1$, while $R$ will be 
varied to illustrate different nonlinear asymptotic regimes. To begin with, we take $R=28$, where the Lorenz model has three unstable fixed points with coordinates  
\begin{equation} \label{eq-fixedPoints}
(0,0,0),\; (\sqrt{R-1},\sqrt{R-1},R-1),\; (-\sqrt{R-1},-\sqrt{R-1},R-1)\,. \end{equation}
For any initial condition $x(0)=x_0$, a repelling effect of these fixed points is observed and trajectories are quickly captured by a chaotic attractor of double-scroll type, shown in Fig. \ref{fig-scroll}. 

\begin{figure}[t]
\includegraphics[height=0.3\textheight]{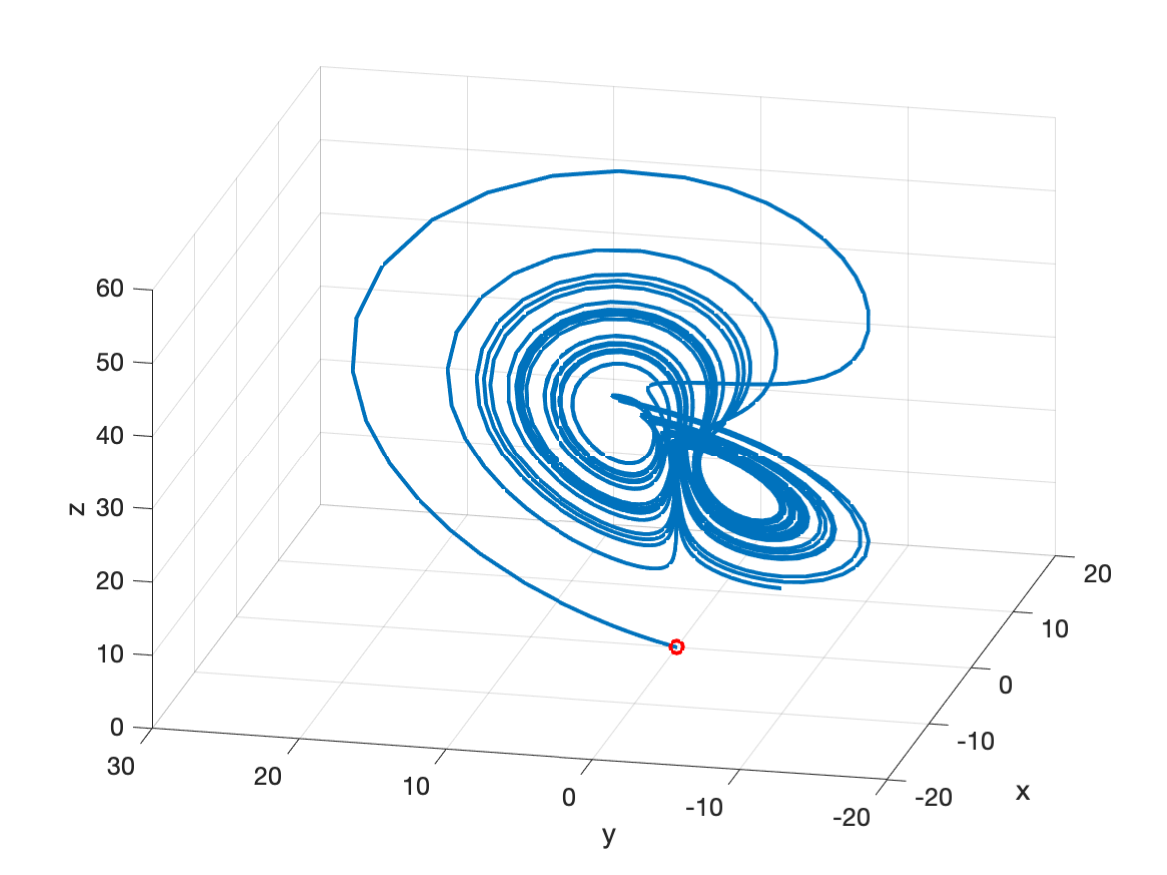}
\centering
\caption{Double-Scroll chaotic attractor of the Lorenz model\label{fig-scroll} }
Response to state initial condition.
\end{figure}

Our feedback goal is  therefore suppression of the chaotic attractor and stabilization of the origin through various feedback control strategies. We complement (\ref{eq-lorenz1}) by adding actuation and sensing,
letting $B=[0,\,1,\,0]^T$ and discussing several cases $C$, where $(A,B,C)$ is stabilizable and detectable. 
The Lorenz model is then rewritten as 
\begin{eqnarray}
\label{eq-lorenz2}
 \left\{   \begin{array}{lll}
\dot x &=& A x + B_w w + B u,\;\;  x \in \mathbb R^{n}, \; n= 3  \\
w & = & \phi(x) \\
y & = & C x \,,
    \end{array} \right.
\end{eqnarray}
where $u$ is the control input, $y$ the measurement output. Matrix $A$ collects the linear terms in (\ref{eq-lorenz1}),   
$\phi(x):=[-x_1x_3,\,x_1x_2]^T$ the nonlinearity, and $B_w:=[0_{2\times 1},I_2]^T$.
As observed before, the origin is unstable in the absence of feedback. 

In this example,  $n = 3$, $p_z = 3$ since $z = x$, $m_w = 2$ for $w = \phi(z)$  and $m=1$. The number of measurements $p$ depends on the control strategy used. We get $p =1$ when a single measurement is used and $p=3$ for full state measurement $y=x$. As before, we also investigate various controller orders $n_K$.

When a linear feedback controller $u = K(s) y$ is used with
\begin{eqnarray}
\label{eq-K}
 \left\{   \begin{array}{lll}
\dot x_K \!\!&\!\!=&\!\! A_K x_K + B_K y, \quad x_K \in \mathbb R^{n_K} \\
 u \!\!&\!\! = &\!\! C_K x_K + D_K y\,, \\
    \end{array} \right.
\end{eqnarray}
the Lorenz model in closed loop becomes: 
\begin{eqnarray}
\label{eq-CL}
\begin{array}{lll}
\dot x_{cl} &=& A_{cl}  x_{cl} +  B_{w,cl} \phi_{cl}(x_{cl}),\quad  x_{cl}:= [x^T, x_K^T]^T\,, \\
    \end{array}
\end{eqnarray}
where  
\begin{eqnarray} 
\label{eq-CLlorenz}
A_{cl} := \begin{bmatrix} A+B D_K C & B C_K \\ B_K C & A_K  \end{bmatrix}, \, 
\phi_{cl}(x_{cl}): = \phi(x), \, B_{w,cl}:=[0_{2\times 1},I_2,0_{2 \times n_K}]^T \,.\end{eqnarray}

\subsubsection{Chaos dynamics: design with the QC approach \label{sect-QC1}}
Here we
assess the stability properties of the closed loop (\ref{eq-CL}) using the Lyapunov Quadratic Constraints (QC) approach 
of \cite{mushtaq2022feedback,kalur2021nonlinear,liu2020input}.

A particularity of the Lorenz system is the so-called {\em lossless property}
\begin{equation}
\label{eq-lossless}
x_{cl}^T B_{w,cl} w = 0 \, \mbox{ for all }\,  x_{cl},  w = \phi(x) \,,
\end{equation}
which holds globally in  state space. 
The QC approach to stability analysis now relies on the existence of a Lyapunov function $V(x_{cl}) = x_{cl}^T X_{cl} x_{cl}$, with $X_{cl}$ a positive definite matrix, such that 
$$\dot V(x_{cl}) \leq -\epsilon V(x_{cl}), \epsilon > 0$$ for all $x_{cl}$, $w$ such that the quadratic constraint in  (\ref{eq-lossless}), when disregarding $w = \phi(x)$, holds. This is clearly a sufficient possibly conservative condition because of the chosen quadratic in $V(x_{cl})$, and also because the specific dependence of $w$ on the states $x$ is ignored.  Using a $S$-procedure argument \cite{kalur2021nonlinear,boyd1994linear} to aggregate the lossless constraint (\ref{eq-lossless}),  this is rewritten as 
$$\dot V(x_{cl}) + \mu_0 x_{cl}^T B_{w,cl} w \leq -\epsilon V(x_{cl})$$ for all $x_{cl}, w$, where $\mu_0$ is a $S$-procedure parameter (sometimes called a Lagrange multiplier), which here is unsigned, as the constraint (\ref{eq-lossless}) is an equality.  The following equivalent  matrix inequality constraints are obtained:   

\begin{equation}
\label{eq-LMI}
\begin{bmatrix}
    A_{cl}^T X_{cl} + X_{cl}  A_{cl} + \epsilon X_{cl} & X_{cl} B_{w,cl} + \mu_0  B_{w,cl}  \\
    B_{w,cl}^T X_{cl} +  \mu_0  B_{w,cl}^T & 0
\end{bmatrix} \preceq 0, \; X_{cl} \succ 0 \,.
\end{equation}

We have the following:

\begin{theorem}\label{theoLMI}
There exist a linear time-invariant controller {\rm (\ref{eq-K})}  such that the sufficient global stability conditions {\rm (\ref{eq-LMI})} hold, if and only if there exist solutions $X=X^T$ and $Y=Y^T$ in $\mathbb R^{(n-n_\phi)\times (n-n_\phi)}$ to the 
following LMIs:
\begin{equation}\begin{split}\label{eq-LMIXY}
    N_C^T\left( A^T \begin{bmatrix}X & 0 \\0 & I \end{bmatrix}+  \begin{bmatrix}X & 0 \\0 & I \end{bmatrix} A + \epsilon \begin{bmatrix}X & 0 \\0 & I \end{bmatrix} \right)N_C \prec 0 \\
    N_B^T\left( A \begin{bmatrix}Y & 0 \\0 & I \end{bmatrix}+  \begin{bmatrix}Y & 0 \\0 & I \end{bmatrix} A^T + \epsilon \begin{bmatrix}Y & 0 \\0 & I \end{bmatrix} \right)N_B \prec 0 \\
    \begin{bmatrix}X & 0 &I & 0 \\0 & I & 0 & I \\I & 0 & Y&0 \\0 & I &0 & I \end{bmatrix} \succeq 0 \,,
\end{split}\end{equation}
where $N_C$ and $N_B$ are bases of the null space of $C$ and $B^T$, respectively.

Moreover, the controller order is determined by the rank of 
$I_{n-n_\phi} - XY$
with $n_\phi$ the vector dimension of the nonlinearity. 
\end{theorem}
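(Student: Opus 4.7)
My plan is to follow the projection/elimination blueprint of Gahinet--Apkarian and Iwasaki--Skelton for output-feedback synthesis via LMIs, adapted to the fact that the $(2,2)$-block of \eqref{eq-LMI} is identically zero, which forces a rigid structural constraint on $X_{cl}$ that has to be exploited before the standard elimination can be carried out.

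First, I would exploit the structural constraint. A negative semidefinite block matrix with zero $(2,2)$-block must have vanishing $(1,2)$-block, hence $(X_{cl}+\mu_0 I)B_{w,cl}=0$. Since the columns of $B_{w,cl}$ consist of $n_\phi$ canonical unit vectors (those indexing the nonlinearity coordinates), this pins the corresponding rows and columns of $X_{cl}$ down to $-\mu_0$ times unit vectors. Partitioning the state as (non-nonlinearity plant) $\oplus$ (nonlinearity plant) $\oplus$ (controller), $X_{cl}$ must therefore be block-diagonal across the nonlinearity coordinates with $-\mu_0 I_{n_\phi}$ on that block; positivity forces $\mu_0<0$, and a rescaling normalizes $-\mu_0=1$. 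This is the origin of the fixed identity summand $\bigl[\begin{smallmatrix}X & 0\\0 & I\end{smallmatrix}\bigr]$ appearing in \eqref{eq-LMIXY}.

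Next, with this special structure in hand, I would write $A_{cl}=\mathcal A_0+\mathcal B_0\,\Omega\,\mathcal C_0$ with $\Omega=\bigl[\begin{smallmatrix}A_K & B_K\\C_K & D_K\end{smallmatrix}\bigr]$, turning the residual inequality $A_{cl}^T X_{cl}+X_{cl}A_{cl}+\epsilon X_{cl}\prec 0$ into an affine condition in $\Omega$ for fixed $X_{cl}$. The elimination lemma then yields two equivalent projected conditions, one in $X_{cl}$ along the null space of $\mathcal B_0$ and a dual one in $X_{cl}^{-1}$ along the null space of $\mathcal C_0$, the dual form obtained through the standard Lyapunov-pair Schur-complement step. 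Because of the block structure of $\mathcal B_0$ and $\mathcal C_0$, these null spaces collapse to extensions of $\ker(B^T)$ and $\ker(C)$ by identity on the controller coordinates, and because of Step~1 the identity block on the nonlinearity coordinates survives in parallel. Setting $X$ and $Y$ to be the upper-left blocks of $X_{cl}$ and $X_{cl}^{-1}$ corresponding to the non-nonlinearity plant states then reproduces exactly the two LMIs of \eqref{eq-LMIXY}.

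The third inequality in \eqref{eq-LMIXY} is the familiar rewriting of $X_{cl}\succ 0$ together with $X_{cl}X_{cl}^{-1}=I$, yielding the coupling $\bigl[\begin{smallmatrix}X & I\\ I & Y\end{smallmatrix}\bigr]\succeq 0$ on the free blocks, with the fixed identity block on the nonlinearity coordinates decorating both sides. Controller reconstruction is then routine: given feasible $(X,Y)$, factor $I_{n-n_\phi}-XY=X_{12}Y_{12}^T$ by SVD, reassemble $X_{cl}$, and solve the now-affine inequality for $\Omega$; the minimal width of $X_{12}$ is $\mathrm{rank}(I-XY)$, giving the announced controller order. The main difficulty I anticipate is the bookkeeping of Step~1 combined with Step~2: making sure that, after pinning $X_{cl}$ by the lossless constraint, the elimination lemma returns projection matrices exactly equal to $N_B$ and $N_C$ of the original plant, rather than the $(n+n_K)$-sized kernels that a blind application would produce.
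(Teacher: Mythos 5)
Your proposal follows essentially the same route as the paper's own proof: you exploit the zero $(2,2)$-block to force $X_{cl}B_{w,cl}+\mu_0 B_{w,cl}=0$, obtaining the structured $X_{cl}$ with identity block on the nonlinearity coordinates after normalizing $\mu_0=-1$ by homogeneity, then eliminate the controller variables via the Gahinet--Apkarian Projection Lemma, and finish with the standard completion argument ($X_{cl}=Y_{cl}^{-1}\succ 0$ giving the coupling LMI and the rank condition $\operatorname{rank}(I_{n-n_\phi}-XY)\leq n_K$ for the controller order). The bookkeeping you flag as the main difficulty --- that the projected kernels collapse to $N_C$ and $N_B$ of the original plant extended by identity on the surviving blocks --- is handled in the paper exactly as you anticipate, by the structured inverse $Y_{cl}$ inheriting the identity middle block and the computations of Gahinet--Apkarian.
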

\begin{proof}
 See appendix \ref{appendix-C}.    
\end{proof}

Note that Theorem \ref{theoLMI} applies to any nonlinear system with similar structure for which (\ref{eq-lossless}) holds.  

For the Lorenz model, we have a loss of rank of at least $n_\phi = 2$, the vector dimension of the nonlinearity. 
For the QC approach this means that controllers  can have order at most $n-n_\phi = 1$. For problems with nonlinearity of dimension $n$, the plant order, 
only static output feedback controllers can be computed. In that case the BMI (\ref{eq-BMI}) reduces to the LMI feasibility problem:
$$(A+BKC)^T + (A+BKC) \prec 0\,,$$
or equivalently, to minimization of the numerical abscissa $\omega(A+BKC)$, 
defined as $\omega(M):= 1/2{\lambda_{\max}}(M+M^T)$. 
This is in line  with the results in \cite{kalur2021nonlinear} for transitional flow studies. 
On the other end, when $n_\phi = 0$, the plant is linear and the controller can be of full order. 
The last step is
construction of the controller given $X$ and $Y$ from (\ref{eq-LMIXY}), which is standard and found in \cite{gahinet1994linear}. 

Application to  the Lorenz model with  $x-1$-measurement, 
yields a $1$st-order controller  $K(s) = -(306.5+2809)/(s+0.1044)$. 
Simulation in closed loop is shown in Fig. \ref{fig-sfxy} (top left corner). The feedback controller is switched on after $15$ seconds, when the chaotic regime is well engaged.

Characterization of state-feedback controllers is easily derived from the second projection LMI in  (\ref{eq-LMIXY}), 
or using $u = K x$ and $C = I$  in the BMI (\ref{eq-LMI}): 
\begin{equation}
\label{eq-sf0}(A+BK)^T \diag(X,  I)  +(.)^T \prec -\epsilon \diag(X,  I),\; X \succ 0 \,,
\end{equation}
or equivalently, using a congruence transformation $\diag(Y,  I ) =  \diag(X,  I ) ^{-1}$, 
on the left- and right-hand sides of the first matrix inequality in (\ref{eq-sf0})
\begin{equation}
\label{eq-sf}(A+BK) \diag(Y,  I ) + (.)^T \prec -\epsilon \diag(Y,  I ),\; Y \succ 0\,.
\end{equation}
 
The
constraint (\ref{eq-sf}) is turned into an  LMI feasibility program using the standard change of variable $V:= K \diag(Y, I )$:
\begin{equation}
\label{eq-sf2}A \diag(Y, I )  + B V + (.)^T \prec -\epsilon \diag(Y,  I ),\; Y \succ 0 \,.
\end{equation}

All
LMI characterizations derived so far can be solved by standard convex SDP software as {\it LMILab}  \cite{MatlabRobust} or {\it SeDuMi}
\cite{sturm1999using}. Solving  (\ref{eq-sf2}) for the Lorenz model yields a globally stabilizing state-feedback controller $K = V \diag(P_{11}, I)= [-154,40   0.245, 0] $. A simulation is shown in Fig. \ref{fig-sfxy} top right.

The fact that the state-feedback controller does not use the $x_3$-measurement suggests that even simpler controller structures 
should be satisfactory, e.g. using 
static output feedback in $x_1$ or $x_2$. For $x_1$-measurement alone, we have $C = [1,0,0]$ and the BMI characterization is the same as in (\ref{eq-sf0}) with $A+BKC$ replacing $A+BK$. For a scalar $K$ this is easily solved by sweeping an interval of $K$ values and solving for the resulting LMIs with $K$ fixed. We obtain $K= -27.01$ with search interval $[-100, 100]$.  Simulations are displayed in Fig. \ref{fig-sfxy}, bottom left.
\begin{figure}[!htbp]   % figure 7
\centering
\includegraphics[height=0.35\textheight, width = 0.45\textwidth]{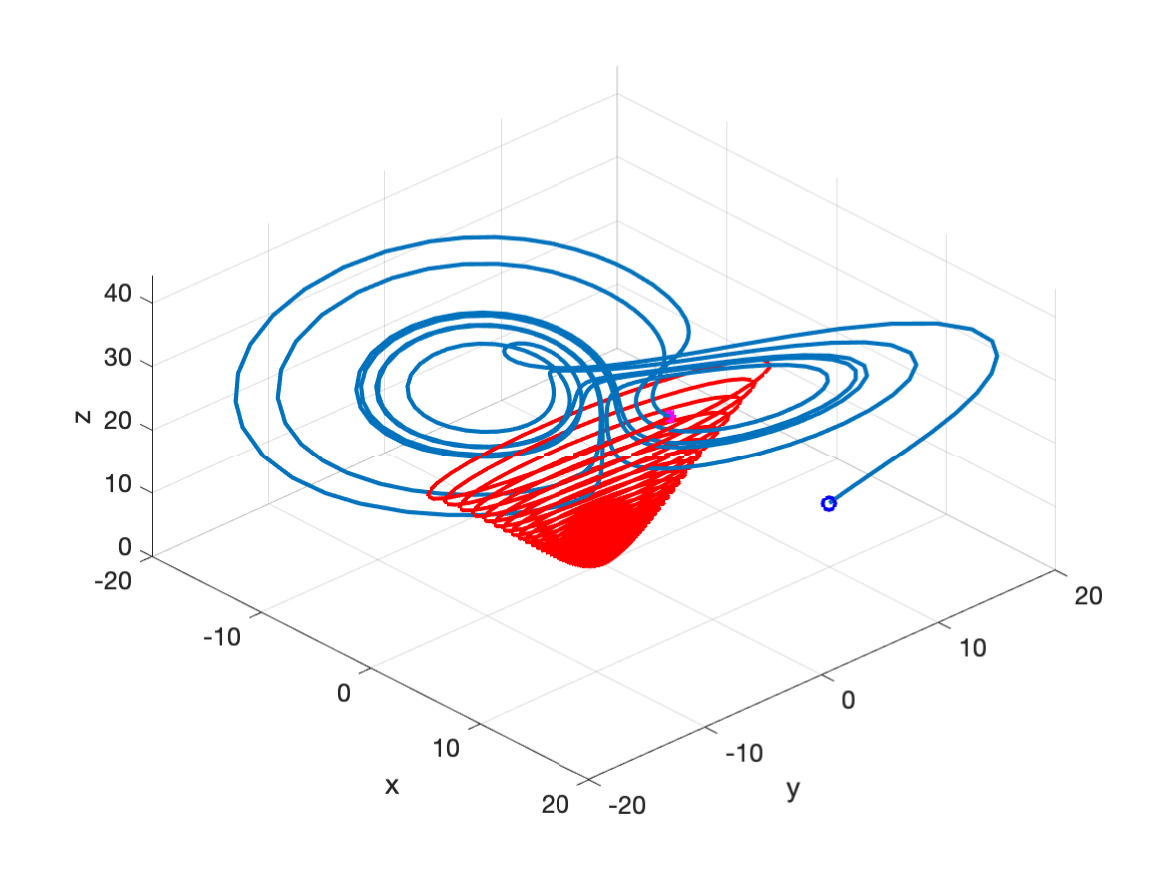}
\includegraphics[height=0.35\textheight, width = 0.45\textwidth]{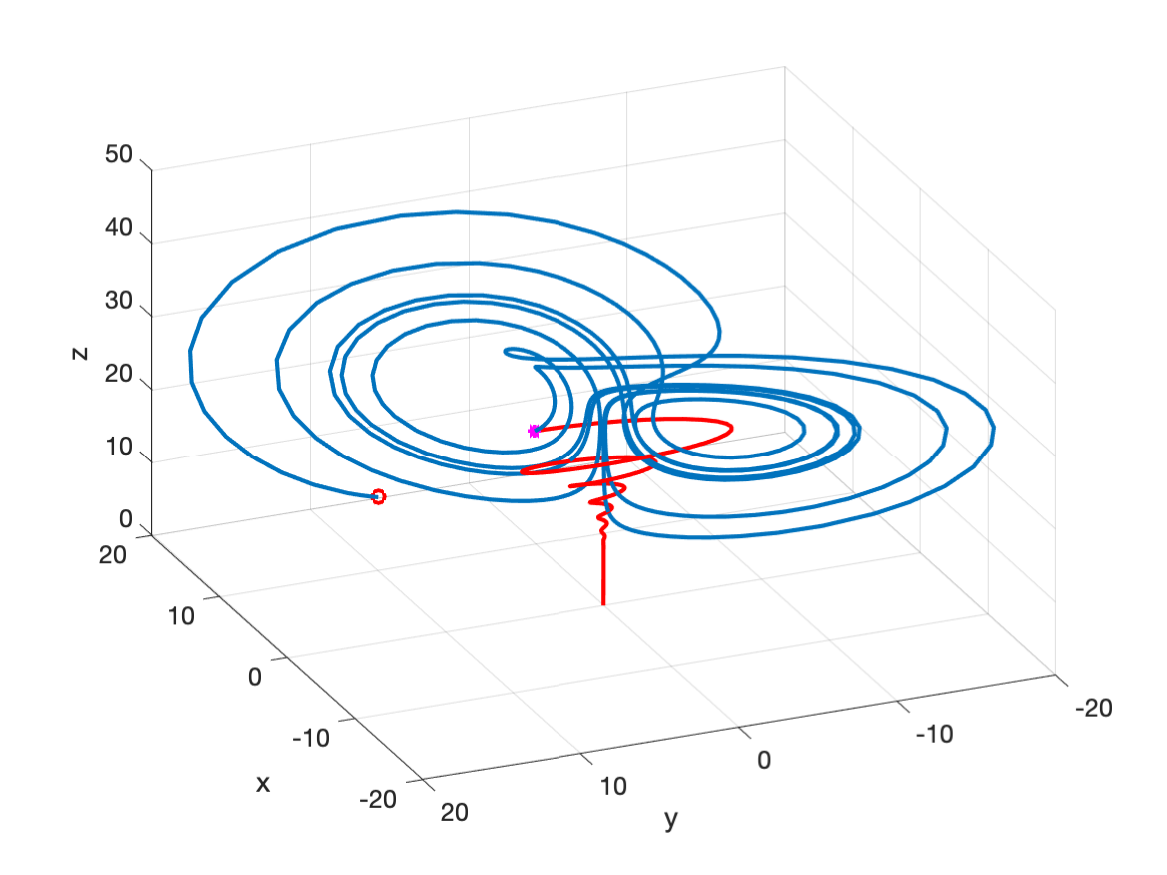} \\
\vspace{-.2cm}
\includegraphics[height=0.35\textheight, width = 0.45\textwidth]{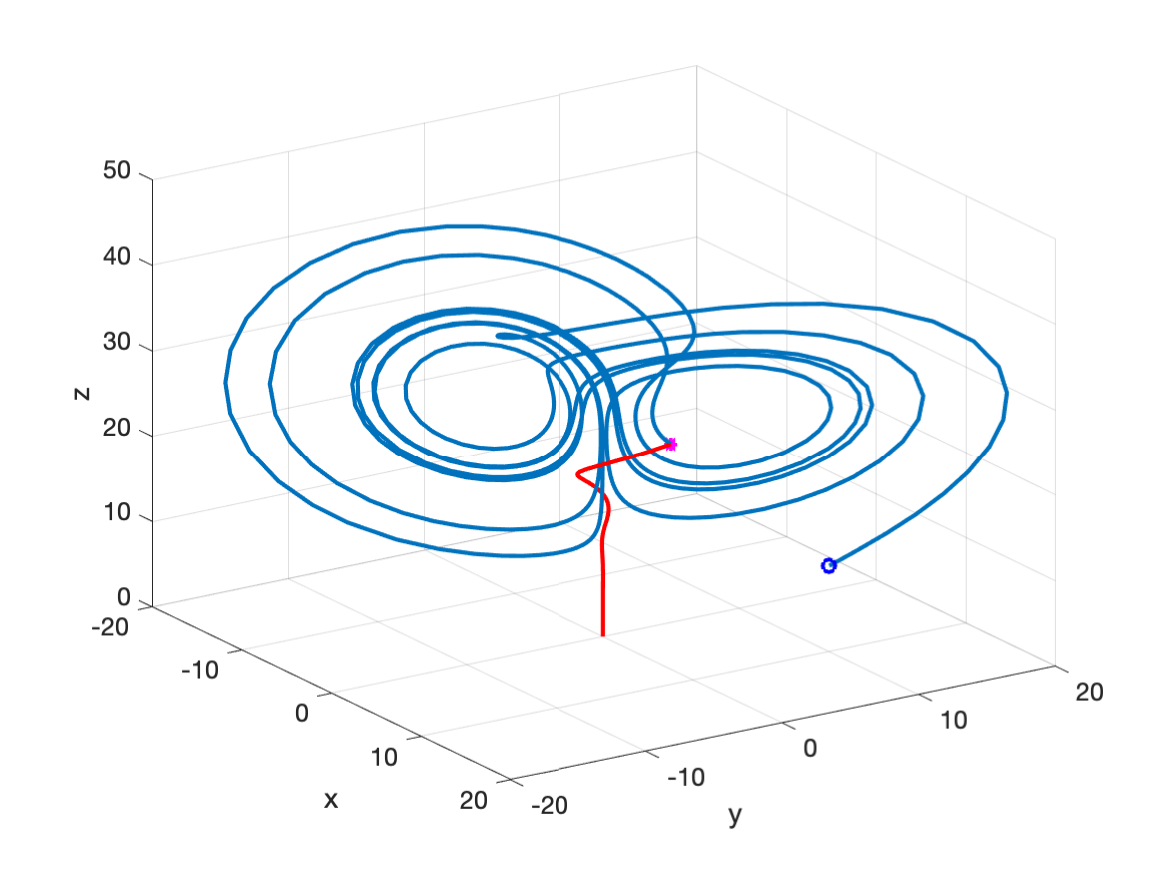}
\includegraphics[height=0.35\textheight,width = 0.45\textwidth]{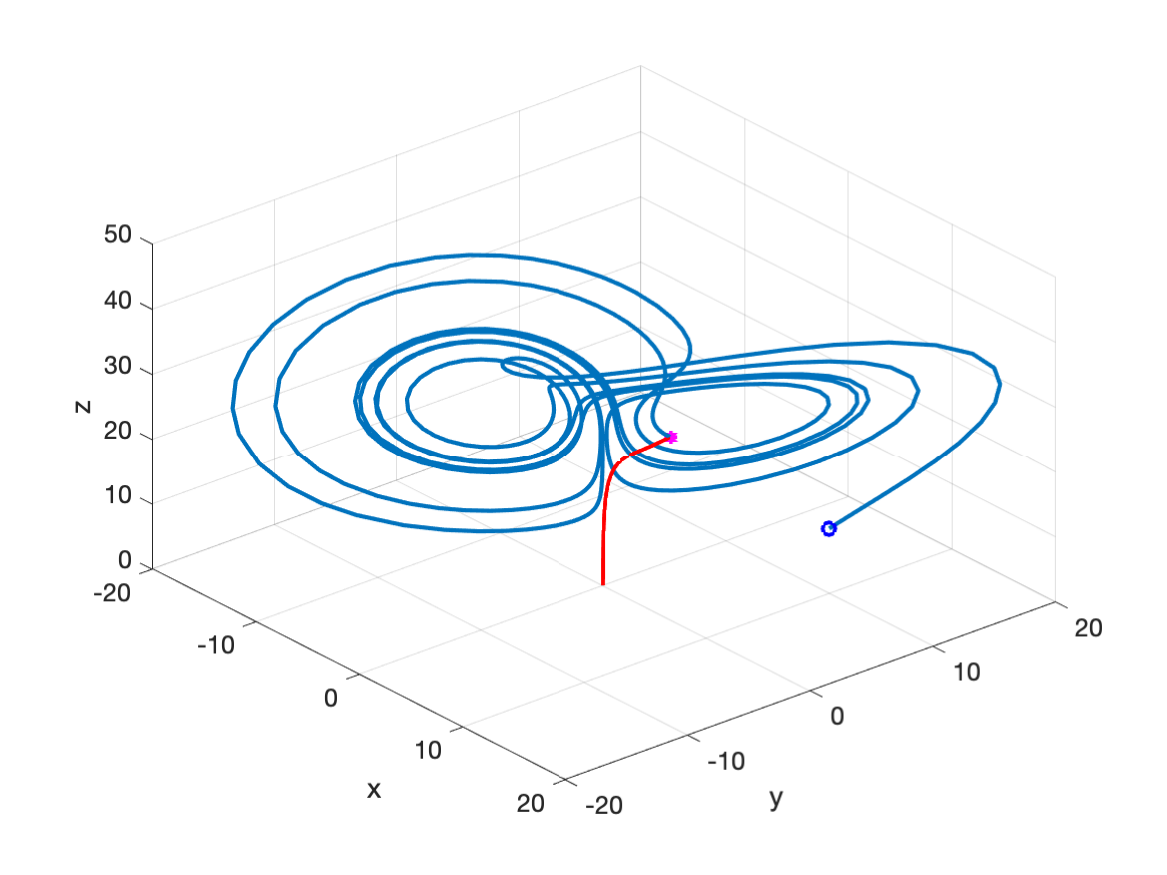}
\caption{Suppression of Lorenz double-scroll chaotic attractor using QC approach\label{fig-sfxy}} Top left: $x_1$-measurement dynamic  feedback, Top right: state feedback \\
Bottom left: : $x_1$-measurement static feedback, Bottom right: : $x_2$-measurement static feedback \\ Open loop in blue: response to state initial condition. Closed loop in red: response when controller is on. \\ 
\end{figure}

Similar results can be obtained with $x_2$-measurement feedback alone. The gain value is  $K = -27.01$, 
and simulations are given in Fig. \ref{fig-sfxy}, bottom right.

\subsubsection{Chaos dynamics: %design with 
Kreiss norm minimization \label{sect-Kreiss12}}
We now investigate
whether similar results can be achieved with controllers minimizing the Kreiss system norm. 
Here we follow a different strategy which is to decouple the linear dynamics 
$\dot x = A x$ from the  nonlinearity $\phi$  by way of
mitigating transients due to initial conditions or $L^1$ disturbances. While this is a heuristic in the first place, 
it can of course in a second step be certified rigorously using the same QC approach, now for analysis. This has the 
advantage
that BMIs are replaced by LMIs.  In addition,  the technique  is applicable in a much more general context beyond the Lorenz model
as seen in sections \ref{sect-brunton} and \ref{sect-brunton2} when the QC approach turns out too conservative. 

Controllers based on minimizing the Kreiss norm alone are computed through the following min-max program
\begin{align}
\label{eqsynth}
\begin{array}{ll}
\displaystyle\mbox{minimize} & \displaystyle\max_{\delta \in [-1,1]} \left\| J^T \left( sI- \left(\textstyle \frac{1-\delta}{1+\delta} A_{cl}(K)-I \right)\right)^{-1} J \right\|_\infty\\
\mbox{subject to}& K \mbox{ robustly stabilizing}, \, K \in \mathscr K,
\end{array}
\end{align}
with the  definitions already given for program (\ref{eq-synth1}).

Program (\ref{eqsynth}) was solved for four controller structures: $x_1$-measurement  dynamic  feedback,  state feedback, static $x_1$-measurement feedback, and   $x_2$-measurement feedback. Controller gains were computed  as $K(s) = -(47.06 s  + 715.7)/(s + 17.95)$,  $[-41.07, -13.78,   0]$ , $-34.70$ and $-32.55$, respectively. In each case a Kreiss constant of unit value 
with $\mathcal M_0(G) =1$ was achieved, meaning that the linear dynamics do no longer amplify 
transients in the Lorenz model. Note that unlike the matrix case,  $\mathcal M_0(G) = 1$ cannot be inferred directly from $\mathcal K(G) = 1$,
but can be certified a posteriori.
Naturally, all controllers have been tested for global stability of the Lorenz model,
which for $K$ fixed 
uses the characterization in (\ref{eq-LMI}) and requires solving a convex SDP. 
Simulations  are given in Fig. \ref{fig-KreissSimu1}.

\begin{figure}[!htbp]   % figure 8
\centering
\includegraphics[height=0.35\textheight, width = 0.45\textwidth]{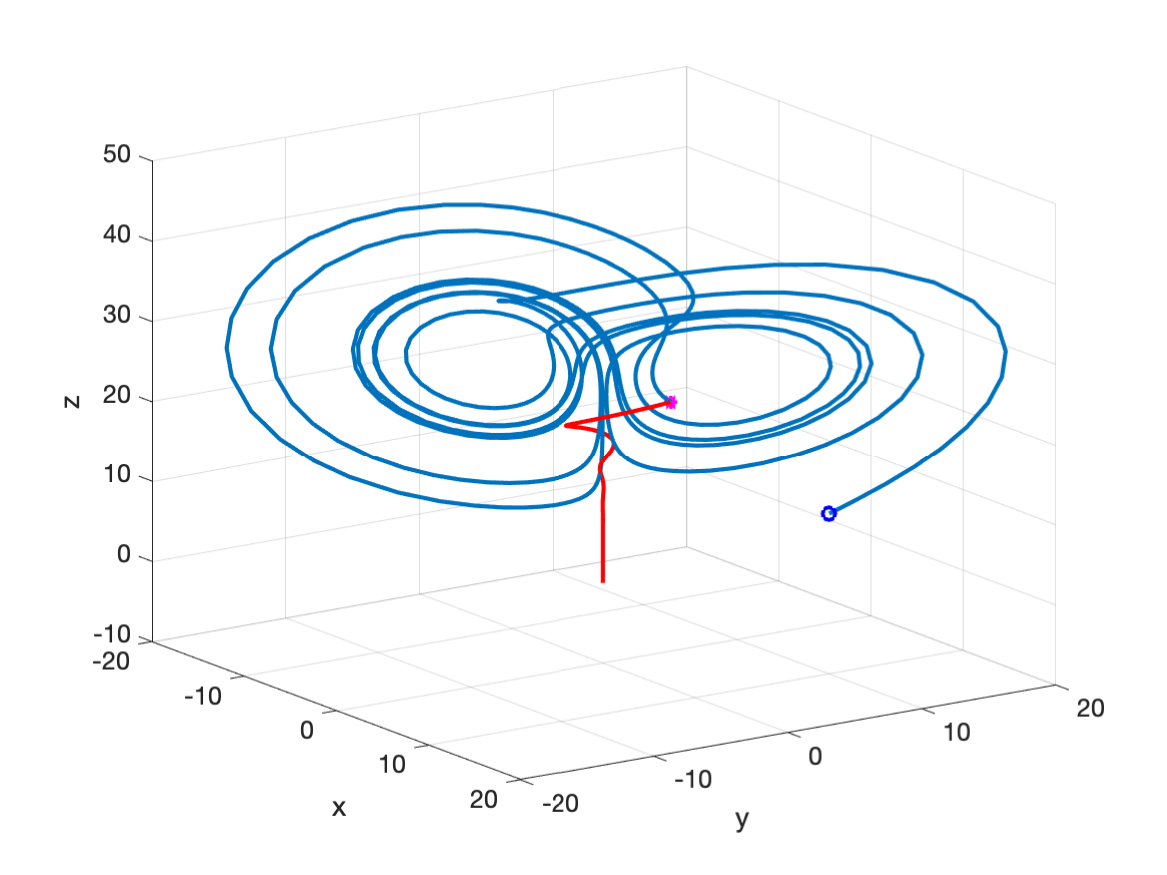}
\includegraphics[height=0.35\textheight, width = 0.45\textwidth]{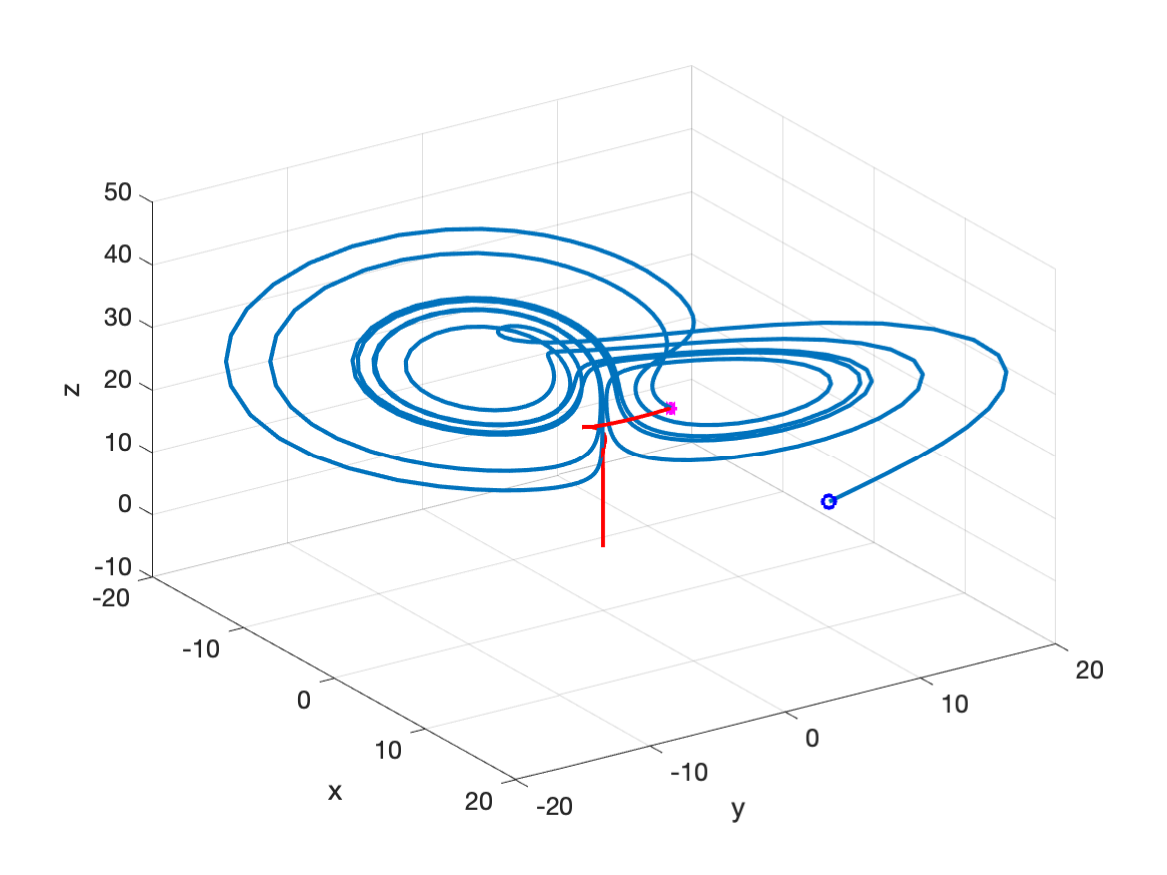} \\
\includegraphics[height=0.35\textheight, width = 0.45\textwidth]{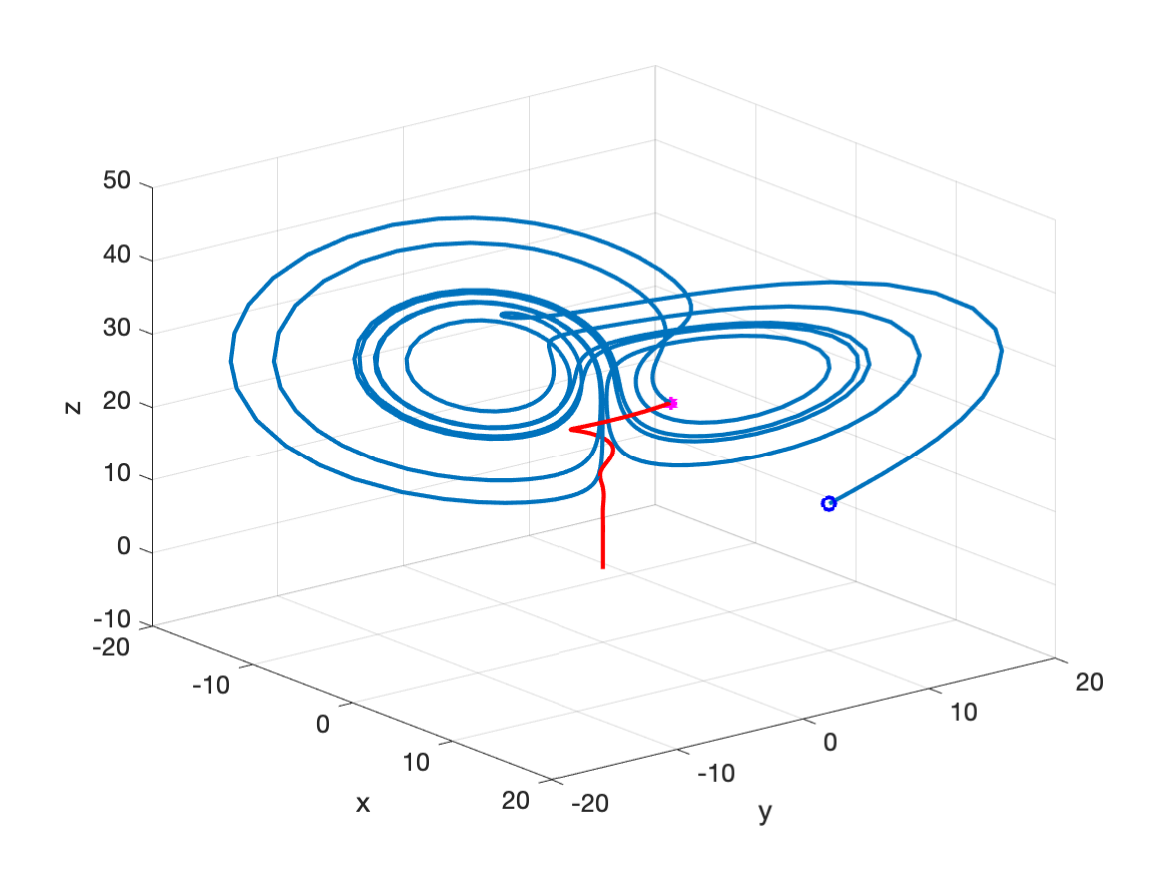}
\includegraphics[height=0.35\textheight,width = 0.45\textwidth]{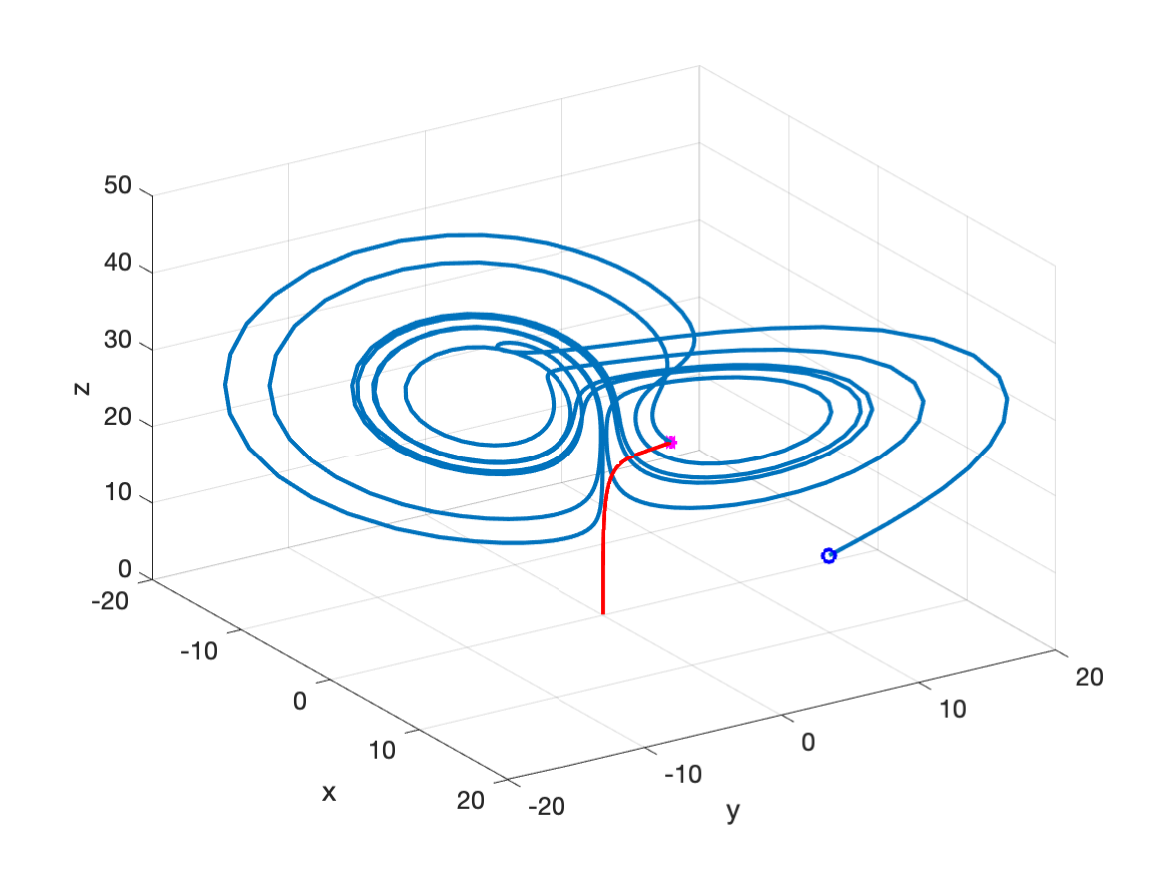}
\caption{Suppression of  Lorenz chaotic attractor using Kreiss norm minimization \label{fig-KreissSimu1}}Top left: $x_1$-measurement dynamic  feedback, Top right: state feedback \\
Bottom left: $x_1$-measurement static feedback, Bottom right: : $x_2$-measurement static feedback \\ Open loop: blue curve, Closed loop: red curve. 
\end{figure}

\subsection{Study of the Lorenz system with fixed points \label{sect-fiedP}}
For $R < 1$, the origin is the only stable equilibrium of (\ref{eq-lorenz1}) and the Lorenz model is then globally stable. 
When the Lorenz parameter is chosen as $1 < R < 17.5$, the chaotic attractor disappears and is replaced with   fixed points. 
For instance, when $R=10$, the Lorenz model has an unstable fixed point at the origin and two stable fixed points given in (\ref{eq-fixedPoints}). A typical illustration of that situation is shown 
in Fig. \ref{LorenzFixedPoints1}.  Trajectories with initial conditions arbitrarily close to $0$ are quickly captured by one of the fixed points. 

Despite this quite different pattern of the attracting regime, synthesis proceeds
along similar lines  as in section \ref{sect-lorenz1}. 
We remove the undesirable fixed points  and stabilize the origin globally
using static state-feedback, and dynamic and static output-feedback,  
comparing QC approach and Kreiss norm minimization. 

\subsubsection{Fixed-point dynamics: design with the QC approach \label{sect-QC2}}
As before, we start with the QC approach. 
A  state-feedback controller was computed as 
$K =[-136.40,  0.24,  0]$. Again the $x_3$ measurement is not used. That leads us to computing static output feedback controllers given as $K = -9.01$ and $K = -9.01$ for the $x_1$ and $x_2$ measurements alone, respectively. A dynamic $1$st-order $x_1$-measurement output feedback controller was computed as 
$K=-(288.5s + 2807)/(s+0.104)$ based on Theorem \ref{theoLMI}. All computed controllers globally stabilize the origin. 
This is illustrated in Fig. \ref{fig2-sfxy} for two initial conditions.

\begin{figure}[!htbp]%[H]   % figure 9
\centering
\includegraphics[height=0.3\textheight]{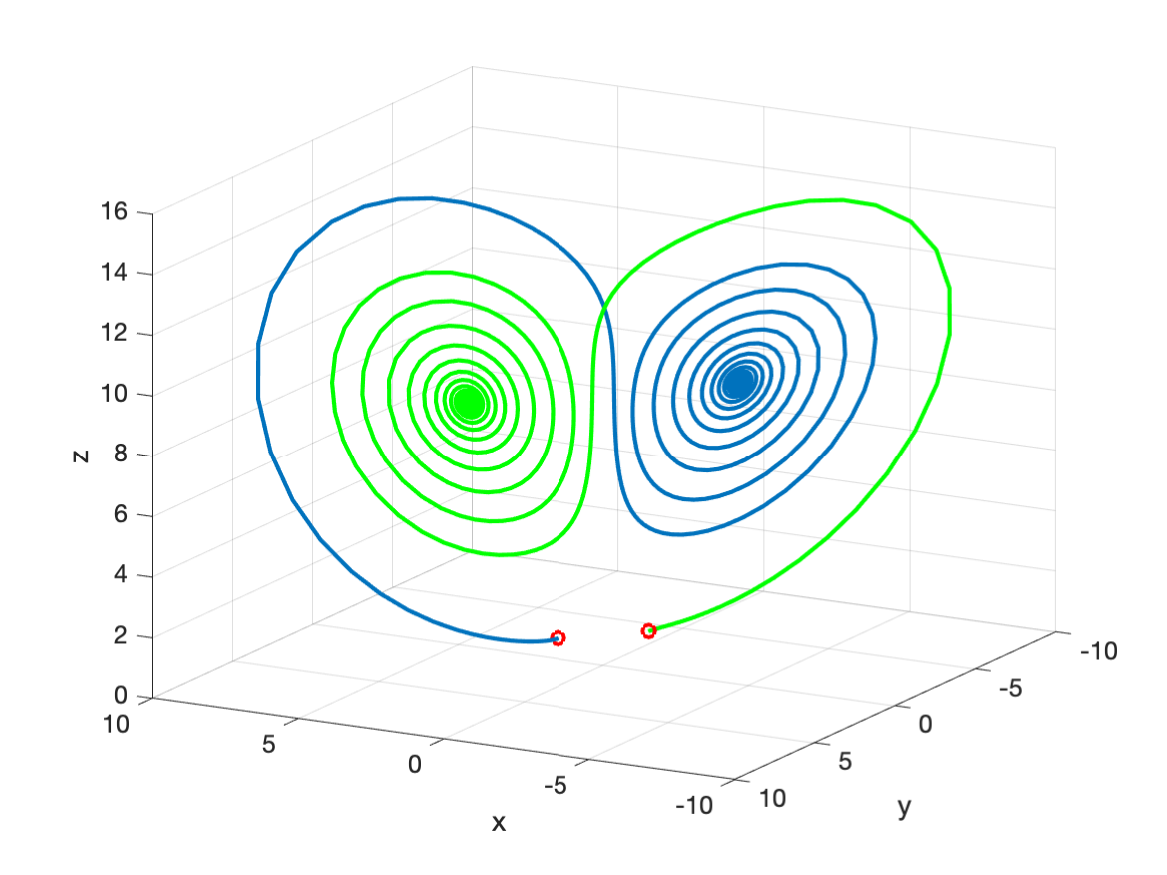}
\caption{Lorenz model for $1 < R < 17.5$ \label{LorenzFixedPoints1}} Unstable origin and two stable  fixed points
\end{figure}

\subsubsection{Fixed-point dynamics:  
Kreiss system norm\label{sect-Kreiss22}}
Controllers with identical structure were computed using Kreiss norm minimization. 
Dynamic  $1$st-order $x_1$-measurement output feedback, full state, $x_1$-measurement  and $x_2$-measurement static feedback  
were obtained as $K=-(12.23s+67.63)/(s+5.541)$, 
$K=[-4.47,  -6.92, 0]$, $K= -26.32$ and $K=-11.53$, respectively. All controllers were certified to stabilize the origin globally through feasibility of the LMI  (\ref{eq-LMI}). Simulations are shown in Fig. \ref{fig2-sfxyKreiss} and should be compared to Fig. \ref{fig2-sfxy}.

\begin{figure}[!htbp]%[H]
\centering
\includegraphics[height=0.35\textheight, width = 0.45\textwidth]{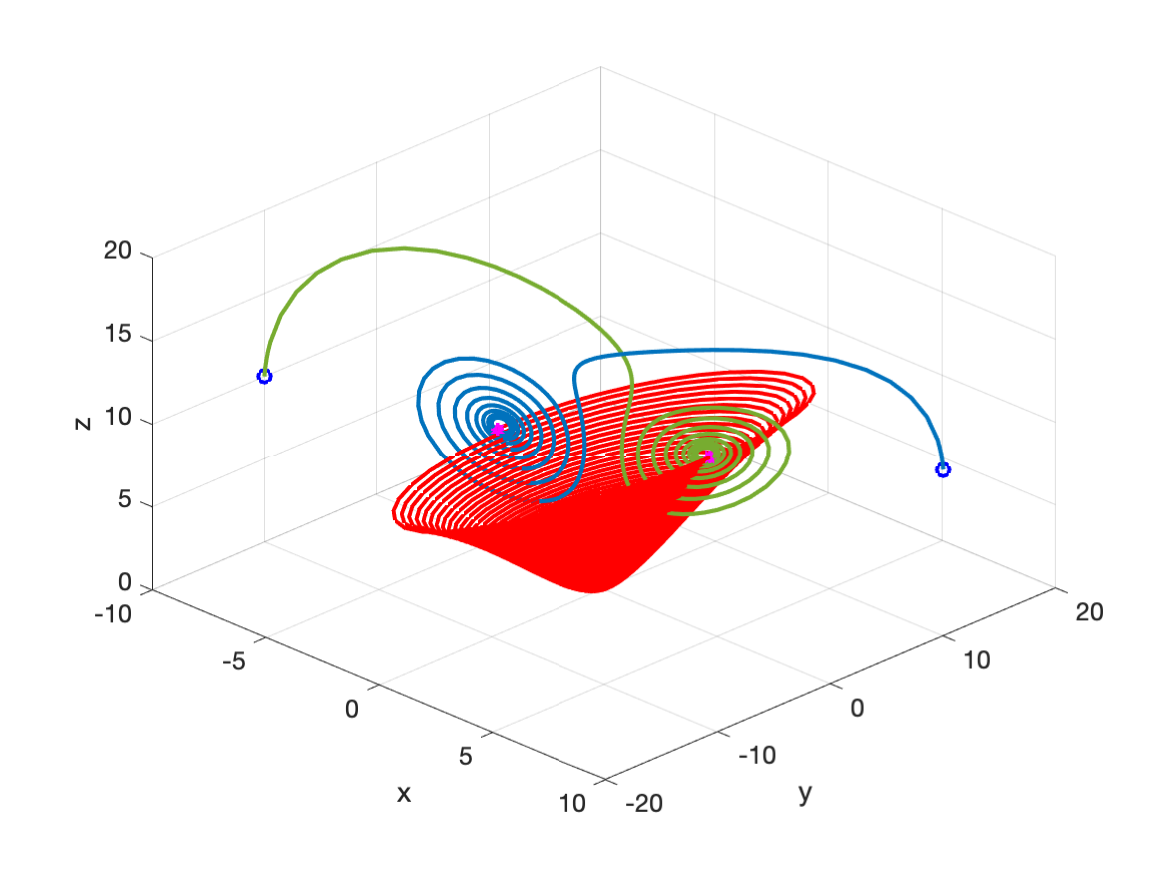}
\includegraphics[height=0.35\textheight, width = 0.45\textwidth]{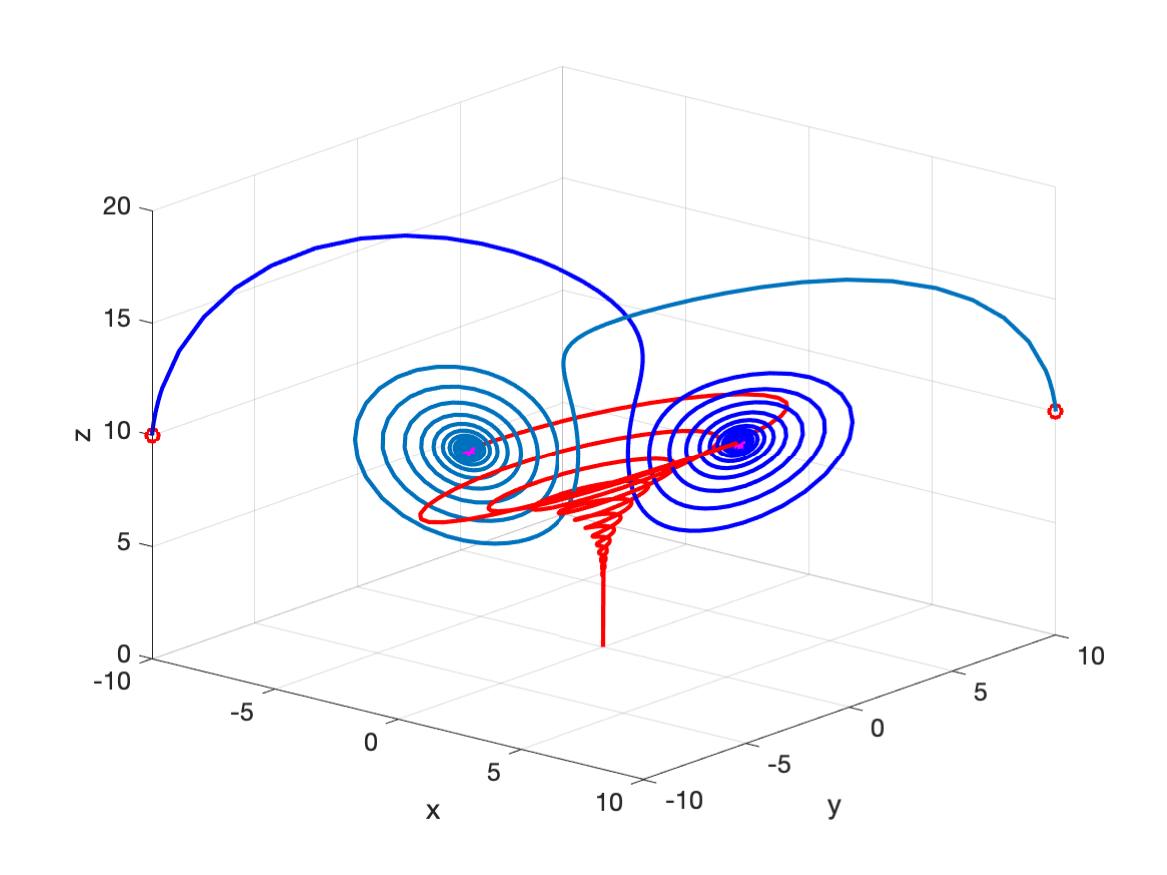}
\includegraphics[height=0.35\textheight, width = 0.45\textwidth]{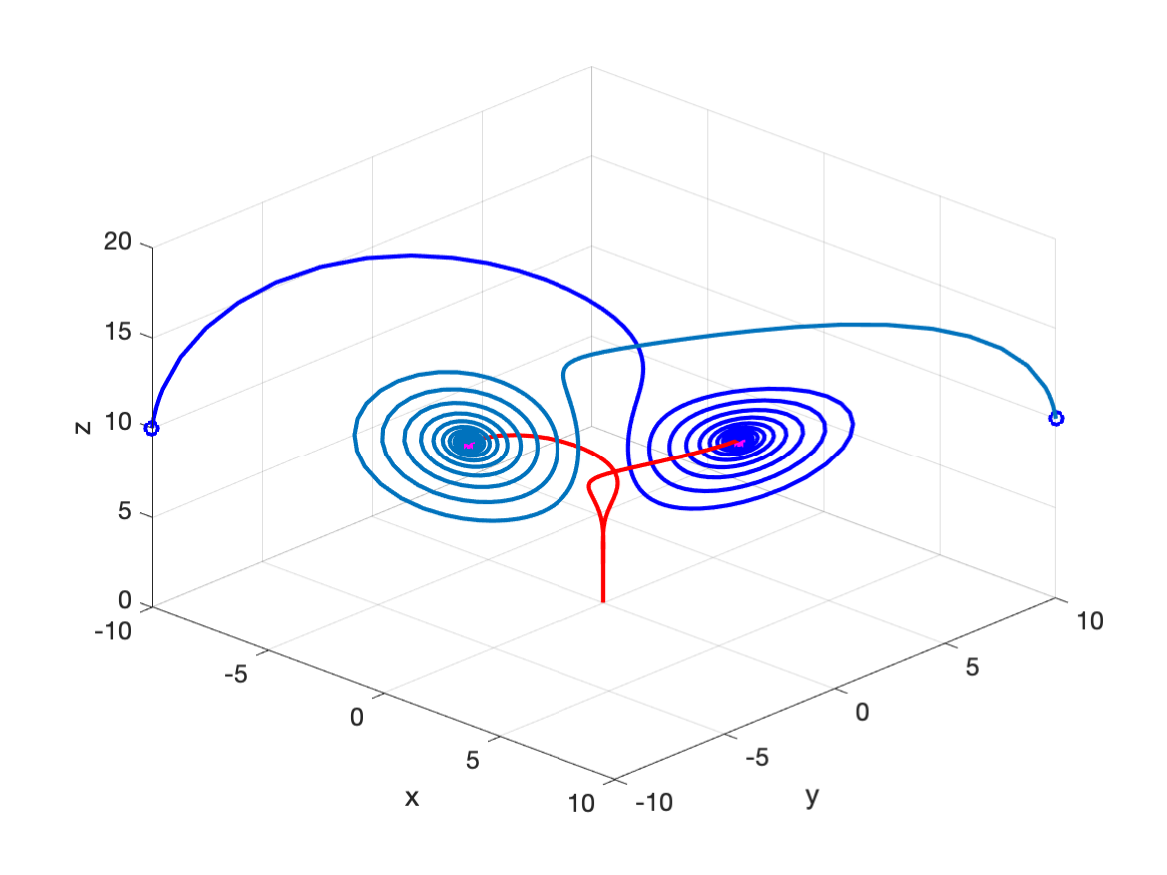}
\includegraphics[height=0.35\textheight,width = 0.45\textwidth]{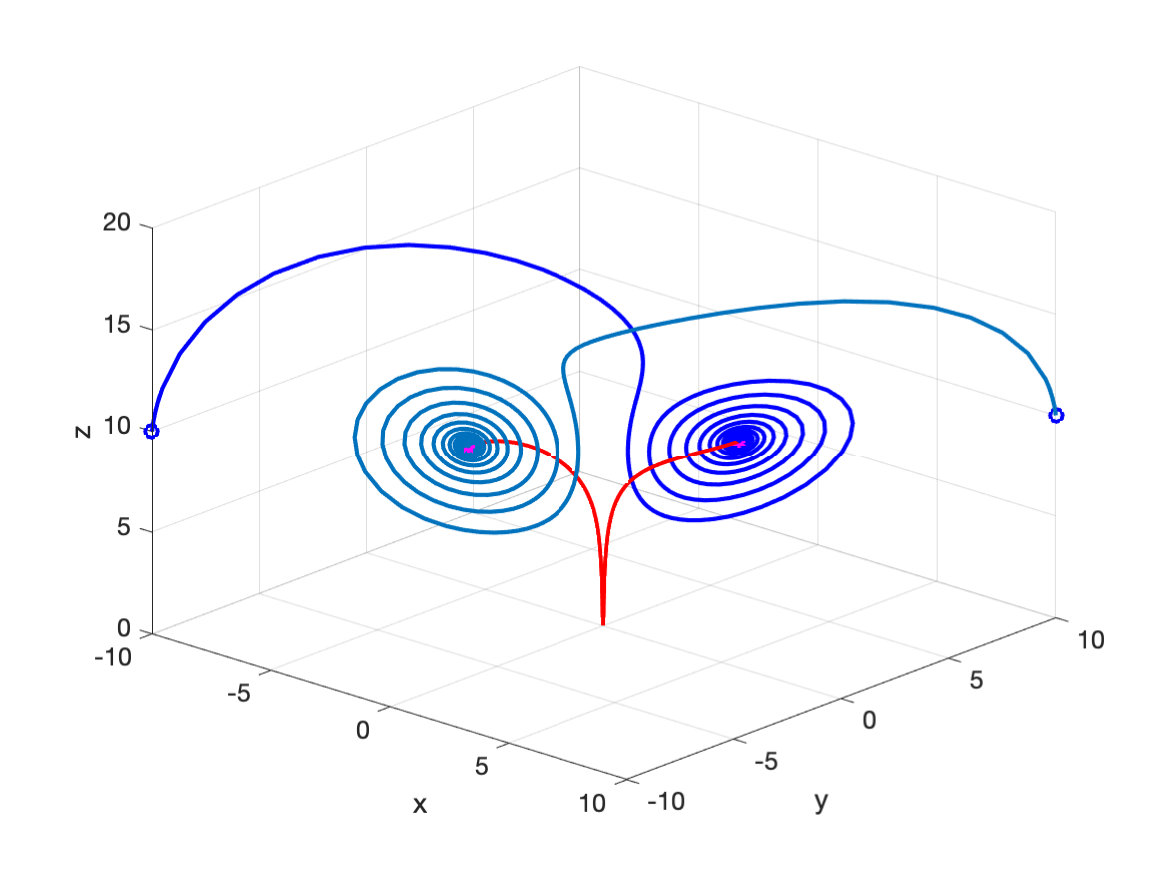}
\caption{Suppression of fixed point attractors using QC approach\label{fig2-sfxy}}Top left: $x_1$-measurement dynamic feedback, Top right: state feedback \\
Bottom left: : $x_1$-measurement static feedback, Bottom right: : $x_2$-measurement static feedback \\ Open loop: blue curve. Closed loop: red curve.  
\end{figure}

\begin{figure}[!htbp]%[H]
\centering
\includegraphics[height=0.35\textheight, width = 0.45\textwidth]{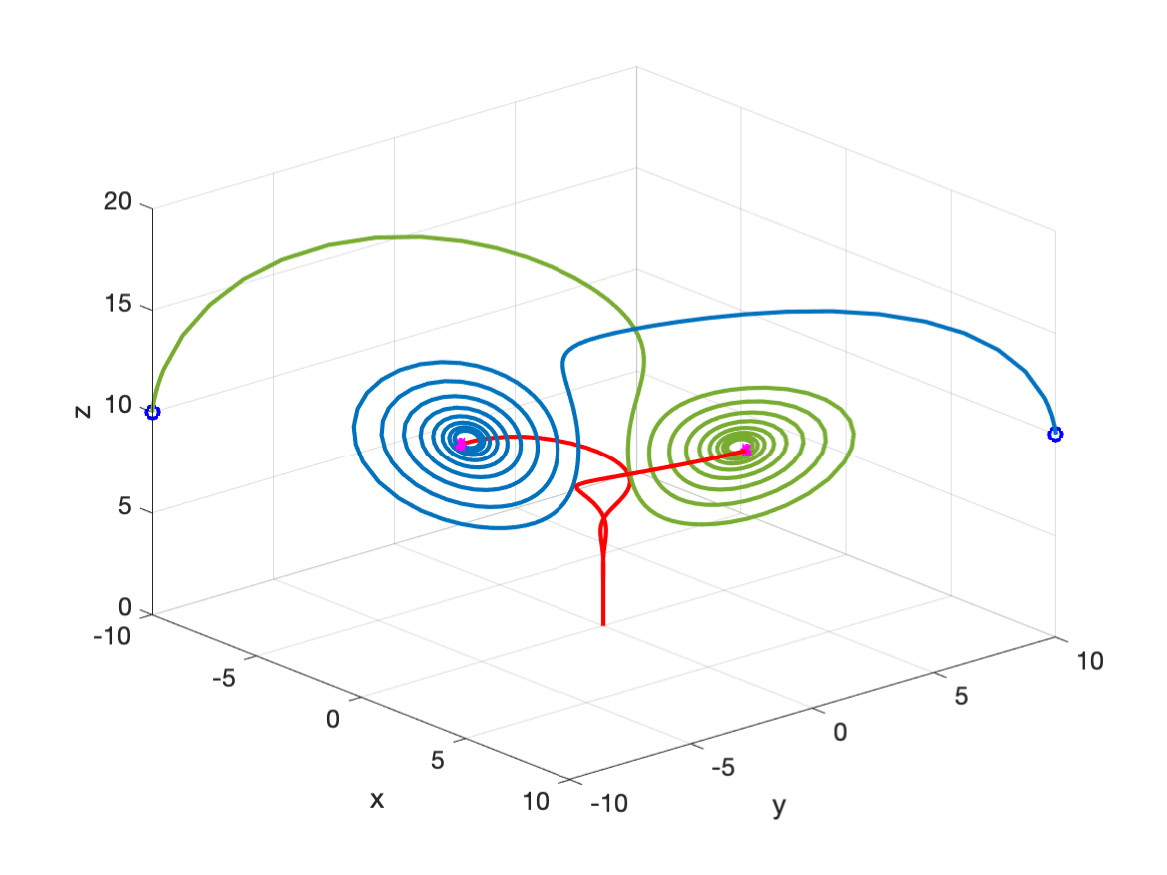}
\includegraphics[height=0.35\textheight, width = 0.45\textwidth]{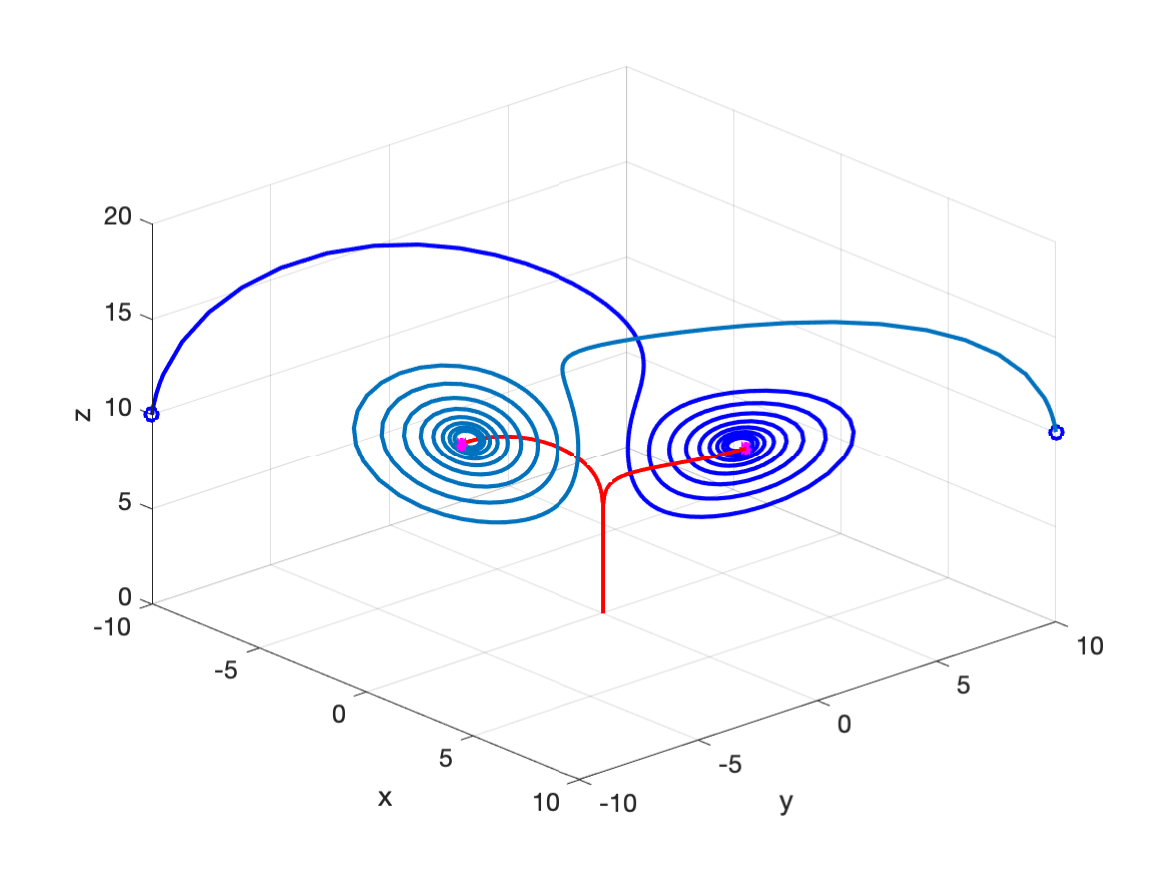} \\
\includegraphics[height=0.35\textheight, width = 0.45\textwidth]{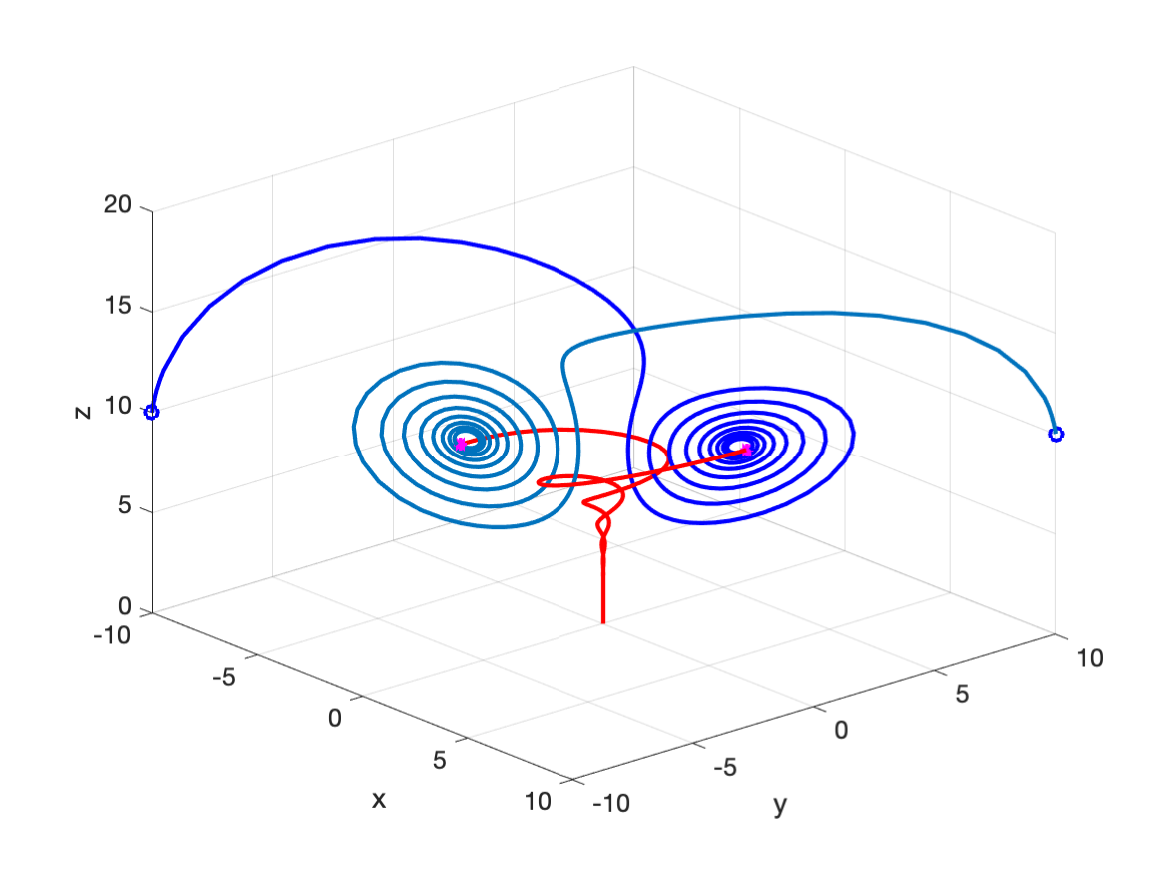}
\includegraphics[height=0.35\textheight,width = 0.45\textwidth]{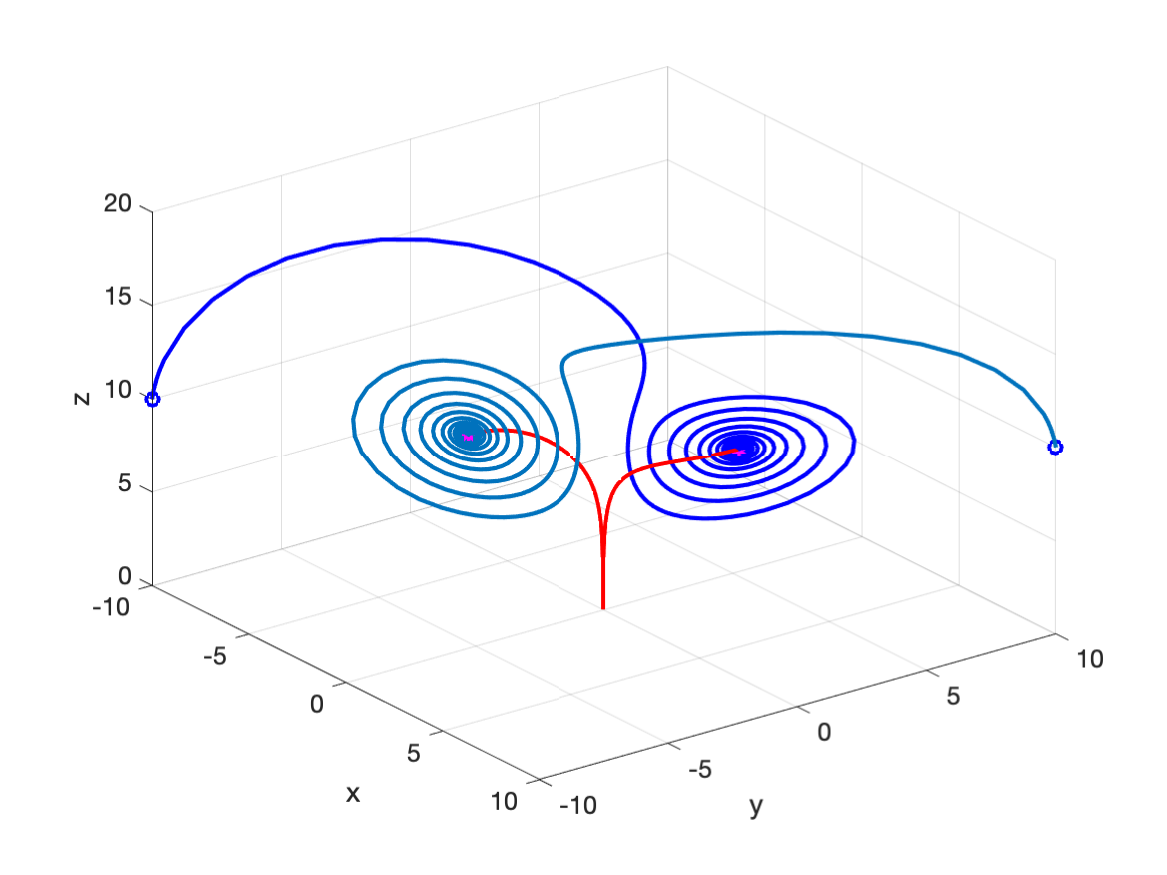}
\caption{Suppression of fixed point attractors using Kreiss norm minimization\label{fig2-sfxyKreiss}} Top left: $x_1$-measurement dynamic  feedback, Top right: state feedback \\
Bottom left: : $x_1$-measurement static feedback, Bottom right: : $x_2$-measurement static feedback \\ Open loop: blue curve, Closed loop: red curve.
\end{figure}

\iffalse  EQUIVALENT TO COMMENTING TEXT 
\subsection{Study of Van der Pol oscillator}
Although details are not shown here, the same type of results were obtained for the Van der Pol model, where
instead of the Lorenz chaotic attractor, the open loop has now a limit cycle, 
which is suppressed by a Kreiss norm controller. 
\begin{figure}[H]
\centering
\includegraphics[height=0.3\textheight]{VanderPol}
\caption{Suppression of limit cycle in Van der Pol model \label{vanderpol1}} using $1$st-order controller based on the Kreiss norm.
\end{figure}
\fi

\section{Conclusion \label{sect-Conclusion}}
The idea to stabilize nonlinear systems in closed loop by mitigating transients of
the linearized closed loop was investigated, the rationale being that large transients are responsible
for driving the 
nonlinear dynamics outside the region of local stability.
Heuristic approaches tailored to transients caused by
noise, persistent perturbations, and finite consumption disturbances were obtained, opening up new possibilities for analysis and control of linear and nonlinear systems. 

The time-domain
worst case transient peak norm $\mathcal M_0(G)$ was identified
as suitable to assess transients caused by $L_1$-disturbances. The Kreiss system norm $\mathcal K(G)$ was introduced and studied as
a frequency domain approximation of $\mathcal M_0(G)$, better suited for the purpose of optimization due to its 
representation as a parametric robust control problem.
In each case the Kreiss-norm objective was effectively combined with other performance and robustness specifications as used in practice, underlining its relevance.
In our numerical testing, Kreiss norm optimization was evaluated by matching it, in small to medium size cases where  possible, with 
a properly extended QC approach.

The Kreiss norm approach is particularly effective for plants with up to several hundred states. 
This concerns the evaluation of Kreiss norm on the one hand, which has polynomial complexity, 
but also feedback design, which is naturally more challenging, but still manageable at such sizes.
Future work
may strive to enable Kreiss norm minimization for large-dimensional plants, 
such as discretizations of realistic fluid flow models or other PDE models. 
While challenging, this may be within reach when  model sparsity is exploited 
and specialized linear algebra is used. In contrast, LMI techniques and SOS certificates such as seen in the application section
are no longer viable options for such huge dimensions.

Over the past two years our optimizer  \cite{an:05,apkarian2006nonsmooth,apkarianNoll2017worst},
available through {\it systune} in \cite{MatlabRobust}, has been used regularly in
industrial applications, see e.g.
\cite{li2023multivariable,lim2022active,Vilarino2022,LeGuehennec2020,Maheshwari2022,deSousa2021,Asghar2023}. A 
specific interest for practitioners
is that it allows parametric robustness in tandem with multi-objective synthesis and controllers of designer-chosen  structure.

\appendix
\renewcommand{\thesection}{\Alph{section}}

\section{}\label{appendix-A}
We consider the closed-loop system (\ref{eq-Brunton1}) in polar coordinates
\begin{align}
\label{start}
\begin{split}
    \dot{r}& =  \sigma r - \alpha \beta r^3 + g K r\sin^2 \phi\\
     \dot{\phi} &= \omega + \alpha \gamma r^2  + g K \cos\phi \sin\phi 
     \end{split}
\end{align} 
First observe that $r(t)$ must be bounded. Indeed, we have $\dot{r} \leq 0$ for
$$
r^2 \geq \frac{\sigma+gK\sin^2\phi}{\alpha \beta}
$$
which due to $K < 0$ means that states $r$ with
$$
r^2 > \frac{\sigma}{\alpha\beta}  =: r_0^2
$$
cannot be reached (from below).
Namely if $r(0) < r_0$, then the trajectory may never reach values $r(t) > r_0$, as this would require derivatives
$\dot{r}>0$ in between $r_0$ and $r(t) > r_0$. Even when $r(0) > r_0$, then $\dot{r} < 0$ on some $[0,\epsilon)$, so the trajectory 
decreases until $r(t) = r_0$ is reached, and then the previous argument shows that it cannot rebounce to values $>r_0$. 
In conclusion, the trajectories of the system are bounded.

Let us look for steady states $(x^*,y^*)$.
In the original $(x,y)$-system we have (with $r^2=x^2+y^2$)
\begin{align*}
    0&= (\sigma-\alpha\beta r^2) x - \omega y - \alpha\gamma r^2 y\\
    0& =\omega x + \alpha\gamma r^2 x +  \sigma y-\alpha\beta r^2 y+gKy
\end{align*}
and this can be written
$$
A(r):=\begin{bmatrix}
    \sigma-\alpha \beta r^2 & -\omega -\alpha\gamma r^2 \\
     \omega+\alpha \gamma r^2 & \sigma- \alpha\beta r^2 + gK
\end{bmatrix}
\begin{bmatrix} x\\y\end{bmatrix}=\begin{bmatrix}0\\0\end{bmatrix}
$$
For this system to have a non-zero solution $(x^*,y^*)\not=(0,0)$, the determinant of the system matrix
$A(r)$
must vanish, which leads to
$$
(\sigma-\alpha\beta r^2)^2 + gK(\sigma-\alpha\beta r^2) + (\omega+\alpha \gamma r^2)^2=0.
$$
This quadratic equation in $\sigma-\alpha \beta r^2$ has no real solution
for $g^2K^2-4(\omega+\alpha\gamma r^2)^2 < 0$, which gives the following
\begin{proposition}
    Suppose $-K < \frac{2\omega}{g}$. Then the only steady state of the closed-loop system is $(0,0)$.
\end{proposition}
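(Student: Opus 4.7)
The plan is to exploit the quadratic structure already derived above the statement. The determinantal condition
$$
(\sigma-\alpha\beta r^2)^2 + gK(\sigma-\alpha\beta r^2) + (\omega+\alpha \gamma r^2)^2=0
$$
is a quadratic equation in the single auxiliary variable $u := \sigma - \alpha\beta r^2$, with coefficient depending on $r$ only through $v(r) := \omega + \alpha\gamma r^2$. The approach is therefore to show that, under the hypothesis $-K < 2\omega/g$, this quadratic in $u$ has no real root for any $r\ge 0$; since a real $r>0$ fixed point would force a real $u$, this rules out nonzero steady states and leaves $(0,0)$ as the only one.

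Concretely, I would carry out three steps. First, rewrite the equation as $u^2 + gK\,u + v(r)^2 = 0$ and compute its discriminant $\Delta(r) = g^2K^2 - 4\,v(r)^2$. Second, invoke the hypothesis: $-K < 2\omega/g$ together with $K<0$ and $g>0$ (from the model data) yields $g^2K^2 < 4\omega^2$. Third, bound $v(r)$ from below by $\omega$, using $\alpha\ge 0$ and $\gamma\ge 0$ (in the model of Section~\ref{sect-brunton} one even has $\gamma=\gamma_u=0$, so $v(r)\equiv \omega$); this gives
$$
\Delta(r) \;\le\; g^2K^2 - 4\omega^2 \;<\; 0\qquad\text{for all }r\ge 0.
$$
Hence no real $u$, and a fortiori no real $r>0$, solves the determinantal equation, so $A(r)$ is invertible for every $r\ge 0$ and the homogeneous system $A(r)(x,y)^\top=0$ admits only the trivial solution. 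Combined with the earlier observation that trajectories are bounded (and together with a linearization argument at the origin to check its asymptotic stability, if global attraction is the target), this yields the claim.

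The main obstacle is not really an obstacle but rather a book-keeping point: one must verify that the sign hypotheses on the model parameters are consistent with the statement. Specifically, the estimate $v(r)\ge \omega$ requires $\alpha\gamma\ge 0$, and the equivalence $-K<2\omega/g \iff g^2K^2<4\omega^2$ uses $K<0$ and $g>0$. These are all satisfied by the data of Section~\ref{sect-brunton}, but they should be stated explicitly (or folded into the hypotheses of the proposition) to make the argument watertight; otherwise the proof goes through unchanged by replacing $\omega$ on the right-hand side of the hypothesis with $\inf_{r\ge 0} v(r)$.
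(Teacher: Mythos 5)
Your proposal is correct and follows essentially the same route as the paper: the paper likewise reduces nonzero steady states to vanishing of $\det A(r)$, views the result as a quadratic in $\sigma-\alpha\beta r^2$, and rules out real roots via the discriminant condition $g^2K^2-4(\omega+\alpha\gamma r^2)^2<0$, which the hypothesis $-K<2\omega/g$ guarantees. Your explicit book-keeping of the sign conditions ($K<0$, $g>0$, $\alpha\gamma\geq 0$, so that $\omega+\alpha\gamma r^2\geq\omega$) is a fair tightening of assumptions the paper leaves implicit in the model data, but it does not change the argument.
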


The origin is locally exponentially stable, so there exists a largest ball $B(0,\rho)$ such that all trajectories
starting in $B(0,\rho)$ converge to $(0,0)$. Suppose $\rho < \infty$, then there exists $(x_0,y_0)\not\in B(0,\rho)$ such that
the trajectory starting at $(x_0,y_0)$ does not enter the ball $B(0,\rho)$. Since it is a bounded trajectory,
the Poincar\'e-Bendixon theorem implies that it must approach a limit cycle. For a limit cycle to exist, the system
must admit a periodic solution.

We therefore look for conditions which allow to exclude the existence of a periodic solution. The Bendixon condition
tells that this is the case when $P_x+Q_y$ does not change sign, where $P,Q$ are the right hand sides
of (\ref{eq-Brunton1}) with the loop $u=Ky$ closed. We get
\begin{align*}
    P_x+Q_y = 2 \sigma - 4 \alpha \beta r^2 + gK
\end{align*}
and this has negative sign for $K < -\frac{2\sigma}{g}$. We conclude the

\begin{proposition}
    Suppose $K \in \mathbb R$ satisfies $K < -\frac{2\sigma}{g}$ and $-K < \frac{2\omega}{g}$. 
    Then {\rm (\ref{eq-Brunton1})}
    is globally stabilized by the static controller $u=Ky$.
\end{proposition}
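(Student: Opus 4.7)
The plan is to combine the three ingredients already assembled in Appendix A via the Poincar\'e--Bendixson trichotomy. Boundedness of trajectories was established before the first proposition: under $K<0$, the inequality $\dot{r}\leq 0$ holds whenever $r^2\geq r_0^2=\sigma/(\alpha\beta)$, so $r(t)\leq\max\{r(0),r_0\}$ and every orbit has compact closure in the $(x,y)$-plane. Consequently, each $\omega$-limit set is nonempty, compact, connected and invariant, and Poincar\'e--Bendixson leaves only three alternatives: (a) a single equilibrium, (b) a periodic orbit, or (c) a union of equilibria joined by homoclinic/heteroclinic connections.

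I would rule out (b) and (c) using the two hypotheses. Alternative (b) is excluded by the Bendixson--Dulac computation already used just above the statement: $P_x+Q_y=2\sigma-4\alpha\beta r^2+gK\leq 2\sigma+gK<0$ whenever $K<-2\sigma/g$. Under the second hypothesis $-K<2\omega/g$, the preceding proposition identifies $(0,0)$ as the unique equilibrium, so alternative (c) could only be a homoclinic loop based at the origin. To discard that possibility I would examine the linearization at $0$, which equals
$$
A+BKC = \begin{bmatrix}\sigma & -\omega\\ \omega & \sigma+gK\end{bmatrix},
$$
whose trace $2\sigma+gK$ is negative by the first hypothesis and whose discriminant $(gK)^2-4\omega^2$ is negative by the second. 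The eigenvalues are therefore a complex conjugate pair with negative real part $\sigma+gK/2<0$, so the origin is a stable focus. A stable focus has no unstable manifold, hence no orbit can return to it and case (c) is impossible.

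The only surviving alternative is (a) with $\omega$-limit equal to $\{(0,0)\}$, which combined with the local exponential stability just obtained yields global asymptotic stability of the origin under the static feedback $u=Ky$. The main delicate point is the exclusion of a homoclinic loop: uniqueness of the equilibrium alone does not suffice, and the fact that the two hypotheses jointly force a \emph{focus} (rather than, say, a saddle) is what closes the argument. The boundedness step and the Bendixson--Dulac step are essentially already done in the excerpt, so the proof really reduces to checking the eigenvalue signs and invoking the planar trichotomy.
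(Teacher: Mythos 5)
Your proposal is correct and follows essentially the same route as the paper: boundedness of $r(t)$ under $K<0$, uniqueness of the equilibrium under $-K<\frac{2\omega}{g}$, exclusion of periodic orbits by Bendixson's criterion under $K<-\frac{2\sigma}{g}$, and the Poincar\'e--Bendixson theorem. The only difference is in bookkeeping: the paper selects a trajectory that never enters the ball $B(0,\rho)$ of guaranteed attraction, so its $\omega$-limit set contains no equilibrium and Poincar\'e--Bendixson forces a limit cycle directly, whereas you invoke the full trichotomy and dispose of the homoclinic alternative by verifying that $A+BKC$ has trace $2\sigma+gK<0$ and discriminant $g^2K^2-4\omega^2<0$, hence is a stable focus --- a computation the paper leaves implicit when it asserts local exponential stability of the origin.
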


\subsection{Dynamic controllers}
Consider the case of  dynamic controllers. Closed-loop dynamics are obtained as follows ($r^2=x^2+y^2$):
$$
\begin{bmatrix} \dot x \\ \dot y \\ \dot x_K \end{bmatrix} = 
\begin{bmatrix}
(\sigma-\alpha \beta r^2) &  -(\omega + \alpha r^2 \gamma)   & 0  \\
(\omega + \alpha \gamma r^2) & (\sigma + g D_K-\alpha \beta r^2) & g C_K \\
0 & B_K  & A_K 
\end{bmatrix}
\begin{bmatrix} x \\ y \\x_K \end{bmatrix}\,.
$$
The equilibrium equations give $x_K = -A_K^{-1} B_K y$, assuming that $A_K$ is invertible. This leads to 
$$
\begin{bmatrix} (\sigma-\alpha \beta r^2) &  (\omega + \alpha r^2 \gamma) \\
(\omega + \alpha \gamma r^2) & (\sigma -\alpha \beta r^2 + g (D_K -C_K A_K^{-1} B_K))
\end{bmatrix} \begin{bmatrix} x\\y\end{bmatrix}=\begin{bmatrix}0\\0\end{bmatrix}\,,
$$
which as before,  has $(0,0)$ as unique solution if and only if the system matrix is invertible. The determinant quadratic equation in $\sigma-\alpha \beta r^2$ has no real solution and is thus non-zero when  
$$ ( g (D_K -C_K A_K^{-1} B_K))^2-4(\omega + \alpha \gamma r^2)^2 < 0\,,$$
which is guaranteed when
$$|D_K -C_K A_K^{-1} B_K| < 2\omega/g  \,.$$
Note the latter involves a  constraint  on the DC gain of the dynamic controller $K(s)= C_K(sI-A_K)^{-1}B_K + D_K$. 

The polar form of these differential equations for $(x,y)$ is obtained as
\begin{align}
\label{polar2}
\begin{split}
    \dot{r}& =  \sigma r - \alpha \beta r^3 + g D_K r\sin^2 \phi + gC_K x_K \sin \phi  \\
     \dot{\phi} &= \omega + \alpha \gamma r^2  + g D_K \cos\phi \sin\phi + g C_K x_K \cos \phi  \\
     \dot{x}_K &= A_K x_K + B_K r \sin \phi
     \end{split}
\end{align} 

Assuming that 
$A_K$ is Hurwitz as is the case for all controllers based on the Kreiss norm, the third equation in (\ref{polar2}) gives us on every finite interval $[0,t_0]$ 
an estimate of the form $\max_{0 \leq t \leq t_0}|x_K(t)| \leq c \max_{0\leq t \leq t_0} r(t)$ for a constant $c >0$
independent of $t_0$.
Indeed, $x_K(t) = \exp(tA_K)x_0+ \int_0^t \exp((s-t)A_K) B_K \sin \phi(s) r(s)ds$, hence from Young's inequality
(with $q=r=\infty$, $p=1$), we get
$$\max_{0 \leq t \leq t_0} |x_K(t)| \leq c_1 + \|B_K\|\| \exp(t A_K)\|_1 \max_{0 \leq t\leq t_0}r(t)\leq c_1 + c_2 \max_{0\leq t\leq t_0} r(t) \leq c \max_{0\leq t \leq t_0} r(t).$$
Therefore by the comparison theorem, (see Lemma \ref{comparison} below), applied to the first equation in (\ref{polar2}),
$r(t)$ 
is bounded above by the solution of the equation $\dot{r} = (\sigma + g|D_K| + g\|C_K\|c)r - \alpha\beta r^3$.
The latter, however, is globally bounded, as the negative term $-\alpha\beta r^{3}$ dominates for large $r>0$.
Having established global boundedness of $r(t)$, we go back into the equation $\dot{x}_K=A_Kx_K+B_K r\sin \phi$, from which we now
derive global boundedness of $x_K$, and so altogether trajectories of (\ref{polar2}) remain bounded.
\begin{lemma}
\label{comparison}
{\rm (See e.g. \cite[Thm. 2.1, p. 93]{zabczyk2020mathematical})}.
    Suppose $\phi$ satisfies $|\phi(t,x)-\phi(t,x')| \leq M|x-x'|$ for all $t\in [t_0,t_1]$ and $x,x'$, and is jointly continuous. Let $v(t)$ be an absolutely continuous function
    such that $\dot{v}(t) \leq \phi(t,v(t))$ for almost all $t\in [t_0,t_1]$. Then
    $v(t) \leq u(t)$ on $[t_0,t_1]$, where $u(t)$ is the solution of $\dot{u}(t)=\phi(t,u(t))$ with initial value $u(t_0)$
 satisfying   $v(t_0)\leq u(t_0)$. \hfill $\square$
\end{lemma}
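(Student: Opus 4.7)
The plan is to reduce to a standard Grönwall-type argument on the difference $w := u - v$. First I would set up $w$: since $v$ is absolutely continuous and $u$ solves the ODE on $[t_0,t_1]$, $w$ is absolutely continuous and we have $w(t_0) = u(t_0) - v(t_0) \geq 0$, together with
\[
\dot{w}(t) \,=\, \dot{u}(t) - \dot{v}(t) \,\geq\, \phi(t,u(t)) - \phi(t,v(t)) \quad \text{for a.e.\ } t\in[t_0,t_1].
\]
Using the Lipschitz hypothesis on $\phi$, the right-hand side is at least $-M|u(t)-v(t)| = -M|w(t)|$. This gives the one-sided differential inequality $\dot{w}(t) \geq -M|w(t)|$ a.e., which is the only information I really need from the dynamics.

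Next I would argue by contradiction. Suppose there exists $t^* \in (t_0,t_1]$ with $w(t^*) < 0$, and set $t_* := \sup\{t\in[t_0,t^*] : w(t)\geq 0\}$. Since $w(t_0)\geq 0$ and $w$ is continuous, $t_*$ is well defined with $w(t_*)=0$ and $w(t) < 0$ on $(t_*,t^*]$. On this interval $|w|=-w$, so the differential inequality simplifies to
\[
\dot{w}(t) \,\geq\, M\, w(t) \quad \text{for a.e.\ } t\in(t_*,t^*].
\]
Multiplying by the integrating factor $e^{-Mt}$ yields $(e^{-Mt}w(t))' \geq 0$ a.e. Since $w$ is absolutely continuous, so is $e^{-Mt}w(t)$, and integrating from $t_*$ to $t^*$ gives $e^{-Mt^*}w(t^*) \geq e^{-Mt_*}w(t_*) = 0$, i.e., $w(t^*)\geq 0$, contradicting $w(t^*)<0$. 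Hence $w(t)\geq 0$ on $[t_0,t_1]$, which is precisely the claim $v(t)\leq u(t)$.

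The main obstacle is the technical bookkeeping rather than any deep idea: one must make sure the Fundamental Theorem of Calculus is applied only to absolutely continuous functions (it is, since $w$ and $e^{-Mt}w$ are both absolutely continuous) and that the a.e.\ inequalities can be integrated. An alternative route would be to perturb the comparison equation to $\dot{u}_\varepsilon = \phi(t,u_\varepsilon)+\varepsilon$ with $u_\varepsilon(t_0)=u(t_0)+\varepsilon$, establish strict inequality $v(t)<u_\varepsilon(t)$ by a simple continuity argument (any first crossing leads to a contradiction with $\dot{u}_\varepsilon-\dot{v}\geq \phi(t,u_\varepsilon)-\phi(t,v)+\varepsilon = \varepsilon>0$ at the crossing), and then pass to the limit $\varepsilon\to 0$ using continuous dependence on initial data and parameters; this avoids the Grönwall step entirely at the cost of invoking standard existence/continuity theorems. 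I would prefer the direct Grönwall argument since it is self-contained and needs no auxiliary existence result.
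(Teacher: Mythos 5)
The paper offers no internal proof of this lemma: it is stated with a pointer to \cite[Thm.~2.1, p.~93]{zabczyk2020mathematical} and a closing square, so your argument is compared against a citation, not a written proof. Your proof is correct and self-contained, and it is the standard one-sided Gr\"onwall argument one would expect behind such a citation. The reduction to $w=u-v$ is sound: $\dot u=\phi(t,u)$ holds exactly while $\dot v\leq\phi(t,v)$ a.e., so $\dot w\geq -M|w|$ a.e.; the set $\{t\in[t_0,t^*]:w(t)\geq 0\}$ is closed by continuity, so $w(t_*)=0$ as you claim; on $(t_*,t^*]$ the inequality becomes $\dot w\geq Mw$, and since $e^{-Mt}w(t)$ is absolutely continuous, its a.e.\ nonnegative derivative does imply monotonicity --- this is precisely where absolute continuity (as opposed to mere a.e.\ differentiability) is needed, and you flag it correctly. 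Two minor points. First, you implicitly use that $u$ exists on all of $[t_0,t_1]$; the global Lipschitz bound guarantees this via Picard--Lindel\"of, and since the lemma's phrasing presupposes it, one sentence would suffice. Second, in your sketched alternative, the claim that a first crossing of $v$ and $u_\varepsilon$ gives a contradiction \emph{at} the crossing is slightly too quick, because $\dot v$ exists only a.e.; the contradiction should instead be extracted in integral form on a short interval to the right of the crossing. Neither point touches your main argument, which would serve as a valid self-contained replacement for the reference.
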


We are now in the situation addressed in \cite[Corollary]{yorke1970theorem}, which says that if a $C^1$-function $V(x)$ can be found satisfying
\begin{equation}
    \label{yorke}
\dot{V}(x) + \ddot{V}(x) \not= 0 \mbox{ for all } x\not=0
\end{equation}
then trajectories either converge $x(t)\to 0$, or escape to infinity $|x(t)|\to \infty$. 
Since we have already ruled out the latter, we
have then a certificate of global asymptotic stability.
For this model, we have used the more restrictive condition $\dot V(x) < 0$, with
$V(x) = V_1(x) + \dot V_2(x)$ and $V_1$, $V_2$ are chosen as multivariate polynomials. See \cite{ahmadi2011higher} for details. The polynomials are then sought using {\it sostools} \cite{sostools}.

For both the $1$st- and $3$rd-order controllers, a solution was obtained with  $V_1$ and $V_2$  sums of  monomials of degree $2$. For the simpler $1$st-order controller, with $x_{cl} = (x,y,x_K)$  this reads 

\begin{equation*}\begin{split}
V_1(x_{cl}) & = 2.556x^2 - 1.389x y - 0.02803 x x_K + 2.897 y^2 - 3.846e\text{-}5y x_K + 0.003159 x_K^2 \\
V_2(x_{cl}) & = - 0.2061 x^2 + 0.008941 x y - 1.324e\text{-}6 x x_K - 0.1787y^2 +
1.641e\text{-}5 y x_K - 0.008169 x_K^2\,.
\end{split}\end{equation*}
We have established that  $x(t)\to 0$.

\section{}\label{appendix-C}
Since the first matrix in (\ref{eq-LMI}) has a zero principal sub-matrix, 
the corresponding row and column terms should be zero for this
matrix to be negative semi-definite. This leads to $ X_{cl} B_{w,cl} + \mu_0  B_{w,cl} = 0 \,
$. Also, the $(1,1)$ sub-matrix should be negative semi-definite.   Using a partitioning in $X_{cl}$ conformable to that of $B_{w,cl}$ in (\ref{eq-CLlorenz}), we have 
$$X_{cl} = \begin{bmatrix}X & X_{12} & X_{13} \\ X_{12}^T & X_{22} & X_{23} \\ X_{13}^T & X_{23}^T & X_{33} \end{bmatrix}, \mbox{ with } X \in \mathbb R^{(n-n_\phi)\times (n-n_\phi)} ,\, X_{22} \in \mathbb R^{n_\phi\times n_\phi},\, X_{33} \in \mathbb R^{n_K\times n_K}  $$
with $n_\phi$ the vector dimension of the nonlinearity $\phi$.
This gives $X_{12}= 0$, $X_{22}=  -\mu_0 I$ and $X_{23}=0$. Due to homogeneity of the problem, 
$\mu_0$ is set to $-1$, and since $X_{22}$ should be positive definite, we get $X_{22} = I$. 
Also, non-strict feasibility can be replaced with strict feasibility by reducing $\epsilon$ if necessary. 
Summing up, assessing global stabilization with a dynamic controller $K(s)$ reduces to a specially structured Lyapunov inequality
\begin{equation}
\label{eq-BMI}
A_{cl}^T X_{cl} + X_{cl}  A_{cl} + \epsilon X_{cl} \prec 0, \;  \;X_{cl} = \begin{bmatrix}X & 0 & X_{13} \\ 0 &  I & 0 \\ X_{13}^T & 0 & X_{33} \end{bmatrix}, X_{cl} \succ 0 \,. 
\end{equation}

This  is  rewritten in the  familiar form: 
\begin{equation}
\label{eq-BMI2}
\Psi + P^T \Theta Q + Q^T \Theta P \prec 0, \;  \;X_{cl} = \begin{bmatrix}X & 0 & X_{13} \\ 0 &  I & 0 \\ X_{13}^T & 0 & X_{33} \end{bmatrix}, X_{cl} \succ 0 \,,
\end{equation}
with appropriate matrices $\Psi$, $P$, $Q$ depending on $X,A,B,C$ and controller data gathered in 
$$\Theta := \begin{bmatrix}A_K & B_K \\ C_K & D_K \end{bmatrix}\,.$$
We can then apply the Projection Lemma \cite{gahinet1994linear} to eliminate $\Theta$, which
leads to LMI solvability conditions. There exist controllers of order $n_K$ if and only if $W_P^T \Psi W_P \prec 
 0$ and  $W_Q^T \Psi W_Q \prec 
 0 $, for some $X_{cl} \succ 0$.  Introducing the inverse of $X_{cl}$ as 
$$Y_{cl}:=X_{cl}^{-1} = \begin{bmatrix}Y & 0 & Y_{13} \\ 0 &  I & 0 \\ Y_{13}^T & 0 & Y_{33} \end{bmatrix}\,, $$ and following \cite{gahinet1994linear}, the two projection inequalities  are computed as 
\begin{equation}\label{proj1}\begin{split}
    N_C^T\left( A^T \begin{bmatrix}X & 0 \\0 & I \end{bmatrix}+  \begin{bmatrix}X & 0 \\0 & I \end{bmatrix} A + \epsilon \begin{bmatrix}X & 0 \\0 & I \end{bmatrix} \right)N_C \prec 0 \\
    N_B^T\left( A \begin{bmatrix}Y & 0 \\0 & I \end{bmatrix}+  \begin{bmatrix}Y & 0 \\0 & I \end{bmatrix} A^T + \epsilon \begin{bmatrix}Y & 0 \\0 & I \end{bmatrix} \right)N_B \prec 0 
\end{split}\end{equation}
where $N_C$ and $N_B$ are bases of the null space of $C$ and $B^T$, respectively. Also, completion of $X_{cl} = Y_{cl}^{-1} \succ 0$ and 
$X_{cl} \in \mathbb R^{(n+n_K)\times (n+n_K)}$ from $X$ and $Y$ is equivalent to \cite{packard1991collection,gahinet1994linear}
\begin{equation}\label{ineqXY}\begin{bmatrix}X & 0 &I & 0 \\0 & I & 0 & I \\I & 0 & Y&0 \\0 & I &0 & I \end{bmatrix} \succeq 0, \; \operatorname{rank} \left(I_{n} - \begin{bmatrix}Y & 0 \\0 & I_{n_\phi} \end{bmatrix} \begin{bmatrix}X & 0 \\0 & I_{n_\phi} \end{bmatrix}\right) \leq n_K\,.\end{equation}

Clearly, the maximal rank is $\operatorname{rank} (I-YX) \leq n-n_\phi$ and determines the controller order. Finally, for $X$ and $Y$ solutions to  (\ref{proj1}) and (\ref{ineqXY}), the full matrix $X_{cl}$ can be reconstructed as well as controller state-space data $(A_K,B_K,C_K,D_K)$ \cite{gahinet1994linear}.

%\bibliographystyle{ieeetr}
%\bibliography{biblio_corr}

\begin{thebibliography}{10}

\bibitem{sussmann1991peaking}
H.~Sussmann and P.~Kokotovic, ``The peaking phenomenon and the global
  stabilization of nonlinear systems,'' {\em IEEE Transactions on Automatic
  Control}, vol.~36, no.~4, pp.~424--440, 1991.

\bibitem{francis1978bounded}
B.~Francis and K.~Glover, ``Bounded peaking in the optimal linear regulator
  with cheap control,'' {\em IEEE Transactions on Automatic Control}, vol.~23,
  no.~4, pp.~608--617, 1978.

\bibitem{lin2022co}
Z.~Lin, ``Co-design of linear low-and-high gain feedback and high gain observer
  for suppression of effects of peaking on semi-global stabilization,'' {\em
  Automatica}, vol.~137, p.~110124, 2022.

\bibitem{Taira2017AIAA_Overview}
K.~Taira, S.~L. Brunton, S.~T.~M. Dawson, C.~W. Rowley, T.~Colonius, B.~J.
  McKeon, O.~T. Schmidt, S.~Gordeyev, V.~Theofilis, and L.~S. Ukeiley, ``Modal
  analysis of fluid flows: An overview,'' {\em AIAA Journal}, vol.~55, no.~12,
  pp.~4013--4041, 2017.

\bibitem{Taira2019AIAA_Applications}
K.~Taira, M.~S. Hemati, S.~L. Brunton, Y.~Sun, K.~Duraisamy, S.~Bagheri,
  S.~T.~M. Dawson, and C.-A. Yeh, ``Modal analysis of fluid flows: Applications
  and outlook,'' {\em AIAA Journal}, vol.~58, no.~3, pp.~998--1022, 2020.

\bibitem{an_kreiss}
P.~Apkarian and D.~Noll, ``Optimizing the {K}reiss constant,'' {\em SIAM
  Journal on Control and Optimization}, vol.~58, no.~6, pp.~3342--3362, 2020.

\bibitem{hinrichsen}
D.~Hinrichsen and A.~Pritchard, ``On the transient behaviour of stable linear
  systems,'' in {\em Proc. 14th International Symposium of Mathematical Theory
  of Networks and Systems (MTNS 2000), Perpignan}, pp.~19--23, 2000.

\bibitem{krakov}
H.~Krakovska, C.~Kuehn, and I.~Longo, ``Resilience of dynamical systems,'' {\em
  European Journal of Applied Math.}, vol.~35, no.~1, pp.~1--46, 2023.

\bibitem{ghanbari24}
M.~Ghanbari and J.~Jiang, ``Resilience metrics in power and control systems:
  Comparative analysis,'' in {\em IFAC Papers Online}, vol.~58, pp.~699--704,
  2024.

\bibitem{demmer23}
T.~Demmer, J.~Kahlen, and D.~Lichte, ``The use of control theory to enhance
  systems towards resiliance,'' in {\em Proceedings of the 33rd European Safety
  and Reliability Conference}, pp.~1242--1249, ESREL2023, Singapore, 2023.

\bibitem{bouvier23}
J.-B. Bouvier and M.~Ornik, ``Resilience of linear systems to partial loss of
  control authority,'' {\em Automatica}, vol.~152, no.~june 23, p.~110985,
  2023.

\bibitem{boerner21}
J.~B\"orner and F.~Steinke, ``Measuring {LTI} system resiliance against
  adversarial disturbances based on efficient eigenvalue computation,'' in {\em
  60th CDC21}, 2021.

\bibitem{Packard2003b}
A.~Packard and P.~Seiler, ``{IQC}s and {LMI}s for analysis and synthesis of
  uncertain systems,'' {\em IEEE Transactions on Automatic Control}, vol.~48,
  no.~7, pp.~1127--1143, 2003.

\bibitem{khong_24}
S.~Khong and A.~Lanzon, ``Connections between integral quadratic constraints
  and dissipativity,'' {\em IEEE Trans. Autom. Contr.}, vol.~69, no.~8,
  pp.~5672--5677, 2024.

\bibitem{khong25}
S.~Khong, C.~Chen, and A.~Lanzon, ``Feedback stability analysis via
  dissipativity with dynamic supply rates,'' {\em Automatica}, vol.~172,
  p.~112000, 2025.

\bibitem{Veenmann2013}
J.~Veenmann and C.~Scherer, ``Stability analysis with integral quadratic
  constraints: A dissipativity based proof,'' in {\em 2013 IEEE 52nd Annual
  Conference on Decision and Control (CDC)}, (Florence, Italy), pp.~6289--6294,
  Dec 2013.

\bibitem{seiler_2015}
P.~Seiler, ``Stability analysis with dissipation inequalities and integral
  quadratic constraints,'' {\em IEEE Trans. Autom. Control.}, vol.~60, no.~6,
  pp.~1704--1709, 2015.

\bibitem{gahinet20}
M.~Xia, P.~Gahinet, N.~Abroug, C.~Buhr, and E.~Laroche, ``Sector bounds in
  stability analysis and control design,'' {\em Int. J. Rob. Nonlin. Control},
  vol.~30, pp.~7857--7882, 2020.

\bibitem{simoes20}
V.~Cavalcanti and A.~Sim$\tilde{\rm o}$es, ``{IQC}-synthesis under structural
  constraints,'' {\em Int. J. Robust Nonlin. Control}, vol.~30, pp.~4880--4905,
  2020.

\bibitem{an_iqc}
P.~Apkarian and D.~Noll, ``{IQC} analysis and synthesis via nonsmooth
  optimization,'' {\em Systems and Control Letters}, vol.~55, no.~12,
  pp.~971--981, 2006.

\bibitem{kalur2021nonlinear}
A.~Kalur, P.~Seiler, and M.~S. Hemati, ``Nonlinear stability analysis of
  transitional flows using quadratic constraints,'' {\em Physical Review
  Fluids}, vol.~6, no.~4, p.~044401, 2021.

\bibitem{mushtaq2022feedback}
T.~Mushtaq, P.~J. Seiler, and M.~Hemati, ``Feedback stabilization of
  incompressible flows using quadratic constraints,'' in {\em AIAA AVIATION
  2022 Forum}, p.~3773, 2022.

\bibitem{astolfi22}
D.~Astolfi, L.~Marcoui, and A.~Teel, ``Low-power peaking-free high-gain
  observers for non-linear systems,'' 2022.

\bibitem{whidborne2007minimization}
J.~F. Whidborne and J.~McKernan, ``On the minimization of maximum transient
  energy growth,'' {\em IEEE Transactions on Automatic Control}, vol.~52,
  no.~9, pp.~1762--1767, 2007.

\bibitem{MQMcW2011}
F.~Martinelli, M.~Quadrio, J.~McKernan, and J.~F. Whidborne, ``Linear feedback
  control of transient energy growth and control performance limitations in
  subcritical plane poiseuille flow,'' {\em Phys. Fluids}, vol.~23, no.~1,
  p.~014103, 2011.

\bibitem{ray21}
A.~Ray, A.~Pal, D.~Ghosh, S.~Dana, and C.~Hens, ``Mitigating long transient
  time in deterministic systems by resetting,'' {\em Chaos}, vol.~31,
  p.~011103, 2021.

\bibitem{boyd1994linear}
S.~Boyd, L.~El~Ghaoui, E.~Feron, and V.~Balakrishnan, {\em Linear matrix
  inequalities in system and control theory}.
\newblock SIAM, 1994.

\bibitem{whidborne2005minimization}
J.~F. Whidborne, J.~McKernan, and A.~J. Steer, ``Minimization of maximum
  transient energy growth by output feedback,'' {\em IFAC Proceedings Volumes},
  vol.~38, no.~1, pp.~283--288, 2005.

\bibitem{quenon2021control}
P.~Qu{\'e}non and J.~F. Whidborne, ``Control of plane {P}oiseuille flow using
  the {K}reiss constant,'' in {\em International Conference Cyber-Physical
  Systems and Control}, pp.~41--51, Springer, 2021.

\bibitem{induced_norms}
V.~Chellaboina, W.~M. Haddad, D.~S. Bernstein, and D.~A. Wilson, ``Induced
  convolution operator norms of linear dynamical systems,'' {\em Mathematics of
  Control, Signals and Systems}, vol.~13, pp.~216--239, 2000.

\bibitem{clarke1990optimization}
F.~H. Clarke, {\em Optimization and nonsmooth analysis}.
\newblock SIAM, 1990.

\bibitem{leveque}
R.~J. LeVeque and L.~N. Trefethen, ``On the resolvent condition in the {K}reiss
  matrix theorem,'' {\em BIT Numerical Mathematics}, vol.~24, no.~4,
  pp.~584--591, 1984.

\bibitem{mitchell1}
T.~Mitchell, ``Computing the {K}reiss constant of a matrix,'' {\em SIAM Journal
  on Matrix Analysis and Applications}, vol.~41, no.~4, pp.~1944--1975, 2020.

\bibitem{mitchell2}
T.~Mitchell, ``Fast interpolation-based globality certificates for computing
  {K}reiss constants and the distance to uncontrollability,'' {\em SIAM Journal
  on Matrix Analysis and Applications}, vol.~42, no.~2, pp.~578--607, 2021.

\bibitem{trefethen_embree}
L.~N. Trefethen and M.~Embree, {\em Spectra and pseudospectra, the behavior of
  nonnormal matrices and operators}.
\newblock Princeton University Press, 2005.

\bibitem{Krakovska2024Resilience}
H.~Krakovsk{\'a}, C.~K\"uhn, and I.~Longo, ``Resilience of dynamical systems,''
  {\em European Journal of Applied Mathematics}, 2024.

\bibitem{LeeMarcus2023JFM}
S.~Lee and P.~S. Marcus, ``Linear stability analysis of wake vortices by a
  spectral method using mapped legendre functions,'' {\em Journal of Fluid
  Mechanics}, vol.~967, p.~A2, 2023.

\bibitem{ShcherbakovDabbene2022EJC}
P.~S. Shcherbakov and F.~Dabbene, ``A probabilistic point of view on peak
  effects in linear difference equations,'' {\em European Journal of Control},
  vol.~63, pp.~107--115, 2022.

\bibitem{GarciaHilares2023PhD}
N.~A. Garc{\'i}a~Hilares, {\em Mathematical Modeling and Dynamic Recovery of
  Power Systems}.
\newblock PhD thesis, Virginia Polytechnic Institute and State University,
  Blacksburg, VA, USA, 2023.

\bibitem{Lee2024Thesis}
S.~Lee, ``Linear stability of a wake vortex and its transient growth,''
  Master's thesis, University of California, Berkeley, 2024.

\bibitem{DudarenkoEtAl2023IA}
N.~Dudarenko, N.~Vunder, V.~Melnikov, and A.~Zhilenkov, ``Minimization of peak
  effect in the free motion of linear systems with restricted control,'' {\em
  Informatics and Automation}, vol.~22, no.~3, 2023.

\bibitem{ApkarianNoll2021_OptBasedControl}
P.~Apkarian and D.~Noll, ``Optimization‐based control design techniques and
  tools,'' in {\em Encyclopedia of Systems and Control} (J.~Baillieul and
  T.~Samad, eds.), pp.~1626--1637, Springer, Cham, 2021.

\bibitem{AN2015}
P.~Apkarian, M.~N. Dao, and D.~Noll, ``Parametric robust structured control
  design,'' {\em Automatic Control, IEEE Transactions on}, vol.~60, no.~7,
  pp.~1857--1869, 2015.

\bibitem{apkarianNoll2017worst}
P.~Apkarian and D.~Noll, ``Worst-case stability and performance with mixed
  parametric and dynamic uncertainties,'' {\em International Journal of Robust
  and Nonlinear Control}, vol.~27, no.~8, pp.~1284--1301, 2017.

\bibitem{apkarian2006nonsmooth}
P.~Apkarian and D.~Noll, ``Nonsmooth {$H_\infty$} synthesis,'' {\em IEEE
  Transactions on Automatic Control}, vol.~51, no.~1, pp.~71--86, 2006.

\bibitem{apkarian2006nonsmooth2}
P.~Apkarian and D.~Noll, ``Nonsmooth optimization for multidisk {$H_\infty$}
  synthesis,'' {\em European Journal of Control}, vol.~12, no.~3, pp.~229--244,
  2006.

\bibitem{best_young}
H.~J. Brascamp and E.~H. Lieb, ``Best constants in {Y}oung's inequality, its
  converse, and its generalization to more than three functions,'' {\em
  Advances in Mathematics}, vol.~20, no.~2, pp.~151--173, 1976.

\bibitem{boyd_barratt}
S.~Boyd and C.~Barratt, {\em Linear Controller Design: {L}imits of
  Performance}.
\newblock Prentice-Hall, 1991.

\bibitem{spijker}
M.~Spijker, ``On a conjecture by {L}eveque and {T}refethen related to the
  {K}reiss matrix theorem,'' {\em BIT Numerical Mathematics}, vol.~31,
  pp.~551--555, 1991.

\bibitem{kreiss_himself}
H.-O. Kreiss, ``{\"U}ber die {S}tabilit{\"a}tsdefinition f{\"u}r
  {D}ifferenzengleichungen die partielle {D}ifferentialgleich\-ungen
  approximieren,'' {\em BIT Numerical Mathematics}, vol.~2, pp.~153--181, 1962.

\bibitem{engel2000one}
K.-J. Engel, R.~Nagel, and S.~Brendle, {\em One-parameter semigroups for linear
  evolution equations}, vol.~194.
\newblock Springer, 2000.

\bibitem{swaroop}
D.~Swaroop and D.~Neimann, ``On the impulse response of {LTI} systems,'' in
  {\em Proceedings of the 2001 American Control Conference.(Cat. No.
  01CH37148)}, vol.~1, pp.~523--528, IEEE, 2001.

\bibitem{trefethen1993hydrodynamic}
L.~N. Trefethen, A.~E. Trefethen, S.~C. Reddy, and T.~A. Driscoll,
  ``Hydrodynamic stability without eigenvalues,'' {\em Science}, vol.~261,
  no.~5121, pp.~578--584, 1993.

\bibitem{hinrichsen2000transient}
D.~Hinrichsen and A.~Pritchard, ``On the transient behaviour of stable linear
  systems,'' {\em Proc. Int. Symp. Math. Theory Networks \& Syst. Perpignan,
  France. CDROM - paper B218.}, vol.~2, p.~2, 2000.

\bibitem{schmid2014analysis}
P.~J. Schmid and L.~Brandt, ``Analysis of fluid systems: Stability,
  receptivity, sensitivity. {L}ecture {N}otes from the flow-nordita summer
  school on advanced instability methods for complex flows, {S}tockholm,
  {S}weden, 2013,'' {\em Applied Mechanics Reviews}, vol.~66, no.~2, p.~024803,
  2014.

\bibitem{an:05}
P.~Apkarian and D.~Noll, ``Controller design via nonsmooth multi-directional
  search,'' {\em SIAM J. on Control and Optimization}, vol.~44, no.~6,
  pp.~1923--1949, 2006.

\bibitem{AN:2007}
P.~Apkarian and D.~Noll, ``Nonsmooth optimization for multiband frequency
  domain control design,'' {\em Automatica}, vol.~43, no.~4, pp.~724 -- 731,
  2007.

\bibitem{MatlabRobust}
``Robust control toolbox 6.11,'' 2021.
\newblock The MathWorks, Natick, MA, USA.

\bibitem{boyd_doyle}
S.~Boyd and J.~Doyle, ``Comparison of peak and {RMS} gains for discrete-time
  systems,'' {\em Systems \& Control Letters}, vol.~9, no.~1, pp.~1--6, 1987.

\bibitem{mil}
J.~Doyle and C.~C. Chu, ``Robust control of multivariable and large scale
  systems,'' Tech. Rep. AD-A175 058, Honeywell Systems and Research center,
  1986.

\bibitem{mixedApkarianNoll}
P.~Apkarian and D.~Noll, ``Mixed {$L_1/H_\infty$}-synthesis for
  {$L_\infty$}-stability,'' {\em International Journal of Robust and Nonlinear
  Control}, vol.~32, no.~4, pp.~2119--2142, 2022.

\bibitem{dao}
M.~Dao and D.~Noll, ``Minimizing the memory of a system,'' {\em Mathematics of
  Control, Signals and Systems}, vol.~27, no.~1, pp.~77--110, 2015.

\bibitem{brunton2015closed}
S.~L. Brunton and B.~R. Noack, ``Closed-loop turbulence control: Progress and
  challenges,'' {\em Applied Mechanics Reviews}, vol.~67, no.~5, 2015.

\bibitem{leclercq2019linear}
C.~Leclercq, F.~Demourant, C.~Poussot-Vassal, and D.~Sipp, ``Linear iterative
  method for closed-loop control of quasiperiodic flows,'' {\em Journal of
  Fluid Mechanics}, vol.~868, pp.~26--65, 2019.

\bibitem{illingworth_morgans_rowley_2012}
S.~J. Illingworth, A.~S. Morgans, and C.~W. Rowley, ``Feedback control of
  cavity flow oscillations using simple linear models,'' {\em Journal of Fluid
  Mechanics}, vol.~709, p.~223–248, 2012.

\bibitem{ZDG:96}
K.~Zhou, J.~C. Doyle, and K.~Glover, {\em {Robust and Optimal Control}}.
\newblock Prentice Hall, 1996.

\bibitem{lorenz1963deterministic}
E.~N. Lorenz, ``Deterministic nonperiodic flow,'' {\em Journal of {A}tmospheric
  {S}ciences}, vol.~20, no.~2, pp.~130--141, 1963.

\bibitem{liu2020input}
C.~Liu and D.~F. Gayme, ``Input-output inspired method for permissible
  perturbation amplitude of transitional wall-bounded shear flows,'' {\em
  Physical Review E}, vol.~102, no.~6, p.~063108, 2020.

\bibitem{gahinet1994linear}
P.~Gahinet and P.~Apkarian, ``A linear matrix inequality approach to
  {$H_\infty$} control,'' {\em International Journal of Robust and Nonlinear
  Control}, vol.~4, no.~4, pp.~421--448, 1994.

\bibitem{sturm1999using}
J.~F. Sturm, ``Using sedumi 1.02, a matlab toolbox for optimization over
  symmetric cones,'' {\em Optimization methods and software}, vol.~11, no.~1-4,
  pp.~625--653, 1999.

\bibitem{li2023multivariable}
J.~S. Li, Y.~L. Zhou, S.~B. Shi, and H.~Q. Liu, ``Multivariable control of a
  supercritical boiler with load regulation using loop shaping and {LMI}-based
  robust control,'' {\em IEEE Access}, vol.~11, pp.~17822--17834, 2023.

\bibitem{lim2022active}
D.~H. Lim, S.~H. Lee, and J.~Y. Kim, ``Active vibration control of a high-speed
  railway pantograph using {$H_\infty$} and systune methods,'' {\em Journal of
  Mechanical Science and Technology}, vol.~36, no.~10, pp.~4773--4781, 2022.

\bibitem{Vilarino2022}
D.~L. Vilarino, F.~B. G.~A. de~Sousa, and E.~H.~C. Junior, ``Multivariable
  control of a wind turbine to minimize loads using the
  {$H_\infty$}/{$\mu$}-synthesis and the systune functions,'' {\em Journal of
  Control, Automation and Electrical Systems}, vol.~33, no.~5, pp.~1117--1129,
  2022.

\bibitem{LeGuehennec2020}
G.~Le~Guehennec, J.~Verron, and N.~Petit, ``Robust {$H_\infty$} control design
  for a satellite attitude control system,'' {\em IFAC-PapersOnLine}, vol.~53,
  no.~2, pp.~11175--11181, 2020.

\bibitem{Maheshwari2022}
R.~Maheshwari, A.~Kumar, and S.~Sharma, ``Robust flight control design for a
  launch vehicle using structured {$H_\infty$} synthesis,'' {\em Journal of
  Aerospace Engineering}, vol.~35, no.~2, p.~04021110, 2022.

\bibitem{deSousa2021}
F.~B. G.~A. de~Sousa, M.~R.~V. de~Castro, and M.~W.~S. Maudsley, ``Robust
  control law for a european launcher in the ascent phase,'' {\em Journal of
  Aerospace Engineering}, vol.~34, no.~4, p.~04021029, 2021.

\bibitem{Asghar2023}
M.~I. Asghar, S.~S. Kim, and I.~H. Lim, ``Multivariable robust control of a
  large wind turbine using the {$H_\infty$} synthesis technique,'' {\em Journal
  of Control, Automation and Systems}, vol.~21, no.~10, pp.~3129--3138, 2023.

\bibitem{zabczyk2020mathematical}
J.~Zabczyk, {\em Mathematical {C}ontrol {T}heory: an introduction}.
\newblock Birkh\"auser, Boston, 2008.

\bibitem{yorke1970theorem}
J.~A. Yorke, ``A theorem on {L}iapunov functions using {$\ddot V$},'' {\em
  Mathematical Systems Theory}, vol.~4, no.~1, pp.~40--45, 1970.

\bibitem{ahmadi2011higher}
A.~A. Ahmadi and P.~A. Parrilo, ``On higher order derivatives of {L}yapunov
  functions,'' in {\em Proceedings of the 2011 American Control Conference},
  pp.~1313--1314, IEEE, 2011.

\bibitem{sostools}
A.~Papachristodoulou, J.~Anderson, G.~Valmorbida, S.~Prajna, P.~Seiler, P.~A.
  Parrilo, M.~M. Peet, and D.~Jagt, {\em {SOSTOOLS}: Sum of squares
  optimization toolbox for {MATLAB}}.
\newblock \texttt{http://arxiv.org/abs/1310.4716}, 2021.
\newblock Available from \texttt{https://github.com/oxfordcontrol/SOSTOOLS}.

\bibitem{packard1991collection}
A.~Packard, K.~Zhou, P.~Pandey, and G.~Becker, ``A collection of robust control
  problems leading to {LMIs},'' in {\em [1991] Proceedings of the 30th IEEE
  Conference on Decision and Control}, pp.~1245--1250, IEEE, 1991.

\end{thebibliography}

\end{document}